\documentclass[a4paper,11pt]{article}
\pdfoutput=1


\usepackage[a4paper,tmargin=3truecm,bmargin=3truecm,rmargin=2.5truecm,
lmargin=2.5truecm,twoside,verbose=true]{geometry}
\usepackage{cancel,graphicx}

\usepackage{amsmath,amssymb}
\usepackage[amsmath, hyperref, thmmarks]{ntheorem}
\usepackage[all]{xypic}
\usepackage[pdftex]{hyperref}
\usepackage[english]{babel}

\numberwithin{equation}{section}

\allowdisplaybreaks[1]

\newcommand\defin{\bf}

\newcommand\caD{{\mathcal D}}
\newcommand\caE{{\mathcal E}}
\newcommand\caF{{\mathcal F}}

\newcommand\caL{{\mathcal L}}
\newcommand\caP{{\mathcal P}}

\newcommand\caR{{\mathcal R}}

\newcommand\caM{{\mathcal M}}
\newcommand\caS{{\mathcal S}}
\newcommand\caO{{\mathcal O}}
\newcommand\caU{{\mathcal U}}

\newcommand\gone{{ \mathchoice {1\mskip-4mu\mathrm{l} } {1\mskip-4mu\mathrm{l} }{1\mskip-4.5mu\mathrm{l} } {1\mskip-5mu\mathrm{l}} }}
\newcommand\gR{{\mathbb R}}

\newcommand\gC{{\mathbb C}}

\newcommand\R{{\mathbb R}}

\newcommand\gS{{\mathbb S}}
\newcommand\gN{{\mathbb N}}
\newcommand\gZ{{\mathbb Z}}

\newcommand\bfm{{\mathsf{\mathbf m}}}

\newcommand\algzero{{\mathsf 0}}

\newcommand\algA{{\mathbf A}}

\newcommand\ehH{\mathcal H}

\newcommand\ka{{\mathfrak a}}
\renewcommand\k{{\mathfrak k}}
\newcommand\n{{\mathfrak n}}
\newcommand\z{{\mathfrak z}}
\renewcommand\a{{\mathfrak a}}
\renewcommand\b{{\mathfrak g}}
\newcommand\g{{\mathfrak G}}
\newcommand\kB{{\mathfrak B}}

\newcommand\kq{{\mathfrak q}}
\newcommand\kg{{\mathfrak g}}

\newcommand\kh{{\mathfrak h}}

\newcommand\ks{{\mathfrak s}}

\newcommand\ee{{\epsilon}}
\newcommand\ad{{\text{\textup{ad}}}}
\newcommand\Ad{{\text{\textup{Ad}}}}

\newcommand\fois{\mathord{\cdot}}
\DeclareMathOperator{\tr}{Tr} 

\DeclareMathOperator{\Aut}{\mathsf{Aut}}

\newcommand\Der{{\text{\textup{Der}}}}

\newcommand\dd{{\text{\textup{d}}}}


\newcommand\norm{\mathord{\parallel}}

\theoremsymbol{}
\theorembodyfont{\slshape}
\theoremheaderfont{\normalfont\bfseries}
\theoremseparator{}
\newtheorem{Theorem}{Theorem}[section]
\newtheorem{theorem}[Theorem]{Theorem}

\newtheorem{proposition}[Theorem]{Proposition}
\newtheorem{Lemma}[Theorem]{Lemma}
\newtheorem{lemma}[Theorem]{Lemma}

\newtheorem{corollary}[Theorem]{Corollary}

\theorembodyfont{\upshape}
\theoremsymbol{\ensuremath{\blacklozenge}}

\newtheorem{example}[Theorem]{Example}

\newtheorem{remark}[Theorem]{Remark}

\newtheorem{definition}[Theorem]{Definition}

\theoremstyle{nonumberplain}
\theoremheaderfont{\scshape}
\theorembodyfont{\normalfont}
\theoremsymbol{\ensuremath{\blacksquare}}

\newtheorem{proof}{Proof}
\qedsymbol{\ensuremath{_\blacksquare}}
\theoremclass{LaTeX}


\renewenvironment{thebibliography}[1]
         {\section*{References}\frenchspacing\small
          \begin{list}{[\arabic{enumi}]}
         {\usecounter{enumi}\parsep=2pt\topsep 0pt
         \settowidth{\labelwidth}{[#1]}
         \leftmargin=\labelwidth\advance\leftmargin\labelsep
         \rightmargin=0pt\itemsep=1pt\sloppy}}{\end{list}}


\title{Harmonic analysis on homogeneous complex bounded domains and noncommutative geometry\footnote{Work
supported by the Belgian Interuniversity Attraction Pole (IAP) within the framework ``Dynamics, Geometry and Statistical Physics'' (DYGEST).}}
\author{}
\author{Pierre Bieliavsky$^1$, Victor Gayral$^2$, Axel de Goursac$^1$, Florian Spinnler$^1$}

\begin{document}

\maketitle
\vspace*{-1cm}
\begin{center}
\textit{1: D\'epartement de Math\'ematiques, \\ Universit\'e Catholique de Louvain,\\ 
Chemin du Cyclotron, 2,\\
1348 Louvain-la-Neuve, Belgium\\
\vspace{2mm}
2: Universit\'e Reims Champagne-Ardenne\\
Laboratoire de Math\'ematiques\\
Moulin de la Housse - BP 1039\\
51687 Reims cedex 2, France\\
\vspace{2mm}
e-mails: \texttt{Pierre.Bieliavsky@uclouvain.be, victor.gayral@univ-reims.fr, Axelmg@melix.net, Florian.Spinnler@uclouvain.be}}
\end{center}%

\vskip 2cm

\begin{abstract}
We define and study a noncommutative Fourier transform on every homogeneous complex bounded domain. We then give an application in noncommutative differential geometry by defining noncommutative Baumslag-Solitar tori.
\end{abstract}

\noindent{\bf Key Words:} Strict deformation quantization, symmetric spaces, $\star$-representation, $\star$-exponential, noncommutative manifolds.

\vspace{2mm}

\noindent{\bf MSC (2010):} 22E45, 46L87, 53C35, 53D55.

\vskip 1cm

\tableofcontents

\section{Introduction}

In \cite{BG}, the  authors developed a tracial symbolic pseudo-differential calculus on every Lie group $G$ whose Lie algebra $\kg$ is a normal $j$-algebras in the sense of Pyatetskii-Shapiro \cite{PS}. The class of such Lie groups is in one to one correspondence 
with the class of homogeneous complex bounded domains. Each of them carries a
left-invariant Kahler structure.

\noindent As a by-product, they obtained a $G$-equivariant continuous linear mapping between the Schwartz space $\caS(G)$ of such a Lie group and a sub-algebra of Hilbert-Schmidt operators
on a Hilbert irreducible unitary $G$-module. This yields a one-parameter family of noncommutative associative multiplications 
$\{\star_\theta\}_{\theta\in\gR}$ on the Schwartz space, each of them endowing 
$\caS(G)$ with a Fr\'echet nuclear algebra structure. Moreover the resulting family of Fr\'echet algebras $\{(\caS(G),\star_\theta)\}_{\theta\in\gR}$ deforms the commutative 
Fr\'echet algebra structure on $\caS(G)$ given by the pointwise multiplication of functions corresponding to the value $\theta=0$ of the deformation parameter. Note that such a program was achieved in \cite{Rieffel:1993} for abelian Lie groups and in \cite{Bieliavsky:2010su} for abelian Lie supergroups.
\medskip

In the present article, we construct a bijective intertwiner between every
noncommutative Fr\'echet  algebra $(\caS(G),\star_\theta)$ ($\theta\neq0$) and 
a convolution function algebra on the group $G$. The intertwiner's kernel consists 
in a complex valued smooth function $\caE$ on the group $G\times G$ that we 
call ``$\star$-exponential" because of its similar nature with objects defined in 
\cite{F} and studied in \cite{AC} in the context of the Weyl-Moyal quantization of co-adjoint orbits of exponential Lie groups.

\noindent We then prove that the associated smooth map
$$
\caE: G\to C^\infty(G)
$$
consists in a group-morphism valued in the multiplier 
(nuclear Fr\'echet) algebra $\caM_{\star_\theta}(G)$
of $(\caS(G),\star_\theta)$. The above group-morphism integrates the classical
moment mapping 
$$
\lambda:\kg\to\caM_{\star_\theta}(G)\;\cap\;C^\infty(G)
$$
associated with the (symplectic) action of $G$ on itself by left-translations.

\noindent Next, we modify the 2-point kernel $\caE$ by a power of the modular function of $G$ in such a way that the corresponding Fourier-type transform
consists in  a unitary operator $\caF$ on the Hilbert space of square integrable 
functions with respect to a left-invariant Haar measure on $G$.

\noindent As an application, we define a class of noncommutative tori associated
to generalized Bauslag-Solitar groups in every dimension.

\vspace{3mm}

\noindent {\bf Acknowledgment.} One of us, Pierre Bieliavsky, spent the academic year 1995-1996 at UC Berkeley as a post-doc in the group of Professor Joseph A. Wolf.
It is a great pleasure for P. B. to warmly thank Professor Wolf for his support not only when a young post-doc but constantly during P. B. 's career. The research presented in this note is closely related to the talk P. B. gave at the occasion of the West Coast Lie Theory Seminar in November 1995 when studying some early stage features of the non-formal $\star$-exponential \cite{BUCB}.

\section{Homogeneous complex bounded domains and $j$-algebras}

The theory of $j$-algebras was much developed by Pyatetskii-Shapiro \cite{PS} for studying in a Lie-algebraic way the structure and classification of bounded homogeneous --- not necessarily symmetric --- domains in $\gC^n$. A $j$-algebra is roughly the Lie algebra $\kg$ of a transitive Lie group of analytic automorphisms of the domain, together with the data of the Lie algebra $\k$ of the stabilizer of a point in the latter Lie group, an endomorphism $j$ of $\kg$ coming from the complex structure on the domain, and a linear form on $\kg$ whose Chevalley coboundary gives the $j$-invariant symplectic structure coming from the K\"ahler structure on the domain. Pyatetskii-Shapiro realized that, among the $j$-algebras corresponding to a fixed bounded homogeneous domain, there always is at least one whose associated Lie group acts simply transitively on the domain, and which is realizable as upper triangular real matrices. Thoses $j$-algebras have the structure of \emph{normal $j$-algebras} which we proceed to describe now.

\begin{definition}
A {\defin normal $j$-algebra} is a triple $(\kg,\alpha,j)$ where
\begin{enumerate} 
\item $\kg$ is a solvable Lie algebra which is split over the reals, i.e. $\ad_X$ has only real eigenvalues for all $X\in \kg$, 
\item $j$ is an endomorphism of $\kg$ such that $j^2=-Id_{\kg}$ and $[X,Y]+j[jX,Y]+j[X,jY]-[jX,jY]=0$, $\forall X,Y \in \kg$,

\item  $\alpha$ is a linear form on $\kg$ such that:
$\alpha([jX,X])>0$ if $X\neq 0$ and $\alpha([jX,jY])=\alpha([X,Y])$, $\forall X,Y \in\kg$.
\end{enumerate}
\end{definition}

\noindent If $\b'$ is a subalgebra of $\b$ which is invariant by $j$, then $(\b',\alpha|_{\b'},j|_{\b'})$ is again a normal $j$-algebra, said to be a {\defin $j$-subalgebra} of $(\b,\alpha,j)$. A $j$-subalgebra whose algebra is at the same time an ideal is called a {\defin $j$-ideal}.

\begin{remark}
To each simple Lie algebra $\g$ of Hermitian type (i.e. such that the center of the maximal compact algebra $\k$ has real dimension one) we can attach a normal $j$-algebra $(\b,\alpha,j)$ where 
\begin{enumerate}
\item
$\b$ is the solvable Lie algebra underlying the Iwasawa factor $\b=\a \oplus\n$
of an Iwasawa decomposition $\k \oplus\a \oplus\n$ of $\g$.
\item Denoting by $\mathbb{G}/K$ the Hermitean symmetric space associated to 
the pair $(\g,\k)$ and by $\mathbb{G}=KAN$ the Iwasawa group decomposition corresponding to $\k\oplus\a\oplus\n$, 
the global diffeomorphism:
$$
G:=AN\longrightarrow \mathbb{G}/K:g\mapsto gK\;,
$$
endows the group $G$ with an exact left-invariant symplectic structure 
as well as a compatible complex structure. The evaluations at the unit
element $e\in G$ of these tensor fields define the elements ${\bf \Omega}=\dd\alpha$ and 
$j$ at the Lie algebra level.
\end{enumerate}
\end{remark}

\noindent It is important to note that not every normal $j$-algebra arises this way. Indeed, it is with the help of the theory of $j$-algebras that Pyatetskii-Shapiro discovered the first examples of non-symmetric bounded homogeneous domains. Nevertheless, they can all be built from these ``Hermitian'' normal $j$-algebras by a semi-direct product process, as we recall now.

\begin{definition}
\label{def-elemjalg}
A normal $j$-algebra associated with a rank one Hermitean symmetric space (i.e.\ $\dim \a=1$) is called {\defin elementary}.
\end{definition}

\begin{Lemma}
Let $(V,\omega_0)$ be a symplectic vector space of dimension $2n$, and $\kh_V:=V\oplus \gR E$ be the corresponding Heisenberg algebra : $[x,y]=\omega_0(x,y)E$, $[x,E]=0$ $\forall x,y \in V$. Setting $\ka := \gR H$, we consider the split extension of Lie algebras: 
\begin{equation*}
0\rightarrow \kh_V \rightarrow \ks:= \ka \ltimes \kh_V \rightarrow \ka \rightarrow 0,
\end{equation*}
with extension homomorphism $\rho_{\mathfrak{h}} : \mathfrak{a} \rightarrow  \Der(\mathfrak{h})$ given by 
\begin{equation*}
\rho_{\mathfrak{h}}(H)(x+\ell E):=[H,x+\ell E]:=x+2\ell E, \quad x\in V,\, \ell\in \gR. 
\label{defelementarynormal}
\end{equation*}
Then the Lie algebra $\ks$ underlines an elementary normal $j$-algebra. Moreover, 
every elementary normal $j$-algebra is of that form.
\end{Lemma}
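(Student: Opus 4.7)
The plan is to argue the two implications separately. For the direct direction we exhibit the $j$-structure and linear form $\alpha$ explicitly on $\ks$; for the converse we use the rank-one hypothesis to peel off a Heisenberg ideal from any elementary normal $j$-algebra.

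For the first implication, pick a positive $\omega_0$-compatible almost complex structure $J_0$ on $V$, i.e.\ $J_0^2=-\mathrm{Id}_V$ with $\omega_0(J_0x,J_0y)=\omega_0(x,y)$ and $g_0(x,y):=\omega_0(J_0x,y)$ symmetric and positive definite. Define an endomorphism $j$ of $\ks$ by $j|_V:=J_0$, $jH:=-E$, $jE:=H$, and a linear form $\alpha\in\ks^{*}$ by $\alpha(H)=0$, $\alpha|_V=0$, $\alpha(E)=1$. Solvability of $\ks$ and the split condition are immediate: $\ad_H$ acts on $\gR H\oplus V\oplus\gR E$ with real eigenvalues $0,1,2$ and $\kh_V$ is nilpotent. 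The identity $j^2=-\mathrm{Id}$ is clear, and the Nijenhuis-type integrability relation $[X,Y]+j[jX,Y]+j[X,jY]-[jX,jY]=0$ is verified by expanding $X,Y$ along $\gR H\oplus V\oplus\gR E$ and checking cases; the only non-trivial one is $X,Y\in V$, where the relation reduces to the symmetry of $g_0$. A direct computation for $X=aH+v+bE$ yields $[jX,X]=(2a^2+g_0(v,v)+2b^2)E+bv-aJ_0v$, so $\alpha([jX,X])=2a^2+g_0(v,v)+2b^2>0$ whenever $X\neq 0$. The $j$-invariance $\alpha([jX,jY])=\alpha([X,Y])$ is checked case-by-case using $\omega_0(J_0\cdot,J_0\cdot)=\omega_0$. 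Since $\a=\gR H$, the resulting normal $j$-algebra is by definition elementary.

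For the converse, let $(\kg,\alpha,j)$ be an elementary normal $j$-algebra with $\a=\gR H_0$. Since $\kg$ is split solvable, $\ad_{H_0}$ acts diagonalizably on the nilradical $\n=[\kg,\kg]$ with real eigenvalues, producing a weight decomposition $\n=\bigoplus_{\mu>0}\n_\mu$. The positivity $\alpha([jH_0,H_0])>0$ forces $jH_0\notin\a$, so $jH_0$ has a non-zero component in some highest weight space $\n_{\mu_0}$; the integrability identity with $X=H_0$ in fact localises $jH_0$ entirely in $\n_{\mu_0}$. After rescaling $H_0$ to $H$ so that $\mu_0=2$, set $E:=-jH\in\n_2$. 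Applying the integrability identity to $H$ and weight vectors, together with the constraint that the symmetric bilinear form $B(X,Y):=\alpha([jX,Y])$ is a $j$-invariant inner product for which $\ad_H$ is self-adjoint, one shows that the only weights occurring in $\n$ are $1$ and $2$ and that $\n_2=\gR E$. Setting $V:=\n_1$, weight additivity gives $[V,V]\subset\n_2=\gR E$, and the bracket $[x,y]=\omega_0(x,y)E$ defines a bilinear form $\omega_0$ on $V$ whose non-degeneracy follows from $\alpha([jv,v])=g_0(v,v)>0$. Together with the relations $[H,v]=v$, $[H,E]=2E$ and the centrality of $E$ in $\n$, this identifies $\kg$ with $\a\ltimes\kh_V$ equipped with the prescribed extension map $\rho_\kh$.

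The main obstacle lies in the converse, and specifically in the step showing that the only $\ad_H$-weights on $\n$ are $\{1,2\}$ and that $\n_2$ is one-dimensional. This is the structural core of the rank-one case, forcing the Heisenberg-parabolic shape of $\n$: the integrability identity constrains how $j$ interlaces the weight spaces while the positivity of $\alpha$ excludes additional root spaces. The cleanest path through this case analysis is to invoke the general root-space theorem for normal $j$-algebras developed by Pyatetskii-Shapiro, specialised to the rank-one situation.
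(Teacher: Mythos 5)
The paper offers no proof of this lemma: it is stated as a recalled structural fact, with the surrounding text deferring to Pyatetskii-Shapiro \cite{PS} and to \cite{BM,B07}, so there is no in-paper argument to measure yours against. Your forward direction is complete and correct: the choice $j|_V=J_0$, $jH=-E$, $jE=H$, $\alpha={}^\flat E$ is the standard one, the case-by-case verification of the integrability identity is right (the only nontrivial case $X,Y\in V$ does reduce to the symmetry of $g_0$), and the computation $[jX,X]=(2a^2+g_0(v,v)+2b^2)E+bv-aJ_0v$ checks out. Two small remarks there: split solvability requires \emph{all} $\ad_X$, not only $\ad_H$, to have real eigenvalues — immediate here because $[\ks,\ks]\subset\kh_V$ and every $\ad_X$ preserves the flag $\gR E\subset\kh_V\subset\ks$, acting on its graded pieces by $2a$, $a$, $0$ — and to meet the paper's definition of ``elementary'' literally (a normal $j$-algebra \emph{associated with a rank-one Hermitean symmetric space}) you should note that $\ks$ is the Iwasawa algebra of $\mathfrak{su}(1,n+1)$, which the paper itself records right after the definition; the parenthetical $\dim\a=1$ you rely on is the intended reading, so this is cosmetic.

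The converse is where your argument is not self-contained, and you say so yourself. One sub-claim is wrong as stated: ``the integrability identity with $X=H_0$ in fact localises $jH_0$ entirely in $\n_{\mu_0}$'' does not follow from that identity alone — substituting $Y=H_0$ or $Y=jH_0$ makes it vacuous — one must combine it with the positivity and $j$-invariance of the form $\alpha([j\cdot,\cdot])$ to control the components of $jH_0$. More importantly, the structural core (only the weights $1$ and $2$ occur on $\n$ and $\dim\n_2=1$) is precisely the rank-one specialisation of Pyatetskii-Shapiro's root-space theorem, which you invoke rather than prove. Given that the paper treats the entire lemma as a citation to \cite{PS}, resting the converse on that theorem is a defensible choice, but it should be presented honestly as an appeal to the root-space theorem, not as a consequence of the displayed identities; as a free-standing proof the converse has a genuine gap at exactly that step.
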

\noindent 
The main interest of elementary normal $j$-algebras is that they are the only building blocks of normal $j$-algebras, as shown by the following important property \cite{PS}.
\begin{proposition}\label{PS}
\begin{enumerate}
\item\label{PS_i} Let $(\b,\alpha,j)$ be a normal $j$-algebra, and $\z_1$ be one-dimensional ideal of $\b$. Then there exists a vector subspace $V$ of $\b$, such that $\mathfrak{s}=j\z_1+V+\z_1$ underlies an elementary normal $j$-ideal of $\b$. Moreover,
the associated extension sequence
$$
0\longrightarrow\mathfrak{s}\longrightarrow\b\longrightarrow\b'\longrightarrow0\;,
$$
is split as a sequence of normal $j$-algebras and such that:
\begin{enumerate}
\item $[\b',\a_1\oplus\z_1]=0$,
\item $[\b', V]\subset V$.
\end{enumerate}
\item Such one-dimensional ideals $\z_1$ always exist. In particular, every normal $j$-algebra admits a decomposition as a sequence of split extensions of elementary normal $j$-algebras with properties (a) and (b) above. 
\end{enumerate}
\end{proposition}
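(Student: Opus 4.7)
The existence of one-dimensional ideals (item 2) is immediate from Lie's theorem in its real form: applied to the adjoint representation of $\b$ on itself, the split-solvability hypothesis guarantees a common real eigenvector for all the operators $\ad_X$, $X\in\b$, which then spans a one-dimensional ideal.

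For item 1, start from $\z_1=\gR E$ and set $H:=jE$, $\ka_1:=\gR H=j\z_1$. The ideal property yields $[H,E]=cE$, and the positivity axiom $\alpha([jE,E])>0$ forces $c\neq 0$; after rescaling we may assume $[H,E]=2E$. Split-solvability makes $\ad H$ diagonalizable over $\gR$, producing a grading $\b=\bigoplus_\lambda\b_\lambda$ with $H\in\b_0$, $E\in\b_2$. A short grading argument shows that the character $\chi:\b\to\gR$ defined by $[X,E]=\chi(X)E$ satisfies $\chi|_{\b_\lambda}=0$ for every $\lambda\neq 0$, while $\chi(H)=2$.

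The crucial step is to control how $j$ interacts with the grading. Plugging $X\in\b_\lambda$ and $Y=E$ into the normal $j$-algebra identity $[jX,jY]=[X,Y]+j[jX,Y]+j[X,jY]$ and expanding $jX=\sum_\mu(jX)_\mu$ into eigencomponents yields $(\lambda-\mu)(jX)_\mu=0$ for $\mu\neq 0$ together with a companion constraint on $(jX)_0$. A case-by-case inspection then forces $j(\b_\lambda)\subset\b_\lambda$ for every $\lambda\notin\{0,2\}$, while on $\b_2$ the map $j$ can only spill into $\gR H\subset\b_0$. In particular $V:=\b_1$ is $j$-stable, and one gets a $j$-compatible decomposition $\b_2=\gR E\oplus W$. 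Set $\mathfrak{s}:=\ka_1\oplus V\oplus\gR E$. Since $[V,V]\subset\b_2$, the positive-definite form $(X,Y)\mapsto\alpha([jX,Y])$ on $V$ combined with the positivity axiom forces $[V,V]\subset\gR E$ and defines a nondegenerate $j$-invariant symplectic form $\omega_0$ via $[x,y]=\omega_0(x,y)E$; thus $\mathfrak{s}$ is a $j$-stable subspace carrying the elementary normal $j$-algebra structure.

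For the splitting, take $\b':=(\b_0\cap\ker\chi)\oplus W\oplus\bigoplus_{\lambda\notin\{0,1,2\}}\b_\lambda$. One has $[\b',\ka_1\oplus\z_1]=0$ (since each summand sits in $\ker\ad H\cap\ker\chi$, or contributes only brackets with $H,E$ forced to vanish by the grading), and $[\b',V]\subset V$ follows from the fact that elements of $\b_0$ commute with $\ad H$ and therefore stabilize its eigenspaces. The main obstacle, and the heart of the proposition, is to verify that $\b'$ is itself a $j$-stable Lie subalgebra: this amounts to showing that the higher-degree pieces close among themselves under both $[\,,\,]$ and $j$, which is the structural statement of Pyatetskii-Shapiro \cite{PS} on the graded root structure of normal $j$-algebras, proved by an inductive argument that itself peels off one elementary ideal at a time, exactly as the proposition prescribes.
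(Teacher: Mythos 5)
The paper itself gives no proof of this proposition --- it is quoted verbatim from Pyatetskii-Shapiro \cite{PS} --- so there is no internal argument to compare yours with; I can only judge the sketch on its own terms. The route you follow (grade $\b$ by the eigenvalues of $\ad_{jE}$, show the eigenspaces interact well with $j$, reconstruct the Heisenberg part from $\b_1$) is indeed the standard one, but two of your steps fail as written. The first is the assertion that ``split-solvability makes $\ad H$ diagonalizable over $\gR$''. This is false as a general implication: in any nilpotent (hence split solvable) Lie algebra, $\ad_X$ is nilpotent and non-semisimple for every $X$ outside the centre. The semisimplicity of $\ad_{jE}$, and the confinement of its spectrum to $\{0,1,2\}$, have to be extracted from the $j$-algebra axioms --- essentially by exploiting adjointness properties of $\ad_{jE}$ with respect to the positive definite form $\langle X,Y\rangle:=\alpha([jX,Y])$ --- and this is precisely one of the genuinely technical points of Pyatetskii-Shapiro's proof, not a consequence of split-solvability.

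The second problem is your treatment of $\b_2$. You allow a complement $W$ of $\gR E$ in $\b_2$ and then place $W$ inside $\b'$; but $\ad H$ acts on $W$ with eigenvalue $2$, so $[\b',\a_1]\neq 0$ as soon as $W\neq 0$, and property (a) fails. One must actually prove that $\b_2=\gR E$ (and that no eigenvalues outside $\{0,1,2\}$ occur), which your sketch does not do, and your claim $[V,V]\subset\gR E$ silently relies on it. Finally, your closing paragraph concedes that the closure of $\b'$ under both the bracket and $j$ --- which you rightly call the heart of the proposition --- is imported from \cite{PS}; at that point the argument becomes circular, since it invokes the very structure theorem it is supposed to establish. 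As it stands the proposal is an outline of where a proof would have to go, not a proof; the honest alternatives are to cite the result, as the paper does, or to carry out the full root-space analysis of \cite{PS}.
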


\subsection{Symplectic symmetric space geometry of elementary normal $j$-groups}
\label{subsec-elem}
In this section, we briefly recall results of \cite{BM, B07}.
\begin{definition}
The connected simply-connected real Lie group $G$ whose Lie algebra $\kg$ underlies a normal $j$-algebra is called a {\defin normal $j$-group}.
The connected simply connected Lie group $\gS$ whose Lie algebra $\ks$ underlies an elementary normal $j$-algebra is said to be an {\defin elementary normal $j$-group}.
\end{definition}
\noindent Elementary normal $j$-groups are exponential (non-nilpotent) solvable Lie groups. As an example, consider the Lie algebra $\mathfrak{s}$ of Definition \ref{def-elemjalg} where $V=\algzero$. It is generated over $\mathbb{R}$ by two elements $H$ and $E$ satisfying $[H,E]=2E$ and is therefore isomorphic to the Lie algebra of the group of affine transformations of the real line: in this case, $\mathbb{S}$ is the $ax+b$ group.

\noindent Now generally,  the Iwasawa factor $AN$ of the simple group $SU(1,n)$ (which corresponds to the above example in the case $n=1$) is an elementary normal $j$-group. 
\medskip

\noindent We realize $\gS$ on the product manifold underlying $\ks$:
$$
\gS\;=\;\gR\times V\times\gR\;=\;\{(a,x,\ell)\}\;.
$$
The group law of $\gS$ is given by
\begin{equation}
(a,x,\ell)\fois(a',x',\ell')=\Big(a+a',e^{-a'}x+x',e^{-2a'}\ell+\ell'+\frac12e^{-a'}\omega_0(x,x')\Big)\label{eq-gSlaw}
\end{equation}
and the inverse by $$(a,x,\ell)^{-1}=(-a,-e^ax,-e^{2a}\ell)\;.$$

\noindent We denote by
$$
\Ad^*:\gS\times\ks^\ast:(g,\xi)\mapsto\Ad^*_g(\xi)\;:=\;\xi\circ\Ad_{g^{-1}}
$$
the co-adjoint action of $\gS$ on the dual space $\ks^\ast$ of $\ks=\gR H\oplus V\oplus\gR E$.

\noindent In the dual $\ks^\ast$, we consider the elements  ${}^\flat H$ and ${}^\flat E$ as well as ${}^\flat x$ ($x\in V$) defined by 

\begin{align*}
&{}^\flat H|_{V\oplus \gR E}\;\equiv\;0,\qquad \langle{}^\flat H, H\rangle\;=\;1,\\
&{}^\flat E|_{\gR H\oplus V}\;\equiv\;0,\qquad \langle{}^\flat E, E\rangle\;=\;1,\\
&{}^\flat x|_{\gR H\oplus\gR E}\;\equiv\;0,\qquad \langle{}^\flat x,y\rangle\;=\;\omega_0(x,y) \quad (y\in V)\;.
\end{align*}
\begin{proposition}\label{elem-orbit}
\noindent Let $\caO_\ee$ denote the co-adjoint orbit  through the element $\ee\,{}^\flat E$, for $\ee=\pm 1$, equipped with its standard Kirillov-Kostant-Souriau symplectic
structure (referred to as KKS). Then the map
\begin{equation}
\gS\to\caO_\ee\;:\;(a,x,\ell)\mapsto \Ad^*_{(a,x,\ell)}(\ee\,{}^\flat E)=\ee(2\ell\, {}^\flat H-e^{-a}\,{}^\flat x+e^{-2a}\,{}^\flat E) \label{eq-orbitelem}
\end{equation}
is a $\gS$-equivariant global Darboux chart on $\caO_\ee$ in which the KKS two-form reads:
$$\omega:=\omega_{\gS}:=\ee(2\dd a\wedge\dd\ell+\omega_0)\;.$$
\end{proposition}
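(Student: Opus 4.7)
First, I would verify the explicit coadjoint formula in (\ref{eq-orbitelem}) by a direct computation in the group law (\ref{eq-gSlaw}). Since $\exp(tH)=(t,0,0)$, $\exp(tv)=(0,tv,0)$ and $\exp(tE)=(0,0,t)$, conjugating each of these one-parameter subgroups by $g=(a,x,\ell)$ and $g^{-1}=(-a,-e^ax,-e^{2a}\ell)$ and differentiating at $t=0$ yields
$$
\Ad_{g^{-1}}(H)=H+x+2\ell E,\qquad \Ad_{g^{-1}}(v)=e^{-a}v-e^{-a}\omega_0(x,v)E,\qquad \Ad_{g^{-1}}(E)=e^{-2a}E.
$$
Pairing each of these with $\xi_0:=\ee\,{}^\flat E$ and re-expressing the result in the dual basis $\{{}^\flat H,\{{}^\flat e_i\},{}^\flat E\}$ produces exactly the announced formula. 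The map $\Phi(a,x,\ell):=\Ad^{*}_{g}\xi_0$ is then visibly a bijection onto $\caO_\ee$: the ${}^\flat E$-coefficient recovers $a$, the $V^{*}$-part then recovers $x$, and the ${}^\flat H$-coefficient recovers $\ell$; smoothness of $\Phi$ and of its inverse is immediate from these expressions, and $\gS$-equivariance $\Phi\circ L_h=\Ad^{*}_{h}\circ\Phi$ is tautological.

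To identify the pulled-back KKS form, my plan is to avoid computing $\omega_{KKS}$ directly in dual coordinates on $\caO_\ee$ and instead exploit equivariance. Since $\Ad^{*}_{h}$ is a symplectomorphism, $\Phi^{*}\omega_{KKS}$ is a left-invariant closed 2-form on $\gS$, and evaluated at $e$ on $X,Y\in\ks$ it reduces to the Chevalley cocycle $\langle\xi_0,[X,Y]\rangle$. The identity $\dd\beta(\tilde X,\tilde Y)=-\beta([\tilde X,\tilde Y])$, valid for any left-invariant 1-form $\beta$ paired with left-invariant vector fields, then shows that $\Phi^{*}\omega_{KKS}=-\dd\alpha$, where $\alpha$ is the unique left-invariant 1-form on $\gS$ extending $\xi_0$.

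It thus only remains to write $\alpha$ in the chart $(a,x,\ell)$ and exterior-differentiate. Differentiating $L_g\circ\exp$ at $t=0$ via (\ref{eq-gSlaw}) produces the left-invariant vector fields
$$
\tilde H=\partial_a-\sum_i x^i\partial_{x^i}-2\ell\partial_\ell,\qquad \tilde v=\sum_i v^i\partial_{x^i}+\tfrac12\omega_0(x,v)\partial_\ell,\qquad \tilde E=\partial_\ell,
$$
and the linear system $\alpha(\tilde E)=\ee$, $\alpha(\tilde v)=0$, $\alpha(\tilde H)=0$ solves to $\alpha=\ee\bigl(2\ell\,\dd a-\tfrac12\omega_0(x,\dd x)+\dd\ell\bigr)$ (the computation of $\alpha(\tilde H)$ uses the antisymmetry $\omega_0(x,x)=0$). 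Using $\dd(\omega_0(x,\dd x))=2\omega_0$ one reads off $-\dd\alpha=\ee(2\,\dd a\wedge\dd\ell+\omega_0)$, which is $\omega_\gS$. The only real danger throughout is bookkeeping of sign conventions (KKS sign, left- versus right-invariance); once those are pinned down, every step reduces to a finite linear calculation in the basis $\{H,\{e_i\},E\}$ of $\ks$.
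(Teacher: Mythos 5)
Your proof is correct. The paper itself offers no argument for this proposition --- it is stated as a recollection of results from the references [BM, B07] --- so there is no in-text proof to compare against; your direct verification is a sound way to supply one. I checked the three conjugation computations ($\Ad_{g^{-1}}H=H+x+2\ell E$, $\Ad_{g^{-1}}v=e^{-a}v-e^{-a}\omega_0(x,v)E$, $\Ad_{g^{-1}}E=e^{-2a}E$) against the group law \eqref{eq-gSlaw}, and they match; your left-invariant vector fields agree with those the paper lists later in Section 4.1, and the resulting potential $\alpha=\ee\bigl(2\ell\,\dd a-\tfrac12\omega_0(x,\dd x)+\dd\ell\bigr)$ gives $-\dd\alpha=\ee(2\,\dd a\wedge\dd\ell+\omega_0)$ as required. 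Your sign conventions (KKS form as $\omega_\xi(X^\ast,Y^\ast)=\langle\xi,[X,Y]\rangle$ on fundamental fields, and $\Ad^\ast_g\xi=\xi\circ\Ad_{g^{-1}}$) are exactly the ones the paper uses in the proof of Proposition \ref{prop-moment}, so the identification of the pulled-back form is consistent with the rest of the text; the strategy of deducing left-invariance of $\Phi^\ast\omega_{KKS}$ from equivariance and then reducing to a Lie-algebra computation at $e$ is the natural one and is also what the paper implicitly relies on.
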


\noindent Within this setting, we consider the moment map of the action of $\gS$ on $\caO_\ee\simeq\gS$:
$$
\lambda:\ks\to C^\infty(\gS): X\mapsto\lambda_X
$$ 
defined by the relations:
$$
\lambda_X(g)\;:=\;\langle\,\Ad_g^\ast\left(\ee\,{}^\flat E\right)\,,\,X\,\rangle\;.
$$
\begin{lemma} Denoting, for every $X\in\ks$, the associated  fundamental vector field by
$$
X^*_g\;:=\;\frac{\rm d}{{\rm d}t}|_0\,\exp(-tX).g\;,
$$ 
one has ($y\in V$):
\begin{equation*}
H^\ast=-\partial_a\;,\qquad y^\ast=-e^{-a}\partial_{y}+\frac12e^{-a}\omega_0(x,y)\partial_\ell\;,\qquad E^\ast=-e^{-2a}\partial_\ell\;.
\end{equation*}
Moreover the moment map reads:
\begin{equation}
\lambda_H(a,x,\ell)=2\ee\ell\;,\qquad \lambda_y(a,x,\ell)=e^{-a}\ee\omega_0(y,x)\;,\qquad \lambda_E(a,x,\ell)=\ee e^{-2a}\;.\label{eq-elem-moment}
\end{equation}
\end{lemma}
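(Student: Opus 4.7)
The plan is to reduce everything to a direct computation using the explicit group law \eqref{eq-gSlaw}: the fundamental vector fields come from differentiating left translations by one-parameter subgroups, and the moment map is then read off the orbit formula \eqref{eq-orbitelem} that was established in Proposition~\ref{elem-orbit}.

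First I would identify the one-parameter subgroups $\exp(tX)$ for $X=H,y,E$ in the coordinates $(a,x,\ell)$. Using the group law, a short check shows that the curves $(t,0,0)$, $(0,ty,0)$ and $(0,0,t)$ are one-parameter subgroups of $\gS$: for instance
\[
(0,ty,0)\fois(0,sy,0)=\Big(0,(t+s)y,\tfrac12\omega_0(ty,sy)\Big)=(0,(t+s)y,0)
\]
since $\omega_0$ is alternating, and the two other cases are even more immediate. Hence $\exp(tH)=(t,0,0)$, $\exp(ty)=(0,ty,0)$ and $\exp(tE)=(0,0,t)$. (These coordinates happen to coincide with exponential coordinates along the three factors.)

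Next I would plug into $X^*_g=\tfrac{\dd}{\dd t}\big|_0\exp(-tX)\fois g$ for $g=(a,x,\ell)$ and compute via \eqref{eq-gSlaw}:
\begin{align*}
(-t,0,0)\fois(a,x,\ell)&=(a-t,\,x,\,\ell),\\
(0,-ty,0)\fois(a,x,\ell)&=\Big(a,\,x-te^{-a}y,\,\ell+\tfrac12 e^{-a}\omega_0(-ty,x)\Big),\\
(0,0,-t)\fois(a,x,\ell)&=(a,\,x,\,\ell-te^{-2a}).
\end{align*}
Differentiating each at $t=0$ yields $H^\ast=-\partial_a$, $E^\ast=-e^{-2a}\partial_\ell$, and
\[
y^\ast=-e^{-a}\partial_y-\tfrac12 e^{-a}\omega_0(y,x)\partial_\ell=-e^{-a}\partial_y+\tfrac12 e^{-a}\omega_0(x,y)\partial_\ell,
\]
using antisymmetry of $\omega_0$. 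This matches the stated formulas.

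Finally, for the moment map I would simply pair the orbit formula \eqref{eq-orbitelem} with the relevant basis elements. Using the definitions of ${}^\flat H$, ${}^\flat E$, and ${}^\flat x$ one finds
\[
\lambda_H(a,x,\ell)=\langle\ee(2\ell\,{}^\flat H-e^{-a}\,{}^\flat x+e^{-2a}\,{}^\flat E),H\rangle=2\ee\ell,
\]
\[
\lambda_y(a,x,\ell)=-\ee e^{-a}\,\omega_0(x,y)=\ee e^{-a}\,\omega_0(y,x),\qquad
\lambda_E(a,x,\ell)=\ee e^{-2a},
\]
which are exactly \eqref{eq-elem-moment}. There is no real obstacle; the only point requiring care is the sign convention $X^\ast=\tfrac{\dd}{\dd t}|_0\exp(-tX).g$ (so that $X\mapsto X^\ast$ is a Lie algebra morphism), combined with the antisymmetry of $\omega_0$ when rewriting $-\omega_0(y,x)=\omega_0(x,y)$.
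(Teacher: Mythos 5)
Your computation is correct and is the natural direct verification: the paper itself does not prove this lemma (it is recalled from \cite{BM, B07}), so there is no alternative argument to compare against. Your identification of the one-parameter subgroups $(t,0,0)$, $(0,ty,0)$, $(0,0,t)$ is consistent with the explicit exponential map given later in Section 3.4, the differentiations via the group law \eqref{eq-gSlaw} reproduce the stated vector fields, and the pairing of \eqref{eq-orbitelem} with $H$, $y$, $E$ gives exactly \eqref{eq-elem-moment}.
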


\begin{proposition}
The map
$$
s:\gS\times\gS\to\gS:(g,g')\mapsto s_gg'
$$
defined by
\begin{equation}
s_{(a,x,\ell)}(a',x',\ell')=\Big(2a-a',2\cosh(a-a')x-x',2\cosh(2(a-a'))\ell-\ell'+\sinh(a-a')\omega_0(x,x')\Big)\label{eq-symstruct}
\end{equation}
endows the Lie group $\gS$ with a left-invariant structure of symmetric space in the sense of O. Loos (cf. \cite{L}).

\noindent  Moreover the symplectic structure $\omega$ is invariant under the symmetries: for every $g\in\gS$, one has
$$
s_g^\ast\omega\;=\;\omega\;.
$$
\end{proposition}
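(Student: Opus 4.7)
The starting observation is that the formula \eqref{eq-symstruct} can be recast in terms of the single involution $\sigma:=s_{(0,0,0)}$ at the origin. Reading off \eqref{eq-symstruct} at $g=(0,0,0)$ gives $\sigma(a,x,\ell)=(-a,-x,-\ell)$, and a short computation using \eqref{eq-gSlaw} together with the inversion formula $(a,x,\ell)^{-1}=(-a,-e^a x,-e^{2a}\ell)$ shows that
\begin{equation*}
s_g(g')\;=\;g\cdot\sigma(g^{-1}\cdot g'),\qquad g,g'\in\gS.
\end{equation*}
This factorization has two immediate consequences: first, left-invariance $s_{hg}(hg')=h\cdot s_g(g')$ becomes automatic, since $(hg)\sigma((hg)^{-1}hg')=h\cdot g\sigma(g^{-1}g')$; second, the full symmetric-space structure is encoded in the single map $\sigma$, so Loos's axioms reduce to identities on $\sigma$.

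Three of Loos's axioms then follow by inspection. The fixed-point condition $s_g(g)=g$ reduces to $\sigma(e)=e$, which is clear. The involutivity $s_g\circ s_g=\mathrm{id}$ reduces to $\sigma^2=\mathrm{id}$, again clear. Finally, the origin is the unique solution of $(-a,-x,-\ell)=(a,x,\ell)$, and $d\sigma_e=-\mathrm{id}_{\ks}$, so $e$ is an isolated fixed point of $s_e=\sigma$.

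The remaining Loos axiom $s_g\circ s_{g'}\circ s_g = s_{s_g(g')}$ requires actual calculation. Writing $u:=g^{-1}g'$, $v:=g^{-1}g''$ and unfolding the factorization, one finds that the axiom is equivalent to the pointwise identity
\begin{equation*}
\sigma\bigl(u\cdot\sigma(u^{-1}\cdot\sigma(v))\bigr)\;=\;\sigma(u)\cdot\sigma\bigl(\sigma(u)^{-1}\cdot v\bigr),\qquad u,v\in\gS.
\end{equation*}
I would verify this by expanding both sides in the $(a,x,\ell)$ coordinates using \eqref{eq-gSlaw}: since $\sigma$ acts coordinatewise by sign change and the group law is polynomial in $a,x,\ell$ and $e^{\pm a}$, the check reduces to matching three explicit expressions componentwise, with the $\ell$-component involving the $\omega_0$-term as the only term of any length. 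This is the main, and essentially only, computational obstacle in the proof.

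For the second assertion, note that $s_g = L_g\circ\sigma\circ L_{g^{-1}}$ and that $\omega$ is left-invariant (being the pull-back under the $\gS$-equivariant chart of Proposition \ref{elem-orbit} of the KKS form on $\caO_\ee$). Consequently $s_g^\ast\omega = L_{g^{-1}}^\ast\sigma^\ast L_g^\ast\omega = L_{g^{-1}}^\ast\sigma^\ast\omega$, and invariance under all symmetries reduces to the single identity $\sigma^\ast\omega=\omega$. In the Darboux coordinates, $\omega=\ee(2\,\dd a\wedge \dd\ell+\omega_0)$; pulling back by $\sigma$ flips the sign of $\dd a$, of $\dd\ell$, and of each $\dd x$-direction, so the two signs cancel in each wedge product and $\sigma^\ast\omega=\omega$ follows. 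Combined with the left-invariance of $\omega$, this yields $s_g^\ast\omega=\omega$ for every $g\in\gS$.
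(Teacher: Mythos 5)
The paper does not actually prove this proposition: it is recalled from \cite{BM}, \cite{B07} at the start of Section 2.1, so there is no in-text argument to compare yours against. Judged on its own terms, your reduction is correct and efficient. The factorization $s_g(g')=g\cdot\sigma(g^{-1}\cdot g')$ with $\sigma(a,x,\ell)=(-a,-x,-\ell)$ does hold (in the $\ell$-component the two $\omega_0$-contributions combine to $\frac12 e^{a-a'}-\frac12 e^{-(a-a')}=\sinh(a-a')$ because $\omega_0(x,x)=0$), and from it left-invariance, $s_g(g)=g$, $s_g^2=\mathrm{id}$, the isolated-fixed-point condition, and the reduction of the third Loos axiom to your displayed identity in $u,v$ all follow exactly as you say. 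The second assertion is handled completely: left-invariance of $\omega$ comes from the equivariance of the chart of Proposition \ref{elem-orbit}, and $\sigma^\ast\omega=\omega$ is immediate since $\omega$ has constant coefficients in these Darboux coordinates and $\sigma$ is minus the identity there.

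The gap is that the one substantive step --- the verification of $s_g\circ s_{g'}\circ s_g=s_{s_g(g')}$, equivalently of $\sigma\bigl(u\sigma(u^{-1}\sigma(v))\bigr)=\sigma(u)\sigma(\sigma(u)^{-1}v)$ --- is only announced (``I would verify this by expanding both sides'') and never carried out, and this identity is where all the content of the proposition lives. Be aware that there is no soft way around it: $\sigma$ is \emph{not} a group automorphism of $\gS$ (compare $\sigma(g)\sigma(g')$ with $\sigma(gg')$ using \eqref{eq-gSlaw}: the factor $e^{-a'}$ becomes $e^{a'}$), nor an anti-automorphism, so the usual formal argument for $s_g(h)=g\tau(g^{-1}h)$ with $\tau$ an involutive automorphism does not apply. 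The computation does succeed --- in the $V=\algzero$ case both sides of the third axiom agree via $2\cosh A\cosh B=\cosh(A+B)+\cosh(A-B)$, and in general the $x$-component and the mixed $\omega_0$-terms close up by the analogous addition formulas for $\cosh$ and $\sinh$ of the half-differences $a-a'$ --- but as written your proof asserts rather than establishes it. Either include that expansion or do what the paper does and cite \cite{BM}, \cite{B07} for the formula.
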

\subsection{Normal j-groups}
\label{subsec-normal}

The above Proposition \ref{PS} implies that every normal $j$-group $G$ can be decomposed into  a semi-direct product 
\begin{equation}\label{SDP}
G\;=\;G_1\ltimes_\rho\gS_2
\end{equation}
where $$\gS_2\;:=\;\R H_2\times V_2\times \R E_2$$ is an elementary normal $j$-group of real dimension $2n_2+2$ and  $G_1$ is a normal $j$-group. This means that the group law of $G$ has the form
\begin{equation*}
\forall g_1,g_1'\in G_1,\ \forall g_2,g_2'\in\gS_2\quad:\quad (g_1,g_2)\fois(g_1',g_2')=\Big( g_1\fois g_1',\ g_2\fois(\rho(g_1)g_2')\Big),
\end{equation*}
where $\rho:G_1\to Sp(V_2,\omega_0)$ denotes the extension homomorphism; and the inverse is given by $(g_1,g_2)^{-1}=(g_1^{-1},\rho(g_1^{-1})g_2^{-1})$. As a consequence, every normal $j$-group  therefore results as a sequence of semidirect products of a finite number of elementary normal $j$-groups.

\begin{proposition}
\label{prop-moment} Consider the decomposition (\ref{SDP}). Then, 
\begin{enumerate}
\item the Lie group $G_1$ admits an open co-adjoint orbit $\caO_1$ through an element $o_1\in\kg_1^\ast$ which it acts on in a  simply transitive way.
\item The co-adjoint   orbit $\caO$ of $G$ through the element $o\;:=\;o_1+\ee_2\,{}^\flat E_2$ (same notation as in Section \ref{subsec-elem})
is open in $\kg^\ast$.
\item Denoting by $\caO_2$ the co-adjoint orbit of $\gS_2$ through $\ee_2\,{}^\flat E_2$, the map
\begin{equation}\label{SYMPL}
\phi\,:\,\caO_1\times\caO_2\to\caO:(\Ad_{g_1}^\ast o_1,\ee_2\Ad_{g_2}^\ast{}^\flat E_2)\mapsto\Ad^\ast_{(g_1,g_2)}(o)
\end{equation}
is a symplectomorphism when endowing each orbit with its KKS two-form.
\end{enumerate}
\end{proposition}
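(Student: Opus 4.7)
The plan is to prove (1), (2) and (3) simultaneously by induction on the number of elementary factors in the iterated decomposition of $G$ furnished by Proposition~\ref{PS}. The base case where $G=\gS$ is itself elementary is immediate: (1) is vacuous (with $G_1$ trivial) and (2)--(3) reduce to Proposition~\ref{elem-orbit}. For the inductive step, writing $G = G_1\ltimes_\rho\gS_2$ with $G_1$ carrying one fewer elementary factor, the induction applied to $G_1$ yields (1) directly.

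The heart of the argument is a computation of the Kirillov form at the basepoint $o = o_1 + \ee_2\,{}^\flat E_2$. The Lie bracket in $\kg=\kg_1\oplus\ks_2$ reads
\begin{equation*}
[X_1+X_2,\,Y_1+Y_2] \;=\; [X_1, Y_1]_{\kg_1} + [X_2, Y_2]_{\ks_2} + \rho_*(X_1)Y_2 - \rho_*(Y_1)X_2,
\end{equation*}
and the structural conditions $[\kg_1,\gR H_2\oplus\gR E_2]=0$ and $[\kg_1,V_2]\subset V_2$ from Proposition~\ref{PS}(\ref{PS_i}) force the cross terms to lie in $V_2$. Since $\ee_2\,{}^\flat E_2$ annihilates $V_2$, these cross terms disappear when paired with $o$, giving
\begin{equation*}
\langle o,\,[X, Y]\rangle \;=\; \langle o_1,\,[X_1, Y_1]_{\kg_1}\rangle + \langle \ee_2\,{}^\flat E_2,\,[X_2, Y_2]_{\ks_2}\rangle.
\end{equation*}
In other words, the KKS two-form at $o$ splits as the direct sum of the KKS forms at $o_1$ and at $\ee_2\,{}^\flat E_2$. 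By the inductive hypothesis and Proposition~\ref{elem-orbit}, these latter forms have trivial kernels, hence so does the form at $o$; this is precisely assertion~(2).

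For (3), simple transitivity of $G_1$ on $\caO_1$ and of $\gS_2$ on $\caO_2$ identifies the source of $\phi$ with $G_1\times\gS_2 = G$, making $\phi$ well-defined, smooth and surjective onto $\caO$. Injectivity uses that $\rho(g_1)$ fixes $E_2$, so $\Ad^*_{(g_1, 1)}(o) = \Ad^*_{g_1}(o_1) + \ee_2\,{}^\flat E_2$: one recovers $g_1$ from the $\kg_1^*$-projection (after accounting for the $\ks_2$-twist in $\Ad_{(1, g_2)}$ acting on $\kg_1$), and then $g_2$ from the residual $\ks_2^*$-data via the Darboux chart of Proposition~\ref{elem-orbit}. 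Finally, $G$-equivariance of $\phi$ and $G$-invariance of $\omega_\caO$ globalize the basepoint splitting of the Kirillov form into $\phi^*\omega_\caO = \omega_{\caO_1}\oplus\omega_{\caO_2}$.

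The main obstacle is the injectivity step. While the basepoint analysis is clean, $\Ad_{(1, g_2)}$ maps $\kg_1$ into $\kg_1\oplus\ks_2$ with a nontrivial $\ks_2$-correction stemming from the derivative of $g_2\,\rho(\cdot)\,g_2^{-1}$ at the identity of $G_1$. Verifying that this correction, evaluated against $\ee_2\,{}^\flat E_2$, yields exactly the $V_2^*$-shift predicted by the Darboux formula~(\ref{eq-orbitelem}) is the only non-formal point and requires an explicit coordinate computation extending Proposition~\ref{elem-orbit} through the semidirect product structure.
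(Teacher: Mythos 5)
Your overall skeleton (induction on the number of elementary factors, reduction of the base case to Proposition \ref{elem-orbit}, and the splitting $\langle o,[X,Y]\rangle=\langle o_1,[X_1,Y_1]\rangle+\langle\ee_2\,{}^\flat E_2,[X_2,Y_2]\rangle$ via the fact that the cross terms land in $V_2$ where ${}^\flat E_2$ vanishes) matches the paper's proof, and your basepoint computation is a clean way to get openness in point (2). But the proof is not complete: you yourself flag injectivity of $\phi$ as an ``obstacle'' requiring ``an explicit coordinate computation,'' and that is precisely the crux of the statement (simple transitivity of $G$ on $\caO$, which also feeds the induction for point (1)). Moreover, the order in which you propose to recover the group element cannot work as stated: the $\kg_1^\ast$-component of $\Ad^\ast_{(g_1,g_2)}(o)$ contains the $\kg_1^\ast$-part of $\ee_2\Ad^\ast_{g_2}({}^\flat E_2)$, which depends on the still-unknown $g_2$, so $g_1$ cannot be read off first ``after accounting for the twist.'' The paper closes this gap with a short structural argument, no coordinates needed: since $\ks_2$ is an ideal and $\Ad^\ast_{g_1}(o_1)$ vanishes on $\ks_2$ while $\rho(g_1^{-1})_{\ast e}$ fixes ${}^\flat E_2$, the restriction of $\Ad^\ast_{(g_1,g_2)}(o)$ to $\ks_2$ is exactly $\ee_2\Ad^\ast_{g_2}({}^\flat E_2)$ computed inside $\gS_2$, which pins down $g_2$ by Proposition \ref{elem-orbit}; then the identity $\Ad^\ast_{g_2}\xi_1=\xi_1$ for every $\xi_1\in\kg_1^\ast$ (because $\Ad_{g_2^{-1}}X_1-X_1=\frac{\dd}{\dd t}|_0\,g_2^{-1}\rho(\exp tX_1)g_2\in\ks_2$ is annihilated by $\xi_1$) yields $\Ad^\ast_{(g_1,g_2)}(o)=\Ad^\ast_{g_1}(o_1)+\ee_2\Ad^\ast_{g_2}({}^\flat E_2\circ\rho(g_1^{-1})_{\ast e})$, from which $g_1=e$ follows by the induction hypothesis once $g_2=e$ is known.

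A second, subtler soft spot is your globalization of the symplectic identity ``by $G$-equivariance.'' The action of $G$ transported to $\caO_1\times\caO_2$ through $\phi$ is not the product action: $(h_1,h_2)$ moves the second factor by $g_2\mapsto h_2\,\rho(h_1)g_2$, so you would still have to check that the $\rho(h_1)$-twist preserves $\omega^{\caO_2}$ (it does, because $\rho(h_1)$ is a symplectic automorphism of $\gS_2$ fixing ${}^\flat E_2$, but this is an additional verification, not a formal consequence of invariance of $\omega^{\caO}$). The paper avoids the issue by computing $\phi^\ast\omega^{\caO}$ directly at a general point $\Ad^\ast_{(g_1,g_2)}(o)$: after moving $\Ad_{g_2^{-1}}$ onto the arguments, the only cross term is $\ee_2\langle{}^\flat E_2,\rho(g_1^{-1})_{\ast e}(-\rho(Y_1)\Ad_{g_2^{-1}}X_2+\rho(X_1)\Ad_{g_2^{-1}}Y_2)\rangle$, which vanishes for the same reason as at the basepoint, namely that this vector lies in $V_2$. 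You should either carry out that general-point computation or justify that the twisted action on the product is symplectic.
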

\begin{proof} We proceed by induction on the dimension in proving that $\caO$ is acted on by $G$ in a simply transitive way.
By induction hypothesis, so is $\caO_1$ by $G_1$. And Proposition \ref{elem-orbit} implies it is the case for $\caO_2$ as well.
Now denoting $(g_1,e)\;=:\;g_1$ and $(e,g_2)\;=:\;g_2$, we observe:
$$
\Ad^\ast_{(g_1,g_2)}(o)=\Ad^\ast_{g_2g_1}(o)=\Ad^\ast_{g_2}\left(\Ad^\ast_{g_1}(o_1)+\ee_2\,{}^\flat E_2\circ\rho(g_1^{-1})_{\ast e}\right)
$$
where $\rho:G_1\to\Aut(\gS_2)$ denotes the extension homomorphism.

\noindent Now for all $\xi_1\in\kg_1^\ast\,,\, X_1\in\kg_1\,,\,X_2\in\ks_2$ and $g_2\in\gS_2$:
$$
\langle\Ad_{g_2}^\ast\xi_1\,,\,X_1+X_2\rangle=\langle\xi_1\,,\,\Ad_{g_2^{-1}}X_1+\Ad_{g_2^{-1}}X_2\rangle=\langle\xi_1\,,\,\Ad_{g_2^{-1}}X_1\rangle\;.
$$
But
$$
\Ad_{g_2^{-1}}X_1\;=\;\frac{{\rm d}}{{\rm d}t}|_0(\exp(t X_1),g_2^{-1})(e,g_2)\;=\;\frac{{\rm d}}{{\rm d}t}|_0(\exp(t X_1),g_2^{-1}\rho(\exp(tX_1))g_2)\;.
$$
Hence
$$
\langle\xi_1\,,\,\Ad_{g_2^{-1}}X_1\rangle\;=\;\langle\xi_1\,,\,X_1\oplus\left(\frac{{\rm d}}{{\rm d}t}|_0g_2^{-1}\rho(\exp(tX_1))g_2\right)\rangle\;=\;\langle\xi_1\,,\,X_1\rangle\;.
$$
Therefore $\Ad_{g_2}^\ast\xi_1=\xi_1$ and we get
$$
\Ad^\ast_{(g_1,g_2)}(o)\;=\;\Ad^\ast_{g_1}(o_1)\,+\ee_2\,\Ad^\ast_{g_2}\left({}^\flat E_2\circ\rho(g_1^{-1})_{\ast e}\right)\;.
$$
The induction hypothesis thus implies that the stabilizer of element $o$ in $G$ is trivial. Which shows in particular that  the fundamental group
of $\caO$ is trivial.
The map (\ref{SYMPL}) being a surjective submersion is therefore a diffeomorphism. 

\noindent It remains to prove the assertion regarding the symplectic structures. Denoting by $\omega^{\caO}$ the KKS form on $\caO$, we observe that,
with obvious notations, 
for all $Y_1\in\kg_1$ and $Y_2\in\ks_2$:
\begin{eqnarray*}
&&\phi^\ast\omega^{\caO}(X^\ast_1\oplus X_2^\ast,Y_1^\ast\oplus Y_2^\ast)\\&=&
\omega^{\caO}_{\Ad^\ast_{(g_1,g_2)}(o)}(\phi_\ast X^\ast_1+ \phi_\ast  X_2^\ast,\phi_\ast Y_1^\ast+ \phi_\ast Y_2^\ast)\\
&=&
\omega^{\caO}_{\Ad^\ast_{(g_1,g_2)}(o)}(\left(\Ad_{g_2}X_1\right)^\ast+  X_2^\ast,\left(\Ad_{g_2}Y_1\right)^\ast+ Y_2^\ast)\\
&=&
\langle\,\Ad^\ast_{(g_1,g_2)}(o)\,,\,[\Ad_{g_2}X_1+  X_2\,,\,\Ad_{g_2}Y_1+ Y_2]\,\rangle\\
&=&
\langle\,\Ad^\ast_{g_1}(o)\,,\,[X_1+  \Ad_{g_2^{-1}}X_2\,,\,Y_1+  \Ad_{g_2^{-1}}Y_2]\,\rangle\\
&=&
\langle\,\Ad^\ast_{g_1}(o)\,,\,[X_1,Y_1]-  \rho(Y_1)\Ad_{g_2^{-1}}X_2+  \rho(X_1)\Ad_{g_2^{-1}}Y_2+\Ad_{g_2^{-1}}[X_2,Y_2]\,\rangle\\
&=&\omega^{\caO_1}_{\Ad^\ast_{g_1}(o_1)}(X^\ast_1,Y_1^\ast)+\ee_2\omega^{\caO_2}_{\Ad^\ast_{g_2}{}^\flat E_2}(X^\ast_2,Y_2^\ast)\\
&+&\ee_2\langle{}^\flat E_2\,,\,\rho(g_1^{-1})_{\ast e}\left(
-  \rho(Y_1)\Ad_{g_2^{-1}}X_2+  \rho(X_1)\Ad_{g_2^{-1}}Y_2
\right)\rangle\;.
\end{eqnarray*}
The last term in the above expression vanishes identically. Indeed, the specific form of $\rho$ implies that the element
$v_2\;:=\;-  \rho(Y_1)\Ad_{g_2^{-1}}X_2+  \rho(X_1)\Ad_{g_2^{-1}}Y_2$ lives in $V_2$ as well as $\rho(g_1^{-1})_{\ast e}v_2$.
\end{proof}
 
 \begin{remark}
 \label{rmk-decomp}
 Normal $j$-groups can be decomposed into elementary normal $j$-groups $\gS_k$ as $G=\big(\dots (\gS_1\ltimes_{\rho_1}\gS_2)\ltimes_{\rho_2}\dots\big)\ltimes_{\rho_{N-1}}\gS_N$ and the co-adjoint orbits described in Proposition \ref{prop-moment} are determined by sign choices $\ee_k=\pm 1$ for each factor $\gS_k$. We will denote $\caO_{(\ee)}$ the co-adjoint orbit associated to the signs $(\ee_k)_{1\leq k\leq N}\in(\gZ_2)^N$.
 \end{remark}

\begin{example}
\label{ex-normal-siegel}
Let us describe the following example corresponding to the six-dimensional Siegel domain $\mbox{\rm Sp}(2,\R)/\mbox{\rm U}(2)$. Let $G_1=\gS_1$ be of dimension 2 ($V_1=\algzero$, $G_1$ is the affine group), $\gS_2$ of dimension 4, i.e. $V_2$ is of dimension 2, with basis $f_2,f'_2$ endowed with $\omega_0=\begin{pmatrix} 0&1 \\ -1& 0 \end{pmatrix}$), and the action $\rho:\gS_1\to Sp(V_2)$ be given by
\begin{equation*}
\rho(a_1,\ell_1)=\begin{pmatrix} e^{a_1}& 0 \\ e^{-a_1}\ell_1 & e^{-a_1} \end{pmatrix}.
\end{equation*}
Then the group law is
\begin{multline*}
(a_1,\ell_1,a_2,v_2,w_2,\ell_2)\fois(a'_1,\ell'_1,a_2',v_2',w_2'\ell_2')=\Big(a_1+a_1',e^{-2a_1'}\ell_1+\ell_1',a_2+a_2',e^{-a_2'}v_2+e^{a_1}v_2',\\
e^{-a_2'}w_2+e^{-a_1}\ell_1v_2'+e^{-a_1}w_2', e^{-2a_2'}\ell_2+\ell_2'+\frac12 e^{-a_2'}(e^{-a_1}\ell_1 v_2v_2'+e^{-a_1}v_2w_2'-e^{a_1}w_2v_2')\Big),
\end{multline*}
where 
$(a_1,\ell_1)\in\gS_1\;,\;(a_2,v_2,w_2,\ell_2)\in\gS_2$ and
$$
g:=(a_1,\ell_1,a_2,v_2,w_2,\ell_2)=e^{a_2H_2}e^{v_2f_2+w_2f'_2}e^{\ell_2E_2}e^{a_1H_1}e^{\ell_1 E_1}\;.$$
Its Lie algebra is characterized by
\begin{align*}
&[H_1,E_1]=2E_1,\qquad [H_2,f_2]=f_2,\qquad [H_2,f'_2]=f'_2,\qquad [f_2,f'_2]=E_2,\\
&[H_2,E_2]=2E_2,\qquad [H_1,f_2]=f_2,\qquad [H_1,f'_2]=-f'_2,\qquad [E_1,f_2]=f'_2,
\end{align*}
where the other relations vanish. The co-adjoint action takes the form
\begin{multline*}
\Ad^{*}_{g}(\ee_1\,{}^\flat E_1+\ee_2\,{}^\flat E_2)=(2\ee_1\ell_1+\ee_2 v_2w_2){}^\flat H_1+(\ee_1 e^{-2a_1}-\frac{\ee_2}{2}v_2^2){}^\flat E_1\\
+\ee_2(2\ell_2{}^\flat H_2-e^{-a_2}v_2{}^\flat f_2-e^{-a_2}w_2{}^\flat f'_2+e^{-2a_2}{}^\flat E_2)\;.
\end{multline*}
The moment map can then be extracted from this expression:
\begin{align*}
&\lambda_{H_1}=2\ee_1\ell_1+\ee_2 v_2w_2,\qquad \lambda_{E_1}=\ee_1 e^{-2a_1}-\frac{\ee_2}{2}v_2^2,\qquad \lambda_{H_2}=2\ee_2\ell_2,\\
&\lambda_{f_2}=\ee_2 e^{-a_2}w_2,\qquad \lambda_{f'_2}=-\ee_2 e^{-a_2}v_2,\qquad \lambda_{E_2}=\ee_2 e^{-2a_2}.
\end{align*}
\end{example}

\section{Determination of the star-exponential}

\subsection{Quantization of elementary groups}
\label{subsec-quelem}

We follow the analysis developed in \cite{BG}, where the reader can find all the proofs. In the notations of section \ref{subsec-elem}, we choose two Lagrangian subspaces in duality $V_0,V_1$ of the symplectic vector space $(V,\omega_0)$ of dimension $2n$ underlying the elementary group $\gS$. We denote the corresponding coordinates $x=(v,w)\in V$ in the global chart, with $v\in V_0$ and $w\in V_1$. Let $\kq=\gR H\oplus V_0$ and $Q=\exp(\kq)$. The unitary induced representation associated to the co-adjoint orbit $\caO_\ee$ ($\ee=\pm1$) by the method of Kirillov has the form:
\begin{equation}
U_{\theta,\ee}(a,x,\ell)\varphi(a_0,v_0)= e^{\frac{i\ee}{\theta}\Big(e^{2(a-a_0)}\ell+\omega_0(\frac12e^{a-a_0}v-v_0,e^{a-a_0}w)\Big)}\varphi(a_0-a,v_0-e^{a-a_0}v)\label{eq-unitindrep}
\end{equation}
for $(a,x,\ell)\in \gS$, $\varphi\in L^2(Q)$, $(a_0,v_0)\in Q$ and $\theta\in\gR_+^*$. These representations $U_{\theta,\ee}:\gS\to\caL(\ehH)$ are unitary and irreducible, and the unitary dual is described by these two representations. A multiplier $\bfm$ is a function on $Q$. There is a particular multiplier:
\begin{equation}
\bfm_0(a,v)=2^{n+1}\sqrt{\cosh(2a)}\cosh(a)^n.\label{eq-multiplier}
\end{equation}
Let us define $\Sigma:=(s_{(0,0,0)}|_Q)^\ast$, where $s$ is the symmetric structure \eqref{eq-symstruct}:
\begin{equation}
\Sigma\varphi(a,v)=\varphi(-a,-v).\label{eq-sigma}
\end{equation}
Then, the Weyl-type quantization map is given by
\begin{multline}
\Omega_{\theta,\ee,\bfm_0}(a,x,\ell)\varphi(a_0,v_0):=U_{\theta,\ee}(a,x,\ell)\bfm_0\Sigma U_{\theta,\ee}(a,x,\ell)^{-1}\varphi(a_0,v_0)\\
=2^{n+1}\sqrt{\cosh(2(a-a_0))}\cosh(a-a_0)^n e^{\frac{2i\ee}{\theta}\Big(\sinh(2(a-a_0))\ell+\omega_0(\cosh(a-a_0)v-v_0,\cosh(a-a_0)w)\Big)}\\
\varphi(2a-a_0,2\cosh(a-a_0)v-v_0).\label{eq-qumap}
\end{multline}
$\Omega_{\theta,\ee,\bfm_0}(g)$ is a symmetric unbounded operator on $\ehH$, and $g\in \gS\simeq\caO_\ee$.

On smooth functions with compact support $f\in\caD(\caO_\ee)$, by denoting $\kappa:=\frac{1}{2^n(\pi\theta)^{n+1}}$, one has
\begin{equation*}
\Omega_{\theta,\ee,\bfm_0}(f):=\kappa\int_{\caO_\ee} f(g)\Omega_{\theta,\ee,\bfm_0}(g)\dd\mu(g)
\end{equation*}
with $\dd\mu(g)=\dd^Lg$ which corresponds to the Liouville measure of the KKS symplectic form on the co-adjoint orbit $\caO_\ee\simeq\gS$. Its extension is continuous and called the quantization map $\Omega_{\theta,\ee,\bfm_0}:L^2(\caO_\ee)\to\caL_{HS}(\ehH)$, with $\ehH:=L^2(Q)$ and $\caL_{HS}$ the Hilbert-Schmidt operators.
The normalization has been chosen such that $\Omega_{\theta,\ee,\bfm_0}(1)=\gone_\ehH$, understood in the distributional sense. Moreover, it is $\gS$-equivariant, because of
\begin{equation*}
\forall g,g_0\in \gS\quad:\quad \Omega_{\theta,\ee,\bfm_0}(g\fois g_0)=U_{\theta,\ee}(g)\Omega_{\theta,\ee,\bfm_0}(g_0)U_{\theta,\ee}(g)^{-1}.
\end{equation*}

The unitary representation $U_{\theta,\ee}:\gS\to\caL(\ehH)$ induces a resolution of the identity.
\begin{proposition}
By denoting the norm $\norm\varphi\norm^2_w:=\int_Q|\varphi(a,v)|^2e^{2(n+1)a}\dd a\dd v$ and $\varphi_g(q)=U_{\theta,\ee}(g)\varphi(q)$ for $g\in\gS$, $q\in Q$ and a non-zero $\varphi\in\ehH$, we have
\begin{equation*}
\frac{\kappa}{\norm\varphi\norm^2_w}\int_\gS |\varphi_g\rangle\langle\varphi_g|\dd^Lg=\gone_\ehH.
\end{equation*}
\end{proposition}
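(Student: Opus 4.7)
The plan is an explicit Fourier computation of the quadratic form induced by the integral, exploiting the linear appearance of the central variable $\ell\in\gR$ and the Lagrangian variable $w\in V_1$ in the phase of $U_{\theta,\ee}$.

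A convenient conceptual frame is the Schur-type observation: the sesquilinear form
$$B_\varphi(\psi_1,\psi_2):=\int_\gS\langle\psi_1,\varphi_g\rangle\langle\varphi_g,\psi_2\rangle\,\dd^L g,\qquad \psi_1,\psi_2\in\caD(Q),$$
is left $\gS$-invariant, i.e.\ $B_\varphi(U_{\theta,\ee}(h)\psi_1,U_{\theta,\ee}(h)\psi_2)=B_\varphi(\psi_1,\psi_2)$ for all $h\in\gS$, by left-invariance of $\dd^L g$ and the substitution $g\mapsto hg$. By irreducibility of $U_{\theta,\ee}$, once $B_\varphi$ is known to be bounded one automatically has $B_\varphi(\psi_1,\psi_2)=c(\varphi)\,\langle\psi_1,\psi_2\rangle$ for some $c(\varphi)\geq 0$, and the claim reduces to identifying $c(\varphi)=\|\varphi\|_w^2/\kappa$.

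To pin down this scalar I would evaluate $B_\varphi(\psi,\psi)=\int_\gS|\langle U_{\theta,\ee}(g)\varphi,\psi\rangle|^2\,\dd^L g$ for a fixed $\psi\in\caD(Q)$. Expanding the modulus squared as a double $Q$-integral in $(a_0,v_0),(a_0',v_0')$ and inspecting \eqref{eq-unitindrep}, the parameters $\ell$ and $w$ enter the accumulated phase only linearly. The left Haar measure in the global chart is $\dd^L g=\dd a\,\dd v\,\dd w\,\dd\ell$, as one checks from the Jacobian of $L_h$ read off \eqref{eq-gSlaw}. The $\dd\ell$-integration produces the Dirac mass $\pi\theta\,e^{-2(a-a_0)}\,\delta(a_0-a_0')$; after collapsing $a_0'=a_0$, the $\dd w$-integration over $V_1\simeq V_0^\ast$ (via the non-degenerate pairing $\omega_0$) yields $(2\pi\theta)^n\,e^{-n(a-a_0)}\,\delta(v_0-v_0')$.

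Collapsing the second delta and then changing variables to $(A,V):=(a_0-a,\,v_0-e^{a-a_0}v)$, all remaining $a$-dependent factors merge into the weight $e^{2(n+1)A}$ on the $\varphi$-factor and the integrals fully decouple:
$$B_\varphi(\psi,\psi)=\pi\theta\,(2\pi\theta)^n\,\|\varphi\|_w^2\,\|\psi\|^2 = 2^n(\pi\theta)^{n+1}\,\|\varphi\|_w^2\,\|\psi\|^2 = \kappa^{-1}\,\|\varphi\|_w^2\,\|\psi\|^2,$$
which identifies $c(\varphi)=\|\varphi\|_w^2/\kappa$ and gives the claim. The principal obstacle in executing this plan is the bookkeeping at the Fourier step: the weight $e^{2(n+1)A}$ coincides with $|\det\Ad(\exp(AH))|$, the modular factor of the non-unimodular group $\gS$, and must be assembled correctly from the Jacobians of the two $\delta$-collapses together with the final change of variables. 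This is precisely the concrete manifestation of the Duflo--Moore phenomenon for square-integrable irreducible representations of non-unimodular exponential Lie groups.
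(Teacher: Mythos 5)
Your argument is correct, and the constants check out: the $\ell$-integration gives $\pi\theta\,e^{-2(a-a_0)}\delta(a_0-a_0')$, the $w$-integration gives $(2\pi\theta)^n e^{-n(a-a_0)}\delta(v_0-v_0')$, and after the substitution $(A,V)=(a_0-a,\,v_0-e^{a-a_0}v)$ the Jacobian factor $e^{nA}$ combines with $e^{(n+2)A}$ to produce exactly the weight $e^{2(n+1)A}$ of $\norm\cdot\norm_w$, while $\pi\theta(2\pi\theta)^n=2^n(\pi\theta)^{n+1}=\kappa^{-1}$. The paper itself omits the proof, deferring to \cite{BG}; your route --- square-integrability plus Schur's lemma to reduce to identifying the Duflo--Moore constant, then a direct Fourier evaluation --- is the standard one for this resolution of the identity, and the explicit computation in fact already yields $B_\varphi(\psi_1,\psi_2)=\kappa^{-1}\norm\varphi\norm_w^2\langle\psi_1,\psi_2\rangle$ by polarization, so the Schur step is a helpful framing rather than a logical necessity.
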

This resolution of identity shows that the trace has the form
\begin{equation}
\tr(T)=\frac{\kappa}{\norm\varphi\norm^2_w}\int_\gS \langle\varphi_g, T\varphi_g\rangle\dd^L g\label{eq-tracesymb}
\end{equation}
for any trace-class operator $T\in\caL^1(\ehH)$.
\begin{theorem}
The symbol map, which is the left-inverse of the quantization map $\Omega_{\theta,\ee,\bfm_0}$ can be obtained via the formula
\begin{equation*}
\forall f\in L^2(\caO_\ee),\ \forall g\in\caO_\ee\quad:\quad \tr(\Omega_{\theta,\ee,\bfm_0}(f)\Omega_{\theta,\ee,\bfm_0}(g))=f(g),
\end{equation*}
where the trace is understood in the distributional sense in the variable $g\in \gS$.
\end{theorem}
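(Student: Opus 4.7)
The strategy is to reformulate the claim as a distributional identity on $\gS\times\gS$ and reduce it, via $\gS$-equivariance, to the $L^2$-isometry of the quantization map. Unfolding the definition of $\Omega_{\theta,\ee,\bfm_0}(f)$ as an operator-valued integral, the identity $\tr(\Omega_{\theta,\ee,\bfm_0}(f)\Omega_{\theta,\ee,\bfm_0}(g))=f(g)$ is equivalent to the distributional relation
$$
\kappa\,\tr\bigl(\Omega_{\theta,\ee,\bfm_0}(g_1)\,\Omega_{\theta,\ee,\bfm_0}(g_2)\bigr)\,\dd\mu(g_2)\;=\;\delta_{g_1}(g_2)
$$
on $\gS\times\gS$; this is the precise meaning of ``trace in the distributional sense'' in the statement.

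The plan is first to exploit the $\gS$-equivariance $\Omega_{\theta,\ee,\bfm_0}(gg_0)=U_{\theta,\ee}(g)\Omega_{\theta,\ee,\bfm_0}(g_0)U_{\theta,\ee}(g)^{-1}$ together with the cyclicity of the trace to observe that the left-hand side depends only on $g_1^{-1}g_2$. Combined with the left-invariance of $\dd\mu$ and the identity $\Omega_{\theta,\ee,\bfm_0}(e)=\bfm_0\Sigma$, the distributional relation is reduced, after pairing against an arbitrary test function $\phi\in\caD(\gS)$, to
$$
\tr\bigl(\bfm_0\Sigma\,\Omega_{\theta,\ee,\bfm_0}(\phi)\bigr)\;=\;\phi(e).
$$

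The main analytic input is the $L^2$-isometry property of the quantization map established in \cite{BG}, namely that $\Omega_{\theta,\ee,\bfm_0}$ extends to a bounded map $L^2(\caO_\ee)\to\caL_{HS}(\ehH)$ with $\|\Omega_{\theta,\ee,\bfm_0}(f)\|_{HS}^2=\kappa\|f\|_{L^2}^2$. Polarising and using the symmetry $\Omega_{\theta,\ee,\bfm_0}(\bar f)=\Omega_{\theta,\ee,\bfm_0}(f)^*$ (itself a consequence of the self-adjointness of $\Omega_{\theta,\ee,\bfm_0}(g)$ on its natural domain) one obtains $\tr(\Omega_{\theta,\ee,\bfm_0}(h)\Omega_{\theta,\ee,\bfm_0}(\phi))=\kappa\int h\,\phi\,\dd\mu$ for all $h,\phi\in L^2(\caO_\ee)$. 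Interchanging the $\phi$-integral with the trace via Fubini then yields the desired distributional formula with $h=f$.

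The main obstacle is calibrating the normalization constants. Concretely, one reads off from (\ref{eq-qumap}) the Schwartz kernel $K_\phi$ of $\Omega_{\theta,\ee,\bfm_0}(\phi)$, composes on the left with $\bfm_0\Sigma$ to obtain a kernel $(q,q')\mapsto\bfm_0(q)K_\phi(-q,q')$, and evaluates $\tr(\bfm_0\Sigma\,\Omega_{\theta,\ee,\bfm_0}(\phi))=\int_Q\bfm_0(q)K_\phi(-q,q)\,\dd q$. Solving the Dirac constraint in the $(a,v)$-variables of $g$ forces a substitution whose jacobian is cancelled precisely by the multiplier $\bfm_0$ defined in (\ref{eq-multiplier}), and what remains is a Fourier inversion in the $(w,\ell)$-variables on $\gR\times V_1$ whose prefactor matches $\kappa=1/(2^n(\pi\theta)^{n+1})$, producing $\phi(e)$. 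This exact cancellation is precisely what dictated the choice of $\bfm_0$ in \cite{BG}; tracking it rigorously is the only non-formal ingredient of the proof.
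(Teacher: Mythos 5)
Your argument is essentially correct, but be aware that the paper itself offers no proof of this statement: Section 3.1 imports it from \cite{BG} (``where the reader can find all the proofs''), and the only related proof in the paper is the one-line factorization of the trace over $\ehH_1\otimes\ehH_2$ for the normal-$j$-group analogue, which reduces precisely to the elementary statement you are proving. The computational tool the paper does set up for such traces is the coherent-state formula \eqref{eq-tracesymb} coming from the resolution of the identity, and that is how the analogous distributional trace is evaluated in Theorem \ref{thm-exprexp}; the expected route is thus a direct coherent-state computation rather than your reduction. Your route --- equivariance plus cyclicity to reduce everything to $\tr(\bfm_0\Sigma\,\Omega_{\theta,\ee,\bfm_0}(\phi))=\phi(e)$, then either the $L^2\to\caL_{HS}$ isometry or a kernel computation --- is valid, and the reduction to the unit element is a genuinely clean simplification that the paper does not exploit. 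One caveat on the first half: the polarized isometry $\tr(\Omega_{\theta,\ee,\bfm_0}(h)\Omega_{\theta,\ee,\bfm_0}(\phi))=\kappa\int h\phi\,\dd\mu$ \emph{is} the theorem (pair the claimed identity against $\phi$ and use $\Omega_{\theta,\ee,\bfm_0}(\phi)=\kappa\int\phi(g)\,\Omega_{\theta,\ee,\bfm_0}(g)\,\dd\mu(g)$), so quoting the isometry with the precise constant $\kappa$ from \cite{BG} makes that branch a restatement rather than a proof; the genuine content of your argument sits entirely in the final kernel computation of $\tr(\bfm_0\Sigma\,\Omega_{\theta,\ee,\bfm_0}(\phi))$, whose structure --- the Dirac constraints from \eqref{eq-qumap} forcing $a=0$ and $v=0$, the Jacobians cancelling against the multiplier \eqref{eq-multiplier}, and Fourier inversion in $(w,\ell)$ matching $\kappa=1/(2^n(\pi\theta)^{n+1})$ --- is the correct one and is indeed the calculation that dictates the choices of $\bfm_0$ and $\kappa$.
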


Then, the star-product is defined as
\begin{equation*}
(f_1\star_{\theta,} f_2)(g):=\tr(\Omega_{\theta,\ee,\bfm_0}(f_1)\Omega_{\theta,\ee,\bfm_0}(f_2)\Omega_{\theta,\ee,\bfm_0}(g))
\end{equation*}
for $f_1,f_2\in L^2(\caO_\ee)$ and $g\in\caO_\ee$, where we omitted the subscripts $\ee,\bfm_0$ for the star-product.

\begin{proposition}
\label{prop-quelem-starprod}
The star-product has the following expression:
\begin{equation}
(f_1\star_{\theta}f_2)(g)=\frac{1}{(\pi\theta)^{2n+2}}\int\ K_{\gS}(g,g_1,g_2)e^{-\frac{2i}{\theta}S_{\gS}(g,g_1,g_2)}f_1(g_1)f_2(g_2)\dd \mu(g_1)\dd \mu(g_2)\label{eq-starprod}
\end{equation}
where the amplitude and the phase are
\begin{align*}
K_{\gS}(g,g_1,g_2)=& 4\sqrt{\cosh(2(a_1-a_2))\cosh(2(a_1-a))\cosh(2(a-a_2))}\cosh(a_2-a)^{n}\\
&\cosh(a_1-a)^{n}\cosh(a_1-a_2)^{n},\\
\ee S_{\gS}(g,g_1,g_2)=&-\sinh(2(a_1-a_2))\ell-\sinh(2(a_2-a))\ell_1-\sinh(2(a-a_1))\ell_2\\
&+\cosh(a_1-a)\cosh(a_2-a)\omega_0(x_1,x_2) +\cosh(a_1-a)\cosh(a_1-a_2)\omega_0(x_2,x)\\
&+\cosh(a_1-a_2)\cosh(a_2-a)\omega_0(x,x_1).
\end{align*}
with $g_i=(a_i,x_i,\ell_i)\in\gS$. Moreover, $g\mapsto 1$ is the unit of this product, is is associative, $\gS$-invariant and satisfies the tracial identity:
\begin{equation}
\int f_1\star_\theta f_2=\int f_1\fois f_2.\label{eq-tracial1}
\end{equation}
\end{proposition}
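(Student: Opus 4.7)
The formula (3.8) together with the stated properties will follow essentially by direct computation starting from the explicit form of $\Omega_{\theta,\ee,\bfm_0}(g)$ given in (3.3). Writing $\Omega_{\theta,\ee,\bfm_0}(f_i)=\kappa\int f_i(g_i)\,\Omega_{\theta,\ee,\bfm_0}(g_i)\,\dd\mu(g_i)$ in the definition of the star-product gives
$$
(f_1\star_{\theta} f_2)(g)\;=\;\kappa^2\int\!\!\int f_1(g_1)f_2(g_2)\,E(g,g_1,g_2)\,\dd\mu(g_1)\dd\mu(g_2),
$$
where $E(g,g_1,g_2):=\tr\bigl(\Omega_{\theta,\ee,\bfm_0}(g_1)\,\Omega_{\theta,\ee,\bfm_0}(g_2)\,\Omega_{\theta,\ee,\bfm_0}(g)\bigr)$. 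The problem thus reduces to evaluating this three-point function in closed form; I will abbreviate $\Omega_{\theta,\ee,\bfm_0}$ as $\Omega$ in what follows.

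To compute $E$, I would first read off the Schwartz kernel of each $\Omega(a_i,x_i,\ell_i)$ on $L^2(Q)=L^2(\gR\times V_0)$ directly from (3.3): it is concentrated on the graph of the map $\sigma_{g_i}:(a_0,v_0)\mapsto(2a_i-a_0,\,2\cosh(a_i-a_0)v_i-v_0)$, weighted by the amplitude $2^{n+1}\sqrt{\cosh(2(a_i-a_0))}\cosh(a_i-a_0)^{n}$ and by the oscillating phase extracted from the exponent in (3.3). Composing three such kernels and integrating along the diagonal forces the closure condition $\sigma_g\circ\sigma_{g_2}\circ\sigma_{g_1}(q)=q$; because each $\sigma_{g_i}$ is an affine involution, this yields a linear system whose unique solution $(a_0^\ast,v_0^\ast)$ is determined by $(g,g_1,g_2)$, and whose Jacobian combines with the three amplitude factors to produce $K_{\gS}(g,g_1,g_2)$ up to a constant prefactor. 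Evaluating the three phases at $(a_0^\ast,v_0^\ast)$ and simplifying through standard hyperbolic addition identities should then yield exactly $-\tfrac{2}{\theta}S_{\gS}(g,g_1,g_2)$. This step is the main obstacle: although each piece is elementary, the bookkeeping of the hyperbolic identities and of the symplectic cross-terms $\omega_0(x_i,x_j)$ is heavy, and one must exploit the cyclic symmetry in $(g,g_1,g_2)$ to keep the computation tractable.

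The remaining assertions follow formally once one notes the key identity $\Omega(f_1\star_\theta f_2)=\Omega(f_1)\Omega(f_2)$, which is a direct reformulation of Theorem 3.1 (the symbol map is a left-inverse of $\Omega$). The constant function $1$ is the unit because $\Omega(1)=\gone_{\ehH}$, so $\tr(\Omega(f)\,\Omega(1)\,\Omega(g))=\tr(\Omega(f)\Omega(g))=f(g)$; associativity reduces to the associativity of operator composition inside $\tr(\Omega(f_1)\Omega(f_2)\Omega(f_3)\Omega(g))$; and $\gS$-invariance is a direct consequence of the $\gS$-equivariance of $\Omega$ (formula following (3.5)) combined with cyclicity of the trace. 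Finally, integrating the relation $\tr(\Omega(f_1)\Omega(g))=f_1(g)$ of Theorem 3.1 against $f_2$ gives $\tr(\Omega(f_1)\Omega(f_2))=\kappa\int f_1 f_2\,\dd\mu$; combining this with the identity $\int\Omega(g)\,\dd\mu(g)=\kappa^{-1}\gone_{\ehH}$ (an immediate consequence of $\Omega(1)=\gone_{\ehH}$) yields the tracial identity
$$
\int(f_1\star_\theta f_2)\,\dd\mu\;=\;\kappa^{-1}\tr(\Omega(f_1)\Omega(f_2))\;=\;\int f_1\fois f_2\,\dd\mu.
$$
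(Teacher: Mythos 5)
Your overall strategy is the right one, and it matches how this formula is actually established in the reference the paper defers to (the paper itself gives no proof of Proposition \ref{prop-quelem-starprod}; it imports it from Bieliavsky--Gayral): expand $\Omega_{\theta,\ee,\bfm_0}(f_i)$ under the trace, reduce to the three-point function $\tr(\Omega(g_1)\Omega(g_2)\Omega(g))$, and evaluate it by composing the three Schwartz kernels, each supported on the graph of the affine involution $\sigma_{g_i}$. The fixed-point analysis you describe is sound: the $a_0$-components compose to the reflection $a_0\mapsto 2(a_1-a_2+a)-a_0$, so the stationary set is a single point, and the Jacobian of $\mathrm{id}-(-\mathrm{id})$ contributes the power of $2$ that converts $\kappa^2\,2^{3(n+1)}$ into the prefactor $(\pi\theta)^{-(2n+2)}$ together with the explicit factor $4$ in $K_\gS$. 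Your arguments for the unit, associativity, invariance and traciality are also correct, with one caveat: the identity $\Omega(f_1\star_\theta f_2)=\Omega(f_1)\Omega(f_2)$ is not literally a reformulation of the left-inverse property of the symbol map; you also need that $\Omega$ inverts the symbol map on the range containing the products $\Omega(f_1)\Omega(f_2)$, i.e.\ that $\Omega$ is (up to normalization) a bijection onto Hilbert--Schmidt operators. That is part of the cited theory, but it should be invoked explicitly rather than folded into Theorem 3.1.

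The genuine gap is that the core of the proposition --- the explicit expressions for $K_\gS$ and $S_\gS$ --- is never actually derived: you stop at ``should then yield exactly $-\tfrac{2}{\theta}S_\gS$.'' Since everything else in the statement is soft, this computation is the entire nontrivial content of the proposition, and asserting that the hyperbolic bookkeeping closes is not a proof that it does; in particular the cross-terms $\omega_0(x_i,x_j)$ with their $\cosh$ coefficients are exactly where sign and normalization errors typically appear. If you want to avoid the three-kernel computation, the paper itself points to a shorter route: the product is obtained from the Moyal product by the explicit intertwiner $T_\theta$ of \eqref{eq-quelem-intertw}, i.e.\ $f_1\star_\theta f_2=T_\theta\big((T_\theta^{-1}f_1)\star_\theta^0(T_\theta^{-1}f_2)\big)$, and conjugating the (known) Moyal kernel by $T_\theta$ produces $K_\gS$ and $S_\gS$ by a more mechanical partial-Fourier computation. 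Either way, the identities have to be checked, not announced.
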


Note that this product has first been found \cite{Bieliavsky:2002} by intertwining the Moyal product:
\begin{multline*}
(f_1\star^0_\theta f_2)(a,x,\ell)=\frac{4}{(\pi\theta)^{2+2n}}\int\dd a_i\dd x_i\dd \ell_i\ f_1(a_1+a,x_1+x,\ell_1+\ell)\\
f_2(a_2+a,x_2+x,\ell_2+\ell) e^{-\frac{2i\ee}{\theta}(2a_1\ell_2-2a_2\ell_1+\omega_0(x_1,x_2))}
\end{multline*}
for $f_1,f_2\in L^2(\caO_\ee)$ and $\caO_\ee\simeq\gS\simeq\gR^{2n+2}$, which is $\ks$-covariant ($[\lambda_X,\lambda_Y]_{\star_\theta^0}=-i\theta \lambda_{[X,Y]}$) but not $\gS$-invariant. So for smooth functions with compact support, we have $f_1\star_{\theta}f_2= T_\theta((T_\theta^{-1}f_1)\star_\theta^0(T_\theta^{-1}f_2))$ with intertwiners:
\begin{align}
&T_\theta f(a,x,\ell)=\frac{1}{2\pi}\int\dd t\dd\xi\ \sqrt{\cosh(\frac{\theta t}{2})}\cosh(\frac{\theta t}{4})^n e^{\frac{2i}{\theta}\sinh(\frac{\theta t}{2})\ell-i\xi t}f(a,\cosh(\frac{\theta t}{4})x,\xi)\nonumber\\
&T^{-1}_\theta f(a,x,\ell)=\frac{1}{2\pi}\int\dd t\dd\xi\ \frac{\sqrt{\cosh(\frac{\theta t}{2})}}{\cosh(\frac{\theta t}{4})^n}e^{-\frac{2i}{\theta}\sinh(\frac{\theta t}{2})\xi+it\ell}f(a,\cosh(\frac{\theta t}{4})^{-1}x,\xi)\label{eq-quelem-intertw}
\end{align}
which will be useful in section \ref{subsec-other}.

\subsection{Quantization of normal j-groups}
\label{subsec-qunorm}

Let $G=G_1\ltimes \gS_2$ be a normal $j$-group, with notations as in section \ref{subsec-normal}. Taking into account its structure, the unitary representation $U$ and the quantization map $\Omega$ of this group (dependence in $\theta\in\gR_+^*$ will be omitted here in the subscripts) can be constructed from the ones $U_1$ and $\Omega_1$ of $G_1$ (obtained by recurrence) and the ones $U_2$ and $\Omega_2$ of $\gS_2$, given by \eqref{eq-unitindrep} and \eqref{eq-qumap} (without $\bfm_0$ for the moment).

Let $\ehH_i$ be the Hilbert space of the representation $U_i$, associated to a co-adjoint orbit $\caO_i$ (in the notations of Proposition \ref{prop-moment}). Since $U_2$ is irreducible and $\rho:G_1\to Sp(V_2)$, there exists a unique homomorphism $\caR:G_1\to\caL(\ehH_2)$ such that $\forall g_1\in G_1$, $\forall g_2\in \gS_2$,
\begin{equation*}
U_2(\rho(g_1)g_2)=\caR(g_1)U_2(g_2)\caR(g_1)^{-1}.
\end{equation*}
$\caR$ is actually a metaplectic-type representation associated to $U_2$ and $\rho$. The matrix $\rho(g_1)$, with smooth coefficients in $g_1$, is of the form
\begin{equation*}
\rho(g_1)=\begin{pmatrix} \rho_+(g_1)&0 \\ \rho_-(g_1)& (\rho_+(g_1)^T)^{-1} \end{pmatrix}
\end{equation*}
with $\rho_-(g_1)^T\rho_+(g_1)=\rho_+(g_1)^T\rho_-(g_1)$.
\begin{proposition}
\label{prop-metaplec}
The map $\caR:G_1\to\caL(\ehH_2)$ is given by $\forall g_1\in G_1$, $\forall\varphi\in\ehH_2$ non-zero,
\begin{equation*}
\caR(g_1)\varphi(a_0,v_0)=\frac{1}{|\det(\rho_+(g_1))|^{\frac12}}e^{-\frac{i\ee_2}{2\theta}v_0\rho_-(g_1)\rho_+(g_1)^{-1}v_0}\varphi(a_0,\rho_+(g_1)^{-1}v_0)
\end{equation*}
and is unitary, where the sign $\ee_2=\pm1$ determines the choice of the co-adjoint orbit $\caO_2$ and the associated irreducible representation $U_2$.
\end{proposition}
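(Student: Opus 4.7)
The plan is to establish the proposition in four steps: existence and essential uniqueness via Schur's lemma, direct verification of the explicit formula, unitarity, and the homomorphism property.

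First, I would observe that the existence and essential uniqueness of an intertwiner is a consequence of Schur's lemma combined with the Stone--von Neumann theorem. Since $\rho(g_1)\in Sp(V_2,\omega_0)$ acts trivially on the central generator $E_2$ of the Heisenberg subalgebra $V_2\oplus\gR E_2$, the two irreducible representations $U_2$ and $U_2\circ\rho(g_1)$ of $\gS_2$ restrict to the same central character. Stone--von Neumann (extended from the Heisenberg factor to the full elementary group $\gS_2$ using that the unitary dual of $\gS_2$ is exhausted by the two representations $U_{\theta,\pm}$) then guarantees a unitary equivalence, unique up to a phase. This already reduces the problem to exhibiting an explicit candidate, confirming it intertwines, and fixing the phase.

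Second, I would verify by direct computation that the proposed formula satisfies the intertwining relation $U_2(\rho(g_1)g_2)\caR(g_1)\varphi=\caR(g_1)U_2(g_2)\varphi$. Inserting the definition \eqref{eq-unitindrep} of $U_2$ on both sides, and using that the lower-triangular block form of $\rho(g_1)$ keeps $V_{2,0}$ invariant while $V_{2,1}$ receives an additive correction from $\rho_-$, both sides reduce to the same translated argument $\varphi(a_0-a_2,\ldots)$ times an exponential. Matching the exponential phases then hinges on the symplectic identity $\rho_-^T\rho_+=\rho_+^T\rho_-$, which forces $\rho_-\rho_+^{-1}$ to be symmetric and makes the cross-terms among $v_2$, $w_2$ and $v_0$ combine correctly.

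Third, unitarity is a routine change of variables: since $\rho_-\rho_+^{-1}$ is real symmetric, the Gaussian phase $e^{-\frac{i\ee_2}{2\theta}v_0\rho_-\rho_+^{-1}v_0}$ is unimodular, and substituting $v_0'=\rho_+(g_1)^{-1}v_0$ in $\int|\caR(g_1)\varphi(a_0,v_0)|^2\,\dd a_0\,\dd v_0$ yields a Jacobian $|\det\rho_+(g_1)|$ which exactly cancels the prefactor $|\det\rho_+(g_1)|^{-1}$ produced by squaring $|\det\rho_+(g_1)|^{-1/2}$.

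Fourth, the homomorphism property follows by combining Schur's lemma with an explicit check. Both $\caR(g_1g_1')$ and $\caR(g_1)\caR(g_1')$ intertwine $U_2$ with $U_2\circ\rho(g_1g_1')$, so by uniqueness they differ by a unimodular scalar. Composing the two explicit Gaussian operators and using the block multiplication $\rho(g_1g_1')=\rho(g_1)\rho(g_1')$, one expands $\rho_+(g_1g_1')=\rho_+(g_1)\rho_+(g_1')$ and $\rho_-(g_1g_1')\rho_+(g_1g_1')^{-1}$ in terms of the factors; a brief matrix manipulation shows that the sum of the two Gaussian exponents produced by $\caR(g_1)\caR(g_1')$ coincides with the single Gaussian exponent in $\caR(g_1g_1')$, so the scalar is $1$.

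The main technical obstacle I expect is the second step: tracking the cross-terms between the Lagrangian coordinates $v_2,w_2$ after applying the symplectic transformation $\rho(g_1)$ to $V_2$, and matching them against those produced by conjugating $U_2(g_2)$ by the Gaussian operator $\caR(g_1)$. The rest is essentially bookkeeping and a classical metaplectic identity for symplectic matrices that preserve a Lagrangian subspace.
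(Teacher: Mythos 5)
Your proposal is correct. Note that the paper itself does not prove this proposition: Section 3 opens by deferring all proofs to the memoir \cite{BG}, so there is no internal argument to compare against. Your route is the natural one, and the computations go through: writing $\rho(g_1)(v,w)=(\rho_+v,\;\rho_-v+(\rho_+^T)^{-1}w)$ and inserting \eqref{eq-unitindrep} on both sides of $U_2(\rho(g_1)g_2)\caR(g_1)=\caR(g_1)U_2(g_2)$, the cross-terms cancel precisely because $\rho_-\rho_+^{-1}$ is symmetric (equivalent to $\rho_+^T\rho_-=\rho_-^T\rho_+$), which is the identity you single out; the Jacobian $|\det\rho_+|$ cancels the squared prefactor for unitarity; and the cocycle identity $\rho_-(g_1g_1')\rho_+(g_1g_1')^{-1}=\rho_-(g_1)\rho_+(g_1)^{-1}+(\rho_+(g_1)^T)^{-1}\rho_-(g_1')\rho_+(g_1')^{-1}\rho_+(g_1)^{-1}$ makes the two Gaussian exponents add up exactly, so the scalar in your fourth step is indeed $1$ with no phase ambiguity left to fix. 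One small caveat on your first step: the restriction of $U_{\theta,\ee}$ to the centre $\exp(\gR E_2)$ acts by $e^{i\ee e^{-2a_0}\ell/\theta}$, which depends on the fibre variable $a_0$ and is therefore not a genuine central character; the clean justification is the one you also invoke, namely that the unitary dual of $\gS_2$ is exhausted by $U_{\theta,\pm}$ and that $\rho(g_1)$ fixes the $(a,\ell)$ coordinates, hence preserves the sign $\ee_2$. In any case that step only re-derives the existence statement the paper asserts before the proposition, and your explicit verification in steps two through four is self-contained without it.
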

The expression
\begin{equation*}
U(g)\varphi:=U_1(g_1)\varphi_1\,\otimes\,U_2(g_2)\caR(g_1)\varphi_2
\end{equation*}
for $g=(g_1,g_2)\in G$, $\varphi=\varphi_1\otimes\varphi_2\in\ehH:=\ehH_1\otimes\ehH_2$, defines a unitary representation $U:G\to\caL(\ehH)$. Let $\Sigma=\Sigma_1\otimes\Sigma_2$, with $\Sigma_2$ given in \eqref{eq-sigma}. Then, the quantization map is defined as
\begin{equation*}
\Omega(g):=U(g)\circ \Sigma\circ U(g)^{-1}.
\end{equation*}
Using the definition of $U$ and $\caR$ together with the property (see Proposition 6.55 in \cite{BG}):
\begin{equation*}
\caR(g_1)\Sigma_2\caR(g_1)^{-1}=\Sigma_2,
\end{equation*}
it is easy to check that
\begin{equation*}
\Omega((g_1,g_2))=\Omega_1(g_1)\otimes\Omega_2(g_2),
\end{equation*}
with $(g_1,g_2)\in G$ and $\Omega_i(g_i)=U_i(g_i)\circ \Sigma_i\circ U_i(g_i)^{-1}$. Using the identification $\caO\simeq G$ (see Proposition \ref{prop-moment}), we see that $\Omega$ is defined on $\caO$ and it is again $G$-equivariant: for $g\in G$ and $g'\in \caO$,
\begin{equation*}
\Omega(g\fois g')=U(g)\Omega(g')U(g)^{-1}.
\end{equation*}
In the same way, if $\bfm_0:=\bfm_0^1\otimes\bfm_0^2$, where $\bfm_0^2$ is given by \eqref{eq-multiplier}, we also have $\Omega_{\bfm_0}((g_1,g_2))=\Omega_{1,\bfm_0^1}(g_1)\otimes \Omega_{2,\bfm_0^2}(g_2)$. The quantization map of functions $f\in \caD(\caO)$ has then the form
\begin{equation*}
\Omega_{\bfm_0}(f):=\kappa\int_\caO f(g)\Omega_{\bfm_0}(g)\dd\mu(g)
\end{equation*}
where $\dd\mu(g):=\dd\mu_1(g_1)\dd\mu_2(g_2)=\dd^Lg_1\dd^Lg_2$ is the Liouville measure of the KKS symplectic form on the co-adjoint orbit $\caO\simeq G$; $\kappa=\kappa_1\kappa_2$, for $G=G_1\ltimes\gS_2$, is defined recursively with $\kappa_2=\frac{1}{2^{n_2}(\pi\theta)^{n_2+1}}$ and $\dim(\gS_2)=2n_2+2$ like in section \ref{subsec-quelem}. We then have $\Omega_{\bfm_0}(1)=\gone$.

Note that the left-invariant measure for the group $G=G_1\ltimes \gS_2$ has the form
\begin{equation*}
\dd^Lg=\dd^Lg_1\dd^Lg_2
\end{equation*}
which corresponds to the Liouville measure $\dd\mu(g)$, like the elementary case. As in section \ref{subsec-quelem}, the unitary representation $U:G\to\caL(\ehH)$ induces a resolution of the identity.
\begin{proposition}
\label{prop-qunorm-resol}
By denoting the norm $\norm\varphi\norm^2_w:=\norm\varphi_1\norm^2_w \norm\varphi_2\norm^2_w$ for $\varphi=\varphi_1\otimes\varphi_2\in\ehH$ non-zero, we have
\begin{equation*}
\frac{\kappa}{\norm\varphi\norm^2_w}\int_G |U(g)\varphi\rangle\langle U(g)\varphi|\dd^Lg=\gone_\ehH.
\end{equation*}
\end{proposition}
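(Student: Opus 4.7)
The plan is to proceed by induction on the number $N$ of elementary factors in the decomposition $G = \big(\dots(\mathbb{S}_1 \ltimes_{\rho_1} \mathbb{S}_2) \ltimes \dots\big) \ltimes_{\rho_{N-1}} \mathbb{S}_N$ alluded to in Remark \ref{rmk-decomp}. The base case $N=1$ is the elementary resolution of identity already stated in Section \ref{subsec-quelem}. For the induction step, write $G = G_1 \ltimes \mathbb{S}_2$, suppose the resolution holds for $G_1$, and exploit the facts that the left-invariant measure factorizes as $d^L g = d^L g_1\, d^L g_2$, that $\mathcal{H} = \mathcal{H}_1 \otimes \mathcal{H}_2$, that $\kappa = \kappa_1 \kappa_2$, and that
$$
U(g)\varphi = U_1(g_1)\varphi_1 \,\otimes\, U_2(g_2)\mathcal{R}(g_1)\varphi_2
$$
for $\varphi = \varphi_1 \otimes \varphi_2$. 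Consequently
$$
|U(g)\varphi\rangle\langle U(g)\varphi| = |U_1(g_1)\varphi_1\rangle\langle U_1(g_1)\varphi_1| \,\otimes\, |U_2(g_2)\mathcal{R}(g_1)\varphi_2\rangle\langle U_2(g_2)\mathcal{R}(g_1)\varphi_2|.
$$

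I would then carry out the $g_2$-integration first. Since $\mathcal{R}(g_1)\varphi_2$ is a non-zero vector in $\mathcal{H}_2$, the elementary resolution applied to $\psi = \mathcal{R}(g_1)\varphi_2$ gives
$$
\kappa_2 \int_{\mathbb{S}_2} |U_2(g_2)\mathcal{R}(g_1)\varphi_2\rangle\langle U_2(g_2)\mathcal{R}(g_1)\varphi_2| \, d^L g_2 = \|\mathcal{R}(g_1)\varphi_2\|^2_w \,\mathbb{1}_{\mathcal{H}_2}.
$$
The next key point is to show that $\mathcal{R}(g_1)$ preserves the weighted norm, i.e.\ $\|\mathcal{R}(g_1)\varphi_2\|_w = \|\varphi_2\|_w$. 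Using the explicit formula of Proposition \ref{prop-metaplec}, the unimodular phase drops out in modulus, leaving a factor $|\det \rho_+(g_1)|^{-1}$, which is exactly compensated by the Jacobian of the linear change of variables $v_0 \mapsto \rho_+(g_1) v_0$ in the $V_0$-integral defining $\|\cdot\|_w$; the weight $e^{2(n+1)a_0}$ is untouched since $\mathcal{R}(g_1)$ does not act on $a_0$. This is the only computation that is not purely formal and is the most delicate point.

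With this norm-preservation in hand, the inner integral is $(\|\varphi_2\|_w^2/\kappa_2)\,\mathbb{1}_{\mathcal{H}_2}$, independent of $g_1$, and the remaining operator factorizes as
$$
\frac{\kappa_1 \|\varphi_2\|_w^2}{\|\varphi\|_w^2} \int_{G_1} |U_1(g_1)\varphi_1\rangle\langle U_1(g_1)\varphi_1|\, d^L g_1 \,\otimes\, \mathbb{1}_{\mathcal{H}_2}.
$$
Applying the induction hypothesis to $G_1$ yields $\|\varphi_1\|_w^2\, \mathbb{1}_{\mathcal{H}_1}$, and since $\|\varphi\|_w^2 = \|\varphi_1\|_w^2 \|\varphi_2\|_w^2$, the whole expression collapses to $\mathbb{1}_{\mathcal{H}_1} \otimes \mathbb{1}_{\mathcal{H}_2} = \mathbb{1}_{\mathcal{H}}$, completing the induction. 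The main obstacle is thus the invariance $\|\mathcal{R}(g_1)\varphi_2\|_w = \|\varphi_2\|_w$; everything else is a straightforward factorization argument enabled by the tensor-product structure of $U$, $\Omega$, $\kappa$, and the measure.
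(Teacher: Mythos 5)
Your proof is correct, and it follows the route the paper itself implicitly relies on (the paper states this proposition without proof, deferring to [BG], and the factorization used in deriving the trace formula right after it is exactly your argument). The one genuinely non-formal step --- that $\caR(g_1)$ preserves the weighted norm $\norm\cdot\norm_w$ because the unimodular phase drops out, the Jacobian of $v_0\mapsto\rho_+(g_1)v_0$ cancels the prefactor $|\det\rho_+(g_1)|^{-1}$, and the weight $e^{2(n+1)a_0}$ is untouched --- is correctly identified and checks out against the explicit formula of Proposition \ref{prop-metaplec}.
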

This resolution of identity shows that the trace has the form
\begin{equation}
\tr(T)=\frac{\kappa}{\norm\varphi\norm^2_w}\int_G \langle U(g)\varphi, TU(g)\varphi\rangle\dd^L g\label{eq-tracesymbnorm}
\end{equation}
for $T\in\caL^1(\ehH)$. In particular, for $T=T_1\otimes T_2$ with $T_i\in\caL^1(\ehH_i)$, one has
\begin{multline}
\tr(T)=\frac{\kappa}{\norm\varphi\norm^2_w}\int_G \langle U_1(g_1)\varphi_1, T_1U_1(g_1)\varphi_1\rangle \langle U_2(g_2)\caR(g_1)\varphi_2, T_2U_2(g_2)\caR(g_1)\varphi_2\rangle\\
\dd^L g_1\dd^Lg_2=\tr(T_1)\tr(T_2).\label{eq-qunorm-trace}
\end{multline}
\begin{theorem}
The symbol map, which is the left-inverse of the quantization map $\Omega_{\bfm_0}$ can be obtained via the formula
\begin{equation*}
\forall f\in L^2(\caO),\ \forall g\in \caO\quad:\quad \tr\Big(\Omega_{\bfm_0}(f)\Omega_{\bfm_0}(g)\Big)=f(g).
\end{equation*}
\end{theorem}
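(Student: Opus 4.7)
The plan is to reduce the statement to the elementary case (already established in Section~\ref{subsec-quelem}) by induction on the number of elementary factors in the decomposition of Remark~\ref{rmk-decomp}, using in a crucial way the tensor-product structure of the quantizer and the multiplicativity of the trace on product operators. Throughout, the trace is understood in the distributional sense in the variable $g\in\caO$, exactly as in the elementary theorem.

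First, I would verify the assertion on the dense subspace of product functions. If $f=f_1\otimes f_2$ with $f_i\in\caD(\caO_i)$, then writing $g=(g_1,g_2)$ and using $\kappa=\kappa_1\kappa_2$, $\dd\mu(g)=\dd\mu_1(g_1)\,\dd\mu_2(g_2)$, together with the factorization
\begin{equation*}
\Omega_{\bfm_0}\bigl((g_1,g_2)\bigr)=\Omega_{1,\bfm_0^1}(g_1)\otimes\Omega_{2,\bfm_0^2}(g_2)
\end{equation*}
established at the end of Section~\ref{subsec-qunorm}, one obtains by Fubini that
\begin{equation*}
\Omega_{\bfm_0}(f_1\otimes f_2)\;=\;\Omega_{1,\bfm_0^1}(f_1)\,\otimes\,\Omega_{2,\bfm_0^2}(f_2).
\end{equation*}
Then for each fixed $g=(g_1,g_2)\in\caO$, the operator product $\Omega_{\bfm_0}(f)\Omega_{\bfm_0}(g)$ splits as a tensor product as well, and applying the trace-factorization identity \eqref{eq-qunorm-trace} yields
\begin{equation*}
\tr\bigl(\Omega_{\bfm_0}(f)\Omega_{\bfm_0}(g)\bigr)
\;=\;\tr\bigl(\Omega_{1,\bfm_0^1}(f_1)\Omega_{1,\bfm_0^1}(g_1)\bigr)\cdot
\tr\bigl(\Omega_{2,\bfm_0^2}(f_2)\Omega_{2,\bfm_0^2}(g_2)\bigr).
\end{equation*}

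Second, I would invoke induction: the factor corresponding to $\gS_2$ is handled by the elementary theorem of Section~\ref{subsec-quelem}, while the factor corresponding to $G_1$ is handled by the inductive hypothesis (the base case being an elementary normal $j$-group). Each factor equals $f_i(g_i)$, so the product equals $f_1(g_1)f_2(g_2)=f(g)$, as desired.

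Finally, to pass from product symbols to arbitrary $f\in L^2(\caO)$, I would use the density of $\caD(\caO_1)\otimes\caD(\caO_2)$ in $L^2(\caO)\cong L^2(\caO_1)\widehat{\otimes}L^2(\caO_2)$ together with the fact that $\Omega_{\bfm_0}:L^2(\caO)\to\caL_{HS}(\ehH)$ is continuous (as the tensor product of two continuous quantizers) and with the Hilbert--Schmidt continuity of $T\mapsto\tr(T\,\Omega_{\bfm_0}(g))$ in the distributional sense on $\caO$. The expected main obstacle is precisely this last point: the operators $\Omega_{\bfm_0}(g)$ are only symmetric unbounded, so $\tr(\Omega_{\bfm_0}(f)\Omega_{\bfm_0}(g))$ has to be interpreted as a tempered distribution in $g$, and one must check that the factorization of this distributional trace through the recursive construction of $\Omega_{\bfm_0}$ is justified--this follows from the resolution of identity of Proposition~\ref{prop-qunorm-resol} together with Fubini applied to the double integral defining the trace in \eqref{eq-tracesymbnorm}, once one unpacks $U(g)\varphi=U_1(g_1)\varphi_1\otimes U_2(g_2)\caR(g_1)\varphi_2$ and uses the unitarity of $\caR(g_1)$ to eliminate the metaplectic cocycle from the $g_2$-integral.
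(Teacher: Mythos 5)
Your proposal is correct and follows essentially the same route as the paper: the paper's own (very terse) proof is precisely the weak-sense computation $\tr(\Omega_{\bfm_0}(f)\Omega_{\bfm_0}(g))=\kappa\int f(g_1',g_2')\tr(\Omega_1(g_1')\Omega_1(g_1))\tr(\Omega_2(g_2')\Omega_2(g_2))\dd\mu_1(g_1')\dd\mu_2(g_2')=f(g)$, i.e.\ the tensor factorization of the quantizer combined with the trace multiplicativity \eqref{eq-qunorm-trace} and the elementary (inductive) case applied to each factor. Your additional care about product symbols, density in $L^2(\caO)$, and the distributional interpretation of the trace only makes explicit what the paper leaves implicit.
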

\begin{proof}
Abstractly (in a weak sense), we have
\begin{multline*}
\tr(\Omega_{\bfm_0}(f)\Omega_{\bfm_0}(g))=\kappa\int f(g_1',g_2')\tr(\Omega_1(g_1')\Omega_1(g_1)) \tr(\Omega_2(g_2')\Omega_2(g_2))\dd\mu_1(g_1')\dd\mu_2(g_2')\\
=f(g_1,g_2).
\end{multline*}
\end{proof}

The star-product is defined as
\begin{equation*}
(f_1\star_{\theta} f_2)(g):=\tr(\Omega_{\bfm_0}(f_1)\Omega_{\bfm_0}(f_2)\Omega_{\bfm_0}(g))
\end{equation*}
for $f_1,f_2\in L^2(\caO)$ and $g\in \caO$.
\begin{proposition}
\label{prop-qunorm-starprod}
The star-product has the following expression:
\begin{equation}
(f_1\star_{\theta}f_2)(g)=\frac{1}{(\pi\theta)^{\dim(G)}}\int_{G\times G}\ K_{G}(g,g',g'')e^{-\frac{2i}{\theta}S_{G}(g,g',g'')}f_1(g')f_2(g'')\dd \mu(g')\dd \mu(g'')\label{eq-starprodnorm}
\end{equation}
where the amplitude and the phase are
\begin{align*}
K_{G}(g,g',g'')=& K_{G_1}(g_1,g_1',g_1'')K_{\gS_2}(g_2,g_2',g_2''),\\
S_{G}(g,g',g'')=& S_{G_1}(g_1,g_1',g_1'')+S_{\gS_2}(g_2,g_2',g_2'').
\end{align*}
with $g=(g_1,g_2)\in \caO=\caO_1\times \caO_2$ due to \eqref{SYMPL}. There is also a tracial identity:
\begin{equation*}
\int_\caO (f_1\star_\theta f_2)(g)\dd\mu(g)=\int_\caO f_1(g)f_2(g)\dd\mu(g).
\end{equation*}
\end{proposition}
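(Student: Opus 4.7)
My plan is to exploit the tensor-product factorization $\Omega_{\bfm_0}((g_1,g_2))=\Omega_{1,\bfm_0^1}(g_1)\otimes\Omega_{2,\bfm_0^2}(g_2)$ established in Section~\ref{subsec-qunorm} together with the multiplicativity of the trace on tensor products from \eqref{eq-qunorm-trace}. Once the three-point trace $\tr(\Omega(g')\Omega(g'')\Omega(g))$ splits as a product, the computation reduces to the elementary case (Proposition~\ref{prop-quelem-starprod}) via induction on the number $N$ of elementary factors in the decomposition $G=(\cdots(\gS_1\ltimes\gS_2)\ltimes\cdots)\ltimes\gS_N$ of Remark~\ref{rmk-decomp}.

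\textbf{Main computation.} Substituting the integral expression for $\Omega_{\bfm_0}(f_i)$ into the definition $(f_1\star_\theta f_2)(g)=\tr(\Omega_{\bfm_0}(f_1)\Omega_{\bfm_0}(f_2)\Omega_{\bfm_0}(g))$ yields
\begin{equation*}
(f_1\star_\theta f_2)(g)=\kappa^2\int f_1(g')f_2(g'')\,\tr\bigl(\Omega_{\bfm_0}(g')\Omega_{\bfm_0}(g'')\Omega_{\bfm_0}(g)\bigr)\,\dd\mu(g')\dd\mu(g'').
\end{equation*}
Writing $g=(g_1,g_2)$, $g'=(g_1',g_2')$, $g''=(g_1'',g_2'')$, and using that $(A_1\otimes A_2)(B_1\otimes B_2)(C_1\otimes C_2)=A_1B_1C_1\otimes A_2B_2C_2$ together with \eqref{eq-qunorm-trace}, one gets
\begin{equation*}
\tr\bigl(\Omega_{\bfm_0}(g')\Omega_{\bfm_0}(g'')\Omega_{\bfm_0}(g)\bigr)=\tr\bigl(\Omega_1(g_1')\Omega_1(g_1'')\Omega_1(g_1)\bigr)\cdot\tr\bigl(\Omega_2(g_2')\Omega_2(g_2'')\Omega_2(g_2)\bigr).
\end{equation*}
By Proposition~\ref{prop-quelem-starprod}, the $\gS_2$-factor recombined with $\kappa_2^2$ is precisely the elementary amplitude $(\pi\theta)^{-\dim\gS_2}K_{\gS_2}e^{-\frac{2i}{\theta}S_{\gS_2}}$, while by induction the $G_1$-factor combined with $\kappa_1^2$ produces $(\pi\theta)^{-\dim G_1}K_{G_1}e^{-\frac{2i}{\theta}S_{G_1}}$. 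Using $\kappa=\kappa_1\kappa_2$, $\dd\mu=\dd\mu_1\dd\mu_2$, and $\dim G=\dim G_1+\dim\gS_2$, these assemble into the stated product form \eqref{eq-starprodnorm}.

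\textbf{Tracial identity.} This is a formal consequence of the weak symbol identity $f(g)=\tr(\Omega_{\bfm_0}(f)\Omega_{\bfm_0}(g))$ together with $\kappa\int_\caO\Omega_{\bfm_0}(g)\,\dd\mu(g)=\gone$, both interpreted distributionally. Indeed, by cyclicity of the trace,
\begin{equation*}
\int_\caO(f_1\star_\theta f_2)(g)\,\dd\mu(g)=\kappa^{-1}\tr\bigl(\Omega_{\bfm_0}(f_1)\Omega_{\bfm_0}(f_2)\bigr),
\end{equation*}
and applying the symbol identity once more to rewrite the right-hand side yields $\int_\caO f_1(g)f_2(g)\,\dd\mu(g)$.

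\textbf{Main obstacle.} The genuine technical content lies not in the algebra, which is a clean tensor-product factorization and induction, but in the weak/distributional manipulations: justifying the interchange of trace and integration in the definition of $\star_\theta$, and the meaning of $\kappa\int\Omega_{\bfm_0}(g)\,\dd\mu(g)=\gone$ beyond $\caD(\caO)$. These points are handled via the Hilbert-Schmidt nature of the quantization map established in \cite{BG}; granting them, the proof is essentially the symbolic calculation above.
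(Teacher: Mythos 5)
Your proof is correct: the paper actually omits the proof of this proposition, but your argument --- the tensor factorization $\Omega_{\bfm_0}((g_1,g_2))=\Omega_{1,\bfm_0^1}(g_1)\otimes\Omega_{2,\bfm_0^2}(g_2)$, the multiplicativity of the trace from \eqref{eq-qunorm-trace}, and induction on the elementary factors to recombine $\kappa_1^2\kappa_2^2$ with the two three-point traces into $(\pi\theta)^{-\dim G}K_Ge^{-\frac{2i}{\theta}S_G}$ --- is exactly the computation implicit in the text and mirrors the weak-sense argument the paper does write out for the adjacent symbol-map theorem. The tracial identity via $\kappa\int_\caO\Omega_{\bfm_0}(g)\,\dd\mu(g)=\gone$ and the symbol identity is likewise the intended derivation, with the distributional justifications deferred to \cite{BG} as you note.
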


\subsection{Computation of the star-exponential}
\label{subsec-comput}

\begin{definition}
\label{def-comput-starexp}
We define the {\defin star-exponential} associated to the deformation quantization $(\star,\Omega)$ of section \ref{subsec-qunorm} as
\begin{equation*}
\forall g\in G,\,\forall g'\in \caO\simeq G\quad:\quad \caE^\caO_g(g')=\tr(U(g)\Omega_{\bfm_0}(g'))
\end{equation*}
where the trace has to be understood in the distributional sense in $(g,g')\in G\times \caO$.
\end{definition}

By using computation rules of the above sections, we can obtain recursively on the number of factors of the normal $j$-group $G=G_1\ltimes \gS_2$ with corresponding co-adjoint orbit $\caO\simeq\caO_1\times\caO_2$, the expression of the star-exponential $\caE^\caO\in\caD'(G\times \caO)$.
\begin{theorem}
\label{thm-exprexp}
We have $\forall g,g'\in G$,
\begin{multline}
\caE^\caO_g(g')=\caE^{\caO_1}_{g_1}(g'_1) \frac{2^{n_2}|\det(\rho_+(g_1))|^{\frac12}\sqrt{\cosh(a_2)}\cosh(\frac{a_2}{2})^{n_2}}{|\det(1+\rho_+(g_1))|}\exp(\frac{i\ee_2}{\theta}\Big[ 2\sinh(a_2)\ell'_2\\
+e^{a_2-2a_2'}\ell_2+e^{\frac{a_2}{2}-a_2'}\cosh(\frac{a_2}{2})\omega_0(x_2,x_2')+\frac12(\tilde x)^TM_\rho(g_1) \tilde x\Big]),\label{eq-comput-starexp2}
\end{multline}
where
\begin{equation*}
M_\rho(g_1):=\begin{pmatrix} -B_\rho & C_\rho^T  \\  C_\rho & 0 \end{pmatrix},\qquad
\tilde x:= e^{\frac{a_2}{2}-a_2'}x_2-2\cosh(\frac{a_2}{2})x_2'
\end{equation*}
and with $B_\rho=(1+\rho_+^T(g_1))^{-1}\rho_+^T(g_1)\rho_-(g_1)(1+\rho_+(g_1))^{-1}$, and $C_\rho=\frac12 (\rho_+(g_1)-1)(\rho_+(g_1)+1)^{-1}$, $g=(g_1,g_2)$, $g_2=(a_2,x_2,\ell_2)\in\gS_2$, and $\caE^{\caO_1}$ the star-exponential of the normal $j$-group $G_1$.
\end{theorem}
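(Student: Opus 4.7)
My plan is to exploit the tensor-product structure of $U$ and $\Omega_{\bfm_0}$ coming from the semidirect decomposition $G = G_1 \ltimes \gS_2$, then reduce the theorem to one explicit trace computation on $\ehH_2 = L^2(Q_2)$. From the definitions in section \ref{subsec-qunorm} we have
\[
U(g)\,\Omega_{\bfm_0}(g') \;=\; \bigl(U_1(g_1)\,\Omega_{1,\bfm_0^1}(g_1')\bigr) \;\otimes\; \bigl(U_2(g_2)\,\caR(g_1)\,\Omega_{2,\bfm_0^2}(g_2')\bigr),
\]
so the factorization \eqref{eq-qunorm-trace} of the trace, combined with Definition \ref{def-comput-starexp} applied to $G_1$, gives
\[
\caE^\caO_g(g') \;=\; \caE^{\caO_1}_{g_1}(g_1')\,\cdot\,\tr\bigl(U_2(g_2)\,\caR(g_1)\,\Omega_{2,\bfm_0^2}(g_2')\bigr).
\]
It only remains to verify that the second factor matches the bracketed expression in \eqref{eq-comput-starexp2}; iterating this identity on the number of factors of $G$ then yields the full formula.

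Next I would write this operator as a kernel on $L^2(Q_2)$ using \eqref{eq-unitindrep}, Proposition \ref{prop-metaplec} and \eqref{eq-qumap}. Composing from right to left, it sends $\varphi(a_0, v_0)$ to
\[
F(a_0, v_0)\,\varphi\bigl(2a_2' - a_0 + a_2,\; 2\cosh(a_2' - a_0 + a_2)\,v_2' - \rho_+(g_1)^{-1}(v_0 - e^{a_2 - a_0} v_2)\bigr),
\]
where $F(a_0, v_0)$ gathers the multiplier $\bfm_0^2$, the prefactor $|\det \rho_+(g_1)|^{-1/2}$ from $\caR(g_1)$, and the three exponential phases contributed by $U_2(g_2)$, by $\caR(g_1)$, and by the conjugated $U_2(g_2')$ sitting inside $\Omega_{2,\bfm_0^2}(g_2')$. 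The trace is then the distributional integral of $F$ against a product of two delta functions fixing $(a_0, v_0)$ to the argument of $\varphi$.

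The $a_0$-delta fixes $a_0 = a_2' + a_2/2$ with Jacobian $1/2$, converting the amplitude $2^{n_2+1}\sqrt{\cosh(2(a_2' - a_0 + a_2))}\cosh(a_2' - a_0 + a_2)^{n_2}$ into $2^{n_2}\sqrt{\cosh(a_2)}\cosh(a_2/2)^{n_2}$, matching the claimed amplitude. The $v_0$-delta amounts to the linear equation $(1 + \rho_+(g_1)^{-1})v_0 = (\text{stuff linear in } v_2, v_2')$, whose change-of-variable Jacobian contributes $|\det \rho_+(g_1)|/|\det(1 + \rho_+(g_1))|$; combined with the pre-existing $|\det \rho_+(g_1)|^{-1/2}$ from $\caR$, this reproduces the prefactor $|\det\rho_+(g_1)|^{1/2}/|\det(1 + \rho_+(g_1))|$ of the theorem.

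The last and hardest step is the identification of the phase. The $\ell$-linear contributions from $U_2(g_2)$ and $\Omega_{2,\bfm_0^2}(g_2')$ collapse, after setting $a_0 = a_2' + a_2/2$, into $2\sinh(a_2)\,\ell_2' + e^{a_2 - 2a_2'}\,\ell_2$; the $\omega_0$-terms from the $U_2$-phases evaluate to $e^{a_2/2 - a_2'}\cosh(a_2/2)\,\omega_0(x_2, x_2')$. What remains is quadratic in $v_0$ (from $\caR$) and bilinear in $v_0$ with $v_2, w_2, v_2', w_2'$ (from the symplectic $U_2$-phases); substituting the critical value of $v_0$ dictated by the $v_0$-delta and regrouping by the symplectic block decomposition $V_2 = V_0 \oplus V_1$ of $x_2 = (v_2, w_2)$ and $x_2' = (v_2', w_2')$ should condense everything into $\tfrac12\,\tilde x^T M_\rho(g_1)\,\tilde x$ with $\tilde x = e^{a_2/2 - a_2'}x_2 - 2\cosh(a_2/2)x_2'$. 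I expect this final algebraic reduction --- in particular checking that the diagonal block equals $-B_\rho = -(1 + \rho_+^T)^{-1}\rho_+^T\rho_-(1 + \rho_+)^{-1}$ and that the off-diagonal block equals the symplectic Cayley transform $C_\rho = \tfrac12(\rho_+ - 1)(\rho_+ + 1)^{-1}$ --- to be the main obstacle; symmetry of $B_\rho$ is secured by the relation $\rho_+^T \rho_- = \rho_-^T \rho_+$ expressing $\rho(g_1) \in Sp(V_2, \omega_0)$ that already underlies Proposition \ref{prop-metaplec}.
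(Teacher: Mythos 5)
Your proof follows essentially the same route as the paper's: factorize the trace through the tensor-product structure of $U$ and $\Omega_{\bfm_0}$, reduce everything to the single trace $\tr\bigl(U_2(g_2)\,\caR(g_1)\,\Omega_{2,\bfm_0^2}(g_2')\bigr)$ on $\ehH_2$, and evaluate it by substituting the explicit formulas \eqref{eq-unitindrep}, \eqref{eq-qumap} and Proposition \ref{prop-metaplec}. The only (immaterial) difference is that you compute that trace by integrating the Schwartz kernel over the diagonal, whereas the paper expands it via the coherent-state resolution of identity \eqref{eq-tracesymb}; both reduce to the same delta/Gaussian integrations, and your bookkeeping of the Jacobians (the factor $1/2$ from the $a_0$-delta and $|\det\rho_+(g_1)|/|\det(1+\rho_+(g_1))|$ from the $v_0$-delta) and of the phases is consistent with \eqref{eq-comput-starexp2}.
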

\begin{proof}
First, we use Proposition \ref{prop-qunorm-resol} and equation \eqref{eq-qunorm-trace}:
\begin{equation*}
\caE^\caO_g(g')=\tr(U(g)\Omega_{\bfm_0}(g'))=\tr(U_1(g_1)\Omega_{1,\bfm_0^1}(g'_1))\tr(U_2(g_2)\caR(g_1)\Omega_{2,\bfm_0^2}(g'_2)).
\end{equation*}
The second part of the above expression can be computed by using \eqref{eq-tracesymb}, it gives:
\begin{equation*}
\tr(U_2(g_2)\caR(g_1)\Omega_{2,\bfm_0^2}(g'_2))= \frac{\kappa_2}{\norm\varphi\norm^2_w}\int_{\gS_2} \langle\varphi_{g_2''}, U_2(g_2)\caR(g_1)\Omega_{2,\bfm_0^2}(g'_2)\varphi_{g_2''}\rangle\dd^Lg_2''
\end{equation*}
If we replace $U_2$, $\Omega_{2,\bfm_0^2}$ and $\caR$ by their expressions determined previously in \eqref{eq-unitindrep}, \eqref{eq-qumap} and Proposition \ref{prop-metaplec}, we find after some integrations and simplifications that $\forall g,g'\in G$,
\begin{equation}
\caE^\caO_g(g')=\caE^{\caO_1}_{g_1}(g'_1) \frac{2^{n_2}|\det(\rho_+(g_1))|^{\frac12}\sqrt{\cosh(a_2)}\cosh(\frac{a_2}{2})^{n_2}}{|\det(1+\rho_+(g_1))|}\exp(\frac{i\ee_2}{\theta}\Big[ 2\sinh(a_2)\ell'_2
+e^{a_2-2a_2'}\ell_2+X^TA_\rho X\Big]).\label{eq-comput-starexp}
\end{equation}
where
\begin{align*}
A_\rho=&\begin{pmatrix} -B_\rho & C_\rho^T & B_\rho & (1+\rho_+^T(g_1))^{-1} \\ C_\rho & 0 & -\rho_+(g_1)(1+\rho_+(g_1))^{-1} & 0\\ B_\rho & -\rho_+^T(g_1)(1+\rho_+^T(g_1))^{-1} & -B_\rho & C_\rho^T \\ (1+\rho_+(g_1))^{-1} & 0 & C_\rho & 0 \end{pmatrix},\\
X=&\begin{pmatrix} \frac{1}{\sqrt 2}e^{\frac{a_2}{2}-a'_2}v_2 \\ \frac{1}{\sqrt 2}e^{\frac{a_2}{2}-a'_2}w_2 \\ \sqrt 2 \cosh(\frac{a_2}{2})v'_2 \\ \sqrt 2 \cosh(\frac{a_2}{2})w'_2\end{pmatrix}
\end{align*}
and with $x_2=(v_2,w_2)$. A straighforward computation then gives the result.
\end{proof}
Let us denote by $\caE^{\caO_2}_{(g_1,g_2)}(g_2')$ the explicit part in the RHS of \eqref{eq-comput-starexp} which corresponds to the star-exponential of the group $\gS_2$ twisted by the action of $g_1\in G_1$.

The expression \eqref{eq-comput-starexp} seems to be ill-defined when $\det(1+\rho_+^{-1})=0$. However, one can obtain in this case a degenerated expression of the star-exponential which is well-defined. For example, when $\rho_+(g_1)=-\gone_{n_2}$, we have
\begin{multline*}
\caE^{\caO_2}_{(g_1,g_2)}(g'_2)= (\pi\theta)^{n_2}\frac{\sqrt{\cosh(a_2)}}{\cosh(\frac{a_2}{2})^{n_2}} \exp(\frac{i\ee_2}{\theta}\Big[ 2\sinh(a_2)\ell'_2
+e^{a_2-2a_2'}\ell_2+\frac12 e^{a_2-2a_2'}\omega_0(v_2,w_2)\Big])\\
\delta\Big(v_2'-\frac{e^{\frac{a_2}{2}-a_2'}}{2\cosh(\frac{a_2}{2})}v_2\Big) \delta\Big(w_2'-\frac{e^{\frac{a_2}{2}-a_2'}}{2\cosh(\frac{a_2}{2})}w_2\Big).
\end{multline*}

In the case where $\rho(g_1)=\gone$, i.e. when the action of $G_1$ on $\gS_2$ is trivial in $G$, we find the second part of the star-exponential
\begin{equation}
\caE^{\caO_2}_{g_2}(g'_2)=\sqrt{\cosh(a_2)}\cosh(\frac{a_2}{2})^{n_2}\exp(\frac{i\ee_2}{\theta}\Big[ 2\sinh(a_2)\ell'_2
+e^{a_2-2a_2'}\ell_2+e^{\frac{a_2}{2}-a_2'}\cosh(\frac{a_2}{2})\omega_0(x_2,x_2')\Big]).\label{eq-comput-starexpelem}
\end{equation}
which corresponds to the star-exponential of the elementary normal $j$-group $\gS_2$.

By using this characterization in terms of the quantization map, we can derive easily some properties of the star-exponential.
\begin{proposition}
\label{prop-link3}
The star-exponential enjoys the following properties. $\forall g,g'\in G$, $\forall g_0\in \caO$,
\begin{itemize}
\item hermiticity: $\overline{\caE^\caO_g(g_0)}=\caE^\caO_{g^{-1}}(g_0)$.
\item covariance: $\caE^\caO_{g'\fois g\fois g'^{-1}}(g'\fois g_0)=\caE^\caO_g(g_0)$.
\item BCH: $\caE^\caO_g\star_\theta\caE^\caO_{g'}=\caE^\caO_{g\fois g'}$.
\item Character formula: $\int_G \caE^\caO_g(g_0)\dd\mu(g_0)=\kappa^{-1}\tr(U(g))$.
\end{itemize}
\end{proposition}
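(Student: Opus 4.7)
The plan is to derive each of the four properties directly from the defining identity
$\caE^\caO_g(g')=\tr\bigl(U(g)\,\Omega_{\bfm_0}(g')\bigr)$
together with three structural ingredients already established in the excerpt: unitarity of the representation $U$, self-adjointness (in the distributional sense) of $\Omega_{\bfm_0}(g')$ coming from the fact that $\Sigma_i$ is an involution and $\bfm_0$ is real, and the $G$-equivariance $\Omega_{\bfm_0}(h\fois g_0)=U(h)\,\Omega_{\bfm_0}(g_0)\,U(h)^{-1}$.

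Hermiticity is a one-line computation using cyclicity and adjoints in the trace: $\overline{\caE^\caO_g(g_0)}=\tr\bigl(\Omega_{\bfm_0}(g_0)^\ast U(g)^\ast\bigr)=\tr\bigl(U(g^{-1})\Omega_{\bfm_0}(g_0)\bigr)=\caE^\caO_{g^{-1}}(g_0)$. Covariance follows from the equivariance of $\Omega_{\bfm_0}$: inserting $g'\fois g_0$ into the definition gives $\tr\bigl(U(g'gg'^{-1})U(g')\Omega_{\bfm_0}(g_0)U(g')^{-1}\bigr)$, and cyclicity collapses this to $\tr(U(g)\Omega_{\bfm_0}(g_0))=\caE^\caO_g(g_0)$. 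The character formula is obtained by exchanging trace and integral and applying the normalisation $\Omega_{\bfm_0}(1)=\gone$, which reads $\kappa\int_{\caO}\Omega_{\bfm_0}(g_0)\,\dd\mu(g_0)=\gone_{\ehH}$; hence $\int_G \caE^\caO_g(g_0)\,\dd\mu(g_0)=\tr\bigl(U(g)\cdot\kappa^{-1}\gone_{\ehH}\bigr)=\kappa^{-1}\tr(U(g))$.

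The BCH property is the only one that requires more than symbol-manipulation: I would prove the auxiliary identity $\Omega_{\bfm_0}(\caE^\caO_g)=U(g)$ first. This follows from the theorem identifying the symbol map as the left inverse of $\Omega_{\bfm_0}$: since $\tr\bigl(U(g)\Omega_{\bfm_0}(g_0)\bigr)=\caE^\caO_g(g_0)$ by definition, and since the distributional pairing $T\mapsto\tr(T\,\Omega_{\bfm_0}(\cdot))$ is non-degenerate, $U(g)$ must be the operator whose symbol is $\caE^\caO_g$. With this in hand, the star-product formula $(f_1\star_\theta f_2)(g_0)=\tr\bigl(\Omega_{\bfm_0}(f_1)\Omega_{\bfm_0}(f_2)\Omega_{\bfm_0}(g_0)\bigr)$ gives $(\caE^\caO_g\star_\theta\caE^\caO_{g'})(g_0)=\tr\bigl(U(g)U(g')\Omega_{\bfm_0}(g_0)\bigr)=\tr\bigl(U(g\fois g')\Omega_{\bfm_0}(g_0)\bigr)=\caE^\caO_{g\fois g'}(g_0)$, using the homomorphism property of $U$.

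The main obstacle is purely analytic: everything takes place in a distributional setting, since $U(g)$ is neither trace class nor Hilbert--Schmidt and $\caE^\caO_g$ is only a distribution on $\caO$. In particular, justifying $\Omega_{\bfm_0}(\caE^\caO_g)=U(g)$ and the Fubini-type interchange in the character formula requires working in the Schwartz/tempered-distribution framework of \cite{BG} and using density of smooth compactly supported symbols, rather than taking traces of operators naively. Once this distributional setup is in place, all four identities reduce to the formal calculations above.
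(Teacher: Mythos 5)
Your proposal is correct and follows essentially the same route as the paper: hermiticity from self-adjointness of $\Omega_{\bfm_0}(g_0)$ and unitarity of $U$, covariance from $G$-equivariance, BCH from $U$ being a group representation (via $\Omega_{\bfm_0}(\caE^\caO_g)=U(g)$, which is the precise content behind the paper's one-line remark), and the character formula from $\Omega_{\bfm_0}(1)=\gone$. Your closing caveat about the distributional setting matches the paper's implicit stance, which treats all traces in the distributional sense.
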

\begin{proof}
By using Theorem \ref{thm-exprexp}, we can show that $\overline{\caE^\caO_g(g_0)}=\tr(U(g^{-1})\Omega_{\bfm_0}(g_0))=\caE^\caO_{g^{-1}}(g_0)$ since $\Omega_{\bfm_0}(g_0)$ is self-adjoint. In the same way, covariance follows the $G$-equivariance of $\Omega_{\bfm_0}$. BCH property is related to the fact that $U$ is a group representation. Finally, we get
\begin{equation*}
\int_\caO \caE^\caO_g(g_0)\dd\mu(g_0)=\tr(U(g)\int_\caO\Omega_{\bfm_0}(g_0)\dd\mu(g_0))=\kappa^{-1} \tr(U(g))
\end{equation*}
by using that $\Omega_{\bfm_0}(1)=\gone$.
\end{proof}
Note that the BCH property makes sense in a non-formal way only in the functional space $\caM_{\star_\theta}(G)$ determined in section \ref{subsec-mult}, where we will see that the star-exponential belongs to.

\subsection{Other determination by PDEs}
\label{subsec-other}

We give here another way to determine the star-exponential without using the quantization map, but directly by solving the PDE it has to satisfy. We restrict here to the case of an elementary normal $j$-group $G=\gS$ for simplicity.

By using the strong-invariance of the star-product, for any $f\in\caM_{\star_\theta}(\gS)$ (see section \ref{subsec-mult}),
\begin{equation*}
\forall X\in\ks\quad:\quad [\lambda_X,f]_{\star_\theta}=-i\theta X^\ast f,
\end{equation*}
where $\lambda$ is the moment map \eqref{eq-elem-moment}, and by using also the equivariance of $\Omega_{\bfm_0}$, we deduce that
\begin{multline*}
[\Omega_{\bfm_0}(\lambda_X),\Omega_{\bfm_0}(f)]=\Omega_{\bfm_0}([\lambda_X,f]_{\star_\theta})=-i\theta \Omega_{\bfm_0}(X^\ast f)=-i\theta\frac{d}{dt}|_0\Omega_{\bfm_0}(L^\ast_{e^{-tX}}f)\\
=-i\theta\frac{d}{dt}|_0 U(e^{tX})\Omega_{\bfm_0}(f)U(e^{-tX})=-i\theta[U_{\ast}(X),\Omega_{\bfm_0}(f)]
\end{multline*}
Since the center of $\caM_{\star_\theta}(\gS)$ is trivial, this means that there exists a linear map $\beta:\kg\to \gC$ such that
\begin{equation*}
\Omega_{\bfm_0}(\lambda_X)=-i\theta U_{\ast}(X)+\beta(X)\gone.
\end{equation*}
The invariance of the product under $\Sigma$ (see \eqref{eq-sigma}) implies that $\beta(X)=-\beta(X)$ and finally $\beta(X)=0$. As a consequence, we have the following proposition.
\begin{proposition}
The star-exponential (see Definition \ref{def-comput-starexp}) of an elementary normal $j$-group $G=\gS$ satisfies the equation:
\begin{equation}
\partial_t \caE_{e^{tX}}=\frac{i}{\theta}(\lambda_X\star_\theta \caE_{e^{tX}})\label{eq-other-eqdef}
\end{equation}
with initial condition $\lim_{t\to 0}\caE_{e^{tX}}=1$.
\end{proposition}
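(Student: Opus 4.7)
The plan is to leverage the operator identity $\Omega_{\bfm_0}(\lambda_X) = -i\theta\,U_\ast(X)$ just derived (the $\beta(X)=0$ conclusion) together with the fact that, by definition of the symbol map, $\caE^\caO_g$ is the symbol of the operator $U(g)$; equivalently, in the appropriate distributional sense,
\begin{equation*}
\Omega_{\bfm_0}(\caE^\caO_g) \;=\; U(g).
\end{equation*}
Given these two ingredients, the PDE reduces to a direct trace computation.

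First, I would differentiate the definition $\caE_{e^{tX}}(g')=\tr\bigl(U(e^{tX})\Omega_{\bfm_0}(g')\bigr)$ in $t$, interchanging $\partial_t$ with the (distributional) trace, which yields
\begin{equation*}
\partial_t \caE_{e^{tX}}(g') \;=\; \tr\bigl(U_\ast(X)\,U(e^{tX})\,\Omega_{\bfm_0}(g')\bigr).
\end{equation*}
Next I would substitute $U_\ast(X) = \tfrac{i}{\theta}\Omega_{\bfm_0}(\lambda_X)$ to obtain
\begin{equation*}
\partial_t \caE_{e^{tX}}(g') \;=\; \tfrac{i}{\theta}\,\tr\bigl(\Omega_{\bfm_0}(\lambda_X)\,U(e^{tX})\,\Omega_{\bfm_0}(g')\bigr),
\end{equation*}
and then replace $U(e^{tX})$ by $\Omega_{\bfm_0}(\caE_{e^{tX}})$. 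The right-hand side is then $\tfrac{i}{\theta}\tr\bigl(\Omega_{\bfm_0}(\lambda_X)\Omega_{\bfm_0}(\caE_{e^{tX}})\Omega_{\bfm_0}(g')\bigr)$, which by the very definition of the star-product recalled in section \ref{subsec-quelem} equals $\tfrac{i}{\theta}\bigl(\lambda_X\star_\theta \caE_{e^{tX}}\bigr)(g')$, giving \eqref{eq-other-eqdef}.

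For the initial condition, at $t=0$ one has $U(e)=\gone_\ehH$, and since $\Omega_{\bfm_0}(1)=\gone_\ehH$ the injectivity of $\Omega_{\bfm_0}$ gives $\caE_e \equiv 1$; the limit $\lim_{t\to 0}\caE_{e^{tX}}=1$ then follows from the strong continuity of $t\mapsto U(e^{tX})$ inside the trace.

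The main subtlety, which I expect to be the principal obstacle, is that all the operators $U_\ast(X)$, $\Omega_{\bfm_0}(\lambda_X)$, $U(g)$ and $\Omega_{\bfm_0}(g')$ are unbounded and the relevant ``traces'' are only distributional pairings in $g,g'$; consequently the identities $\Omega_{\bfm_0}(\caE_g)=U(g)$, the cyclicity used in rearranging the triple product, and the recognition of $\lambda_X\star_\theta\caE_{e^{tX}}$ as a bona fide function must be justified in the multiplier algebra $\caM_{\star_\theta}(\gS)$ introduced in section \ref{subsec-mult}, where $\caE^\caO_g$ is shown to live and where $\star_\theta$ acts. Once the computation is performed at the formal level as above, the validity of each step in that functional setting is what needs to be checked — the purely algebraic manipulation itself is immediate from the preceding identity.
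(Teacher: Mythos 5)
Your proposal is correct and follows essentially the same route as the paper: differentiate the defining trace, substitute $U_\ast(X)=\tfrac{i}{\theta}\Omega_{\bfm_0}(\lambda_X)$ from the strong-invariance argument, and identify the resulting triple trace with $\tfrac{i}{\theta}(\lambda_X\star_\theta\caE_{e^{tX}})$ via $\Omega_{\bfm_0}(\caE_{e^{tX}})=U(e^{tX})$ and the definition of the star-product. Your additional remarks on the initial condition and on justifying the distributional manipulations in $\caM_{\star_\theta}(\gS)$ are consistent with (indeed slightly more careful than) the paper's treatment.
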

\begin{proof}
Indeed, by using $\Omega_{\bfm_0}(\lambda_X)=-i\theta U_{\ast}(X)$, we derive
\begin{multline*}
\partial_t \caE_{e^{tX}}(g_0)=\partial_t \tr(U(e^{tX})\Omega_{\bfm_0}(g_0))=\tr(U_{*}(X)U(e^{tX})\Omega_{\bfm_0}(g_0))\\
=\frac{i}{\theta}\tr(\Omega_{\bfm_0}(\lambda_X)U(e^{tX})\Omega_{\bfm_0}(g_0))=\frac{i}{\theta}(\lambda_X\star_\theta \caE_{e^{tX}})(g_0)
\end{multline*}
\end{proof}

Now, we can use this equation to find directly the expression of the star-exponential. Let us do it for example for the co-adjoint orbit associated to the sign $\ee=+1$. Since the equation \eqref{eq-other-eqdef} is integro-differential and complicated to solve, we will analyze the following equation
\begin{equation}
\partial_t f_t=\frac{i}{\theta}(\lambda_X\star_\theta^0 f_t),\qquad \lim_{t\to0} f_t=1\label{eq-expM1}
\end{equation}
for the Moyal product $\star_\theta^0$. Indeed, we have the expression of the intertwiner $T_\theta$ from $\star_\theta^0$ to $\star_\theta$. We define the partial Fourier transformation as
\begin{equation}
\caF f(a,x,\xi):=\hat f(a,x,\xi):=\int\dd\ell\ e^{-i\xi\ell}f(a,x,\ell).\label{eq-fourier}
\end{equation}
Applying the partial Fourier transformation \eqref{eq-fourier}, with $X=\alpha H+y+\beta E\in\ks$, on the action of moment maps by the Moyal product, we find
\begin{align*}
\caF(\lambda_{H}\star_\theta^0 f)&= \left(2i\partial_\xi+\frac{i\theta}{2}\partial_a\right)\hat f\\
\caF(\lambda_{y}\star_\theta^0 f)&=e^{-a-\frac{\theta\xi}{4}}\left(\omega_0(y,x)+\frac{i\theta}{2}y\partial_x\right)\hat f\\
\caF(\lambda_{E}\star_\theta^0 f)&=e^{-2a-\frac{\theta\xi}{2}}\hat f,
\end{align*}
so that the equation \eqref{eq-expM1} can be reformulated as
\begin{equation}
\partial_t \hat f_t=\frac{i}{\theta}\Big[2i\alpha\partial_\xi+\frac{i\theta\alpha}{2}\partial_a+\beta e^{-2a-\frac{\theta\xi}{2}}+ e^{-a-\frac{\theta\xi}{4}}(\omega_0(y,x)+\frac{i\theta}{2}y\partial_x)\Big]\hat f_t\label{eq-expM2}
\end{equation}
which is a pure PDE. Then, owing to the form of the moment map \eqref{eq-elem-moment}, we consider the ansatz
\begin{equation}
f_t(a,x,\ell)=v(t)\exp\frac{i}{\theta}\Big[2\ell\gamma_1(t)+e^{-2a}\gamma_2(t)+e^{-a}\gamma_3(t) \omega_0(y,x)\Big]\label{eq-expM3}
\end{equation}
whose partial Fourier transform can be expressed as
\begin{equation*}
\hat f_t(a,x,\xi)=4\pi^2\delta\Big(\xi-\frac{2\gamma_1(t)}{\theta}\Big)\,v(t)\exp\frac{i}{\theta}\Big[e^{-2a}\gamma_2(t)+e^{-a}\gamma_3(t)\omega_0(y,x)\Big].
\end{equation*}
Inserting this ansatz into Equation \eqref{eq-expM2}, it gives:
\begin{equation*}
\gamma'_1(t)=\alpha,\qquad \gamma'_2(t)=\alpha\gamma_2(t)+\beta e^{-\gamma_1(t)},\qquad
\gamma_3'(t)=\frac{\alpha}{2}\gamma_3(t)+e^{-\frac{\alpha t}{2}},\qquad v'(t)=0.
\end{equation*}
We find that the solutions with initial condition $\lim_{t\to0}f_t=1$ are:
\begin{equation*}
\gamma_1=\alpha t,\qquad \gamma_2=\frac{\beta}{\alpha}\sinh(\alpha t),\qquad \gamma_3=\frac{\sinh(\frac{\alpha t}{2})}{\alpha},\qquad v=1.
\end{equation*}

By using intertwining operators \eqref{eq-quelem-intertw}, we see that $T_\theta^{-1}\lambda_X=\lambda_X$, and $T_\theta f_t$ is then a solution of \eqref{eq-other-eqdef}:
\begin{eqnarray*}
&&E_{\star_\theta}(t\lambda_X)(a,x,\ell):=\caE_{e^{tX}}(a,x,\ell)=T_\theta f_t(a,x,\ell)\\
&=&\sqrt{\cosh(\alpha t)}\cosh(\frac{\alpha t}{2})^n e^{\frac{i}{\theta}\sinh(\alpha t)\Big(2\ell+\frac{\beta}{\alpha}e^{-2a}
+\frac{e^{-a}}{\alpha}\omega_0(y,x)\Big)}
\end{eqnarray*}
To obtain the star-exponential, we need the expression of the logarithm of the group $\gS$: $\caE_{g_0}=E_{\star_\theta}(\lambda_{\log(g_0)})$. For $X=\alpha H+y+\beta E\in\ks$, the exponential of the group $\gS$ has the expression
\begin{equation*}
\exp(\alpha H+y+\beta E)=\Big(\alpha,\frac{2e^{-\frac{\alpha}{2}}}{\alpha}\sinh(\frac{\alpha}{2})y, \frac{\beta}{\alpha}e^{-\alpha}\sinh(\alpha)\Big),
\end{equation*}
and the logarithm:
\begin{equation*}
\log(a,x,\ell)=a H+\frac{a}{2}\frac{e^{\frac{a}{2}}}{\sinh(\frac{a}{2})}x
+\frac{a e^a}{\sinh(a)}\ell E.
\end{equation*}
Therefore, we obtain
\begin{equation*}
\caE_{g_0}(g)= \sqrt{\cosh(a_0)}\cosh(\frac{a_0}{2})^n e^{\frac{i}{\theta}\Big(2\sinh(a_0)\ell+e^{a_0-2a}\ell_0
+e^{\frac{a_0}{2}-a}\cosh(\frac{a_0}{2})\omega_0(x_0,x)\Big)},
\end{equation*}
which coincides with the expression \eqref{eq-comput-starexpelem} determined by using the quantization map $\Omega_{\bfm_0}$. Note that the BCH property (see Proposition \ref{prop-link3}) can also be checked directly at the level of the Lie algebra $\ks$. From the above expressions of the logarithm and the exponential of the group $\gS$, we derive the BCH expression: $\text{BCH}(X_1,X_2):=\log(e^{X_1}e^{X_2})$, i.e.
\begin{multline*}
\text{BCH}(X_1,X_2)=\Big(\alpha_1+\alpha_2,\frac{(\alpha_1+\alpha_2)}{\sinh(\frac{\alpha_1+\alpha_2}{2})}\Big(\frac{e^{-\frac{\alpha_2}{2}}}{\alpha_1}\sinh(\frac{\alpha_1}{2})y_1+\frac{e^{\frac{\alpha_1}{2}}}{\alpha_2}\sinh(\frac{\alpha_2}{2})y_2\Big),\\
\frac{(\alpha_1+\alpha_2)}{\sinh(\alpha_1+\alpha_2)}\Big[\frac{\beta_1}{\alpha_1}e^{-\alpha_2}\sinh(\alpha_1)+\frac{\beta_2}{\alpha_2}e^{\alpha_1}\sinh(\alpha_2)+\frac{2}{\alpha_1\alpha_2}e^{\frac{\alpha_1-\alpha_2}{2}}\sinh(\frac{\alpha_1}{2})\sinh(\frac{\alpha_2}{2})\omega_0(y_1,y_2)\Big]\Big).
\end{multline*}
Then, BCH property $\caE_{g}\star_\theta\caE_{g'}=\caE_{g\fois g'}$ is equivalent to
\begin{equation*}
\forall X_1,X_2\in\ks\quad:\quad E_{\star_\theta}(\lambda_{X_1})\star_\theta E_{\star_\theta}(\lambda_{X_2})=E_{\star_\theta}(\lambda_{\text{BCH}(X_1,X_2)})
\end{equation*}
which turns out to be true for the star-product \eqref{eq-starprod} and the star-exponential determined above.

\section{Non-formal definition of the star-exponential}

\subsection{Schwartz spaces}
\label{subsec-schwartz}

In \cite{BG}, a Schwartz space adapted to the elementary normal $j$-group $\gS$ has been introduced, which is different from the usual one $\caS(\gR^{2n+2})$ in the global chart $\{(a,x,\ell)\}$, but related to oscillatory integrals. Let us have a look on the phase \eqref{eq-starprod} of the star-product:
\begin{equation*}
\ee S_{\gS}(0,g_1,g_2)=\sinh(2a_1)\ell_2-\sinh(2a_2)\ell_1+\cosh(a_1)\cosh(a_2)\omega_0(x_1,x_2)
\end{equation*}
with $g_i=(a_i,x_i,\ell_i)\in\gS$. Recall that the left-invariant vector fields of $\gS$ are given by
\begin{equation*}
\tilde H=\partial_a-x\partial_x-2\ell\partial_\ell,\quad \tilde y=y\partial_x+\frac12\omega_0(x,y)\partial_\ell,\quad \tilde E=\partial_\ell.
\end{equation*}
We define the maps $\tilde\alpha$ by $\forall X=(X_1,X_2)\in\ks\oplus\ks$,
\begin{equation*}
\tilde X\fois e^{-\frac{2i}{\theta}S_\gS(0,g_1,g_2)}=:-\frac{2i\ee}{\theta}\tilde\alpha_X(g_1,g_2)e^{-\frac{2i}{\theta}S_\gS(0,g_1,g_2)}
\end{equation*}
since it is an oscillatory phase. For example, we have
\begin{equation*}
\tilde\alpha_{(E,0)}(g_1,g_2)=-\sinh(2a_2),\; \tilde\alpha_{(H,0)}(g_1,g_2)=2\cosh(2a_1)\ell_2+2\sinh(2a_2)\ell_1-e^{-a_1}\cosh(a_2)\omega_0(x_1,x_2).
\end{equation*}
Then, we set $\alpha_X(g):=\tilde\alpha_{(X,0)}(0,g)$ for any $X\in\ks$ and $g\in G$, whose expressions are
\begin{equation*}
\alpha_H(g)=2\ell,\qquad \alpha_y(g)=\cosh(a)\omega(y,x),\qquad \alpha_E(g)=-\sinh(2a).
\end{equation*}
This leads to the following definition.
\begin{definition}
\label{def-schwartz}
The {\defin Schwartz space} of $\gS$ is defined as
\begin{equation*}
\caS(\gS)=\{f\in C^\infty(\gS)\quad \forall j\in\gN^{2n+2},\ \forall P\in\caU(\ks)\quad
\norm f\norm_{j,P}:=\sup_{g\in\gS}\Big| \alpha^j(g) \tilde P f(g)\Big|<\infty\}
\end{equation*}
where $\alpha^j:=\alpha_H^{j_1}\alpha_{e_1}^{j_2}\dots\alpha_{e_{2n}}^{j_{2n+1}} \alpha_E^{j_{2n+2}}$.
\end{definition}
\noindent It turns out that the space $\caS(\gS)$ corresponds to the usual Schwartz space in the coordinates $(r,x,\ell)$ with $r=\sinh(2a)$. It is stable by the action of $\gS$:
\begin{equation*}
\forall f\in\caS(\gS),\ \forall g\in \gS\quad:\quad g^\ast f\in\caS(\gS).
\end{equation*}
Moreover, $\caS(\gS)$ is a Fr\'echet nuclear space endowed with the seminorms $(\norm f\norm_{j,P})$.
\medskip

For $f,h\in\caS(\gS)$, the product $f\star_{\theta}h$ is well-defined by \eqref{eq-starprod}. However, to show that it belongs to $\caS(\gS)$, we will use arguments close to oscillatory integral theory. Let us illustrate this concept. One can show that the following operators leave the phase $e^{-\frac{2i}{\theta}S_\gS(0,g_1,g_2)}$ invariant:
\begin{align*}
&\caO_{a_2}:=\frac{1}{1+\tilde\alpha_{(E,0)}^2}(1-\frac{\theta^2}{4}\tilde E^2)=\frac{1}{1+\sinh(2a_2)^2}(1-\frac{\theta^2}{4}\partial_{\ell_1}^2), \qquad \caO_{a_1}:=\frac{1}{1+\sinh(2a_1)^2}(1-\frac{\theta^2}{4}\partial_{\ell_2}^2),\\
& \caO_{x_2}:=\frac{1}{1+x_2^2}(1-\frac{\theta^2}{4\cosh(a_1)^2\cosh(a_2)^2}\partial_{x_1}^2), \qquad  \caO_{x_1}:=\frac{1}{1+x_1^2}(1-\frac{\theta^2}{4\cosh(a_1)^2\cosh(a_2)^2}\partial_{x_2}^2),\\
&\caO_{\ell_2}:=\frac{1}{1+\ell_2^2}(1-\frac{\theta^2}{4}(\frac{1}{\cosh(2a_1)}(\partial_{a_1}-\tanh(a_1)x_1\partial_{x_1}))^2),\\
&\caO_{\ell_1}:=\frac{1}{1+\ell_1^2}(1-\frac{\theta^2}{4}(\frac{1}{\cosh(2a_2)}(\partial_{a_2}-\tanh(a_2)x_2\partial_{x_2}))^2).
\end{align*}

So we can add arbitrary powers of these operators in front of the phase without changing the expression. Then, using integrations by parts, we have for $F\in\caS(\gS^2)$:
\begin{multline}
\int\ e^{-\frac{2i}{\theta}S_\gS(0,g_1,g_2)} F(g_1,g_2)\dd g_1\dd g_2=\\
\int\ e^{-\frac{2i}{\theta}S_\gS(0,g_1,g_2)} (\caO^\ast_{a_1})^{k_1} (\caO^\ast_{a_2})^{k_2} (\caO^\ast_{x_1})^{p_1} (\caO^\ast_{x_2})^{p_2} (\caO^\ast_{\ell_1})^{q_1} (\caO^\ast_{\ell_2})^{q_2} F(g_1,g_2)\dd g_1\dd g_2\\
= \int\ e^{-\frac{2i}{\theta}S_\gS(0,g_1,g_2)} \frac{1}{(1+\sinh^2(2a_1))^{k_1}(1+\sinh^2(2a_2))^{k_2}}\\
\frac{1}{(1+x_1^2)^{p_1-q_2}(1+x_2^2)^{p_2-q_1}(1+\ell_1^2)^{q_1}(1+\ell_2^2)^{q_2}}DF(g_1,g_2)\dd g_1\dd g_2\label{eq-osc}
\end{multline}
for any $k_i,q_i,p_i\in\gN$ such that $p_1\geq q_2$ and $p_2\geq q_1$, and where $D$ is a linear combination of products of bounded functions (with every derivatives bounded) in $(g_1,g_2)$ with powers of $\partial_{\ell_i}$, $\partial_{x_i}$ and $\frac{1}{\cosh(2a_i)}\partial_{a_i}$. The first line of \eqref{eq-osc} is not defined for non-integrable functions $F$ bounded by polynoms in $r_i:=\sinh(2a_i)$, $x_i$ and $\ell_i$. However, the last two lines of \eqref{eq-osc} are well-defined for $k_i,p_i,q_i$ sufficiently large. Therefore it gives a sense to the first line, now understood as an oscillatory integral, i.e. as being equal to the last last two lines. This definition of oscillatory integral \cite{BM,BG} is unique, in particular unambiguous in the powers $k_i,p_i,q_i$ because of the density of $\caS(\gS)$ in polynomial functions in $(r,x,\ell)$ of a given degree. Note that this corresponds to the usual oscillatory integral \cite{Hormander:1979} in the coordinates $(r,x,\ell)$.

The next Theorem, proved in \cite{BG}, can be showed by using such methods of oscillatory integrals on $\caS(\gS)$.
\begin{theorem}
Let $\caP:\gR\to C^\infty(\gR)$ be a smooth map such that $\caP_0\equiv1$, and $\caP_\theta(a)$ as well as its inverse are bounded by $C\sinh(2a)^k$, $k\in\gN$, $C>0$. Then, the expression \eqref{eq-starprod} yields a $\gS$-invariant non-formal deformation quantization.\\
In particular, $(\caS(\gS),\star_{\theta})$ is a nuclear Fr\'echet algebra.
\end{theorem}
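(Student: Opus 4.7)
My plan is to follow the oscillatory-integral strategy pioneered in \cite{BM,BG}: interpret the formula \eqref{eq-starprod} (with amplitude $K_\gS$ replaced by $\caP_\theta K_\gS$) as an oscillatory integral with phase $S_\gS(0,g_1,g_2)$ on $\caS(\gS)\times\caS(\gS)$, and then derive continuity, associativity, $\gS$-invariance and the semiclassical limit from a single uniform seminorm estimate. The first step is well-definedness. For $f_1,f_2\in\caS(\gS)$, the amplitude $\caP_\theta(a_1)\caP_\theta(a_2) K_\gS(0,g_1,g_2)$ grows polynomially in $\sinh(2a_i)$ and in $\cosh(a_1-a_2)$, hence polynomially in the $\alpha$-coordinates, by the hypothesis on $\caP$ (both $\caP_\theta$ and $\caP_\theta^{-1}$ are polynomially controlled by $\sinh(2a)^k$). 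Applying the adjoints of the phase-preserving operators $\caO_{a_i}^\ast$, $\caO_{x_i}^\ast$, $\caO_{\ell_i}^\ast$ sufficiently many times, exactly as in \eqref{eq-osc}, turns the divergent integrand into an absolutely integrable one; the result is independent of the chosen powers $k_i,p_i,q_i$ by the density of $\caS(\gS)$ in the relevant polynomial classes.

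The second and central step is Schwartz stability together with joint continuity. To control $\norm{f_1\star_\theta f_2}_{j,P}$ I bring the left-invariant field $\tilde P$ under the integral. Since $K_\gS$ and $S_\gS$ only depend on the differences $a-a_i$, $a_i-a_j$ (and suitably translated $x,\ell$), $\tilde P$ distributes onto $f_1$ and $f_2$ up to smooth bounded coefficients. The polynomial prefactor $\alpha^j(g)$ is then absorbed by further integration by parts against the phase, using that $g$ enters $S_\gS$ linearly through $\sinh(2(a_i-a))\ell_j$-type terms. Summing these operations yields an estimate of the form
\begin{equation*}
\norm{f_1\star_\theta f_2}_{j,P}\;\leq\;C_{j,P}\,\norm{f_1}_{j_1,P_1}\,\norm{f_2}_{j_2,P_2}
\end{equation*}
for explicit multi-indices $j_i$ and $P_i\in\caU(\ks)$. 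This gives joint continuity $\caS(\gS)\times\caS(\gS)\to\caS(\gS)$ and, since $\caS(\gS)$ is Fr\'echet nuclear (being the usual Schwartz space in coordinates $(r,x,\ell)$ with $r=\sinh(2a)$), the nuclear Fr\'echet algebra structure of the statement.

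For the remaining structural properties I would argue as follows. The $\gS$-invariance $L_g^\ast(f_1\star_\theta f_2)=(L_g^\ast f_1)\star_\theta(L_g^\ast f_2)$ is immediate from the dependence of $K_\gS,S_\gS$ on group-theoretic differences combined with the left-invariance of the Liouville measure, which is rigorous within the oscillatory-integral framework via change of variables. Associativity holds on $\caD(\caO_\ee)$ as an identity of compositions $\Omega_{\theta,\ee,\bfm_0}(f_1)\Omega_{\theta,\ee,\bfm_0}(f_2)\Omega_{\theta,\ee,\bfm_0}(f_3)$ by the definition of $\star_\theta$ in Section \ref{subsec-quelem}, and it extends to $\caS(\gS)$ by the continuity established above together with the density of $\caD$ in $\caS$. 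The deformation property $\lim_{\theta\to 0}f_1\star_\theta f_2=f_1\cdot f_2$ is obtained by stationary phase: the critical set of $S_\gS(0,\cdot,\cdot)$ is the origin $g_1=g_2=0$ with non-degenerate Hessian (read off from the Moyal model \eqref{eq-quelem-intertw}), and the normalisation $\caP_0\equiv1$ together with $\kappa^{-1}(\pi\theta)^{2n+2}$ reproduces exactly $f_1(g)f_2(g)$ as leading order.

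The hard part will be step two, namely the seminorm estimate: one must carefully balance the exponential growth of the amplitude factors $\cosh(a_i-a)^n$ and of $\caP_\theta$ against the decay extracted from the phase by the $\caO^\ast$-operators, while simultaneously trading the prefactor $\alpha^j(g)$ for derivatives acting on $f_1,f_2$. The polynomial bound on $\caP_\theta$ and $\caP_\theta^{-1}$ in $\sinh(2a)^k$ is exactly what is needed so that finitely many applications of $\caO_{a_i}^\ast$ suffice to make the integrand absolutely integrable, and this closes the estimate uniformly on compact $\theta$-intervals, giving simultaneously well-definedness, continuity, and the semiclassical deformation.
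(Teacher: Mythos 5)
Your proposal follows essentially the same route the paper indicates: the paper does not prove this theorem itself but attributes it to \cite{BG} and points precisely to the oscillatory-integral machinery of Section 4.1 (the phase-preserving operators $\caO_{a_i},\caO_{x_i},\caO_{\ell_i}$ and the integration-by-parts identity \eqref{eq-osc}) as the method of proof, which is exactly the strategy you lay out, including the correct identification of the critical point of $S_\gS(0,\cdot,\cdot)$ at the origin for the semiclassical limit. Your sketch is consistent with that method; the one caveat is that the central seminorm estimate, which you rightly flag as the hard part, is only described and not executed here, just as it is deferred to \cite{BG} in the paper.
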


In the following, we show a factorization property for this Schwartz space. First, by introducing $\gamma(a)=\sinh(2a)$ and $\caS(A):=\gamma^\ast\caS(\gR)$, we note that the group law of $\gS$ reads in the coordinates $(r=\gamma(a),x,\ell)$:
\begin{multline*}
(r,x,\ell)\fois (r',x',\ell')=\Big(r\sqrt{1+r'^2}+r'\sqrt{1+r^2},(c(r')-s(r'))x+x',(\sqrt{1+r'^2}-r')\ell+\ell'\\
+\frac{1}{2}(c(r')-s(r'))\omega_0(x,x')\Big)
\end{multline*}
with the auxiliary functions:
\begin{align}
&c(r)=\frac{\sqrt 2}{2}(1+\sqrt{1+r^2})^{\frac12}=\cosh(\frac12\text{arcsinh}(r)),\label{eq-auxfunc}\\
&s(r)=\frac{\sqrt 2}{2}\text{sgn}(r)(-1+\sqrt{1+r^2})^{\frac12}=\sinh(\frac12\text{arcsinh}(r)).\nonumber
\end{align}

\begin{proposition}[Factorization]
\label{prop-factor}
The map $\Phi$ defined by $\Phi(f\otimes h)=f\star_\theta h$, for $f\in\caS(A)$ and $h\in\caS(\gR^{2n+1})$ realizes a continuous automorphism $\caS(\gS)=\caS(A)\hat\otimes\caS(\gR^{2n+1})\to\caS(\gS)$.
\end{proposition}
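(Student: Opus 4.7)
The plan is to compute $\Phi$ explicitly on pure tensors using \eqref{eq-starprod}, identify it as a composition of partial Fourier transforms with the pullback by a smooth diffeomorphism, and then read off continuity and invertibility. Setting $f_1(g_1)=f(a_1)$ and $f_2(g_2)=h(x_2,\ell_2)$ in \eqref{eq-starprod}, the six-dimensional integral simplifies dramatically because the integrand depends on neither $(x_1,\ell_1)$ nor $a_2$. The phase $\ee S_\gS|_g$ is linear in $\ell_1$ with coefficient $-\sinh(2(a_2-a))$, so the oscillatory integration in $\ell_1$ produces $\tfrac{\pi\theta}{2}\delta(a_2-a)$. Setting $a_2=a$, the phase becomes linear in $x_1$ via $\cosh(a_1-a)\omega_0(x_1,x_2-x)$, so the integration in $x_1$ yields $(\pi\theta)^{2n}\cosh(a_1-a)^{-2n}\delta(x_2-x)$. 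After setting $x_2=x$ (which kills the remaining quadratic term via $\omega_0(x,x)=0$), the amplitude $K_\gS|_{a_2=a,\,x_2=x}=4\cosh(2(a_1-a))\cosh(a_1-a)^{2n}$ cancels the $\cosh^{-2n}$ factor and provides the Jacobian of the change of variable $\xi=(2\ee/\theta)\sinh(2(a_1-a))$; the result is
\begin{equation*}
(f\star_\theta h)(a,x,\ell) \;=\; \frac{1}{2\pi}\int e^{i\xi(\ell-\ell_2)}\, f\bigl(a+\phi(\xi)\bigr)\, h(x,\ell_2)\, d\xi\, d\ell_2,
\qquad \phi(\xi):=\tfrac12\,\mathrm{arcsinh}(\ee\theta\xi/2).
\end{equation*}

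Denoting by $\caF$ the partial Fourier transform in $\ell$ of \eqref{eq-fourier} and by $T_\phi$ the pullback $T_\phi G(a,x,\xi):=G(a+\phi(\xi),x,\xi)$, the identity above reads $\Phi(f\otimes h)=(\caF^{-1}\circ T_\phi\circ\caF)(f\otimes h)$. Passing to the $(r=\sinh(2a),x,\ell)$-coordinates, nuclearity of the Schwartz space gives $\caS(\gS)=\caS(A)\hat\otimes\caS(\gR^{2n+1})$, so the right-hand side extends by continuity to a bounded linear map on all of $\caS(\gS)$ that agrees with $\Phi$ on pure tensors. Since $\caF$ is a topological automorphism of $\caS(\gS)$, the statement reduces to the same assertion for $T_\phi$ on $\caS(\gR^{2n+2})$.

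In the $(r,x,\xi)$-coordinates, $T_\phi$ is the pullback by the smooth diffeomorphism
\begin{equation*}
(r,x,\xi)\;\mapsto\;\Bigl(\,r\sqrt{1+\theta^2\xi^2/4}\,+\,\ee(\theta\xi/2)\sqrt{1+r^2},\;x,\;\xi\,\Bigr),
\end{equation*}
whose inverse has the same form with $\ee$ replaced by $-\ee$, and whose partial derivatives all grow polynomially in $(r,\xi)$. The usual chain-rule seminorm estimates then show that $T_\phi$ and $T_{-\phi}$ both preserve $\caS(\gR^{2n+2})$ continuously (each differentiation introduces polynomially-bounded coefficients, and the logarithmic shift $\phi(\xi)=O(\log(1+|\xi|))$ in the first slot is absorbed by rapid decay). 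Hence $\Phi$ is a topological automorphism of $\caS(\gS)$ with inverse $\caF^{-1}\circ T_{-\phi}\circ\caF$. The main obstacle is the rigorous justification of the delta-function identifications in the first step, since $f(a_1)h(x_2,\ell_2)$ is not Schwartz on $\gS\times\gS$: this is handled by the oscillatory-integral framework of Section~\ref{subsec-schwartz} and \cite{BG}, inserting appropriate powers of the operators $\caO_{\ell_1},\caO_{x_1},\caO_{a_2},\caO_{x_2}$ before performing the formal manipulations.
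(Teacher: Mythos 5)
Your computation reproduces exactly the paper's explicit formula for $\Phi$ (your $f(a+\phi(\xi))$ is precisely $f\bigl(r\sqrt{1+\theta^2\xi^2/4}+\tfrac{\ee\theta}{2}\xi\sqrt{1+r^2}\bigr)$ in the $r=\sinh(2a)$ coordinate), and your argument --- realize $\Phi$ as a partial Fourier transform conjugating a tame pullback, then invert by flipping the sign of $\ee$ --- is essentially the paper's own proof, merely with the intermediate delta-function bookkeeping made explicit. The proposal is correct and follows the same route.
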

\begin{proof}
Due to the nuclearity of the Schwartz space, we have indeed $\caS(\gS)=\caS(A)\hat\otimes\caS(\gR^{2n+1})$. For $f\in\caS(A)$ (abuse of notation identifying $f(a)$ and $f(r):=f(\gamma^{-1}(r))$) and $h\in\caS(\gR^{2n+1})$, we reexpress the star-product \eqref{eq-starprod} in the coordinates $(r,x,\ell)$:
\begin{multline*}
(f\star_{\theta}h)(r,x,\ell)=\frac{1}{(\pi\theta)^{2n+2}}\int\ \Big(1-\frac{r_1r_2}{\sqrt{(1+r_1^2)(1+r_2^2)}}\Big)\frac{\sqrt{c(r_1)c(r_2)}}{\sqrt{c(r_1\sqrt{1+r_2^2}-r_2\sqrt{1+r_1^2})}}\\
e^{-\frac{2i\ee}{\theta}(r_1\ell_2-r_2\ell_1+\omega_0(x_1,x_2))} f(r\sqrt{1+r_1^2}+r_1\sqrt{1+r^2})\\
h(\frac{1}{c(r_1)}x_2+(c(r_2)-\frac{s(r_1)s(r_2)}{c(r_1)})x,\ell_2+\sqrt{1+r_2^2}\ell)\dd r_i\dd x_i\dd \ell_i
\end{multline*}
By using the partial Fourier transform $\hat h(r,\xi)=\int\dd\ell e^{-i\ell\xi}h(x,\ell)$, and integrating over several variables, we obtain
\begin{equation*}
(f\star_{\theta}h)(r,x,\ell)=\frac{1}{2\pi}\int\ f(r\sqrt{1+\frac{\theta^2\xi^2}{4}}+\frac{\ee\theta\xi}{2}\sqrt{1+r^2})\hat h(x,\xi)e^{i\ell\xi}\dd \xi.
\end{equation*}
For $\varphi\in\caS(\gS)$, we have now the following explicit expression for $\Phi$:
\begin{equation*}
\Phi(\varphi)(r,x,\ell)=\frac{1}{2\pi}\int\ \hat\varphi(r\sqrt{1+\frac{\theta^2\xi^2}{4}}+\frac{\ee\theta\xi}{2}\sqrt{1+r^2},x,\xi)e^{i\ell\xi}\dd \xi
\end{equation*}
which permits to deduce that $\Phi$ is valued in $\caS(\gS)$ and continuous. Then, the formula
\begin{equation*}
\hat \varphi(r,x,\ell)=\int\ \Phi(\varphi)(r\sqrt{1+\frac{\theta^2\ell^2}{4}}-\frac{\ee\theta\ell}{2}\sqrt{1+r^2},x,\xi)e^{-i\ell\xi}\dd\xi
\end{equation*}
permits to obtain the inverse of $\Phi$ which is also continuous.
\end{proof}

For normal $j$-groups $G=G_1\ltimes \gS_2$, we define the Schwartz space recursively
\begin{equation*}
\caS(G)=\caS(G_1)\hat\otimes\caS(\gS_2)
\end{equation*}
and obtain the same properties as before. In particular, endowed with the star-product \eqref{eq-starprodnorm}, the Schwartz space $\caS(G)$ is a nuclear Fr\'echet algebra.

\subsection{Multipliers}
\label{subsec-mult}

Let us consider the topological dual $\caS'(\gS)$ of $\caS(\gS)$. In the coordinates $(r=\gamma(a),x,\ell)$, it corresponds to tempered distributions. By denoting $\langle-,-\rangle$ the duality bracket between $\caS'(\gS)$ and $\caS(\gS)$, one can extend the product $\star_\theta$ (with tracial identity) as
\begin{equation*}
\forall T\in\caS'(\gS),\ \forall f,h\in\caS(\gS)\quad:\quad \langle T\star_\theta f,h\rangle:=\langle T,f\star_\theta h\rangle\text{ and } \langle f\star_\theta T,h\rangle:=\langle T,h\star_\theta f\rangle,
\end{equation*}
which is compatible with the case $T\in\caS(\gS)$.

\begin{definition}
The {\defin multiplier space} associated to $(\caS(\gS),\star_\theta)$ is defined as
\begin{equation*}
\caM_{\star_\theta}(\gS):=\{T\in\caS'(\gS),\ f\mapsto T\star_\theta f\text{ and } f\mapsto f\star_\theta T\text{ are continuous from }\caS(\gS) \text{ into itself}\}.
\end{equation*}
We can equipy this space with the topology associated to the seminorms:
\begin{equation*}
\norm T\norm_{B,j,P,L}=\sup_{f\in B}\norm T\star f\norm_{j,P}\,\text{ and }\, \norm T\norm_{B,j,P,R}=\sup_{f\in B}\norm f\star T\norm_{j,P}
\end{equation*}
where $B$ is a bounded subset of $\caS(\gS)$, $j\in\gN^{2n+2}$, $P\in\caU(\ks)$ and $\norm f\norm_{j,P}$ is the Schwartz seminorm introduced in Definition \ref{def-schwartz}. Note that $B$ can be described as a set satisfying $\forall j,P$, $\sup_{f\in B}\norm f\norm_{j,P}$ exists.
\end{definition}

\begin{proposition}
The star-product can be extended to $\caM_{\star_\theta}(\gS)$ by:
\begin{equation*}
\forall S,T\in\caM_{\star_\theta}(\gS),\ \forall f\in\caS(\gS)\quad:\quad \langle S\star_\theta T,f\rangle:=\langle S,T\star_\theta f\rangle=\langle T,f\star_\theta S\rangle.
\end{equation*}
Then, $(\caM_{\star_\theta}(\gS),\star_\theta)$ is an associative Hausdorff locally convex complete and nuclear algebra, with separately continuous product, called the multiplier algebra.
\end{proposition}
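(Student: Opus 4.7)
The plan has four parts.

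\emph{Step 1 (well-definedness).} For $S, T \in \caM_{\star_\theta}(\gS)$ and $f \in \caS(\gS)$, both $T\star_\theta f$ and $f\star_\theta S$ lie in $\caS(\gS)$ by the multiplier property, so each pairing in the definition is finite. The equality $\langle S, T\star_\theta f\rangle = \langle T, f\star_\theta S\rangle$ is, for $S, T$ Schwartz, the cyclicity of the tracial identity \eqref{eq-tracial1}:
\begin{equation*}
\int S\cdot(T\star_\theta f)\dd\mu = \int S\star_\theta T\star_\theta f\dd\mu = \int T\star_\theta f\star_\theta S\dd\mu = \int T\cdot(f\star_\theta S)\dd\mu.
\end{equation*}
For $S, T \in \caM_{\star_\theta}$, the equality extends by density of $\caS(\gS)\star_\theta\caS(\gS)$ in $\caS(\gS)$ (a consequence of Proposition \ref{prop-factor}) combined with uniqueness of the extension: the distribution $S\star_\theta T$ is determined by its values on products $f\star_\theta h$ via either extension rule $\langle T\star f, h\rangle = \langle T, f\star h\rangle$ or $\langle f\star S, h\rangle = \langle S, h\star f\rangle$.

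\emph{Step 2 ($S\star_\theta T \in \caM$; associativity).} Writing $L_T : f\mapsto T\star_\theta f$ and $R_T : f\mapsto f\star_\theta T$ for the continuous multiplication operators on $\caS$, unwinding duality gives
\begin{equation*}
\langle (S\star_\theta T)\star_\theta f, h\rangle = \langle S, T\star_\theta(f\star_\theta h)\rangle = \langle S, (T\star_\theta f)\star_\theta h\rangle = \langle S\star_\theta(T\star_\theta f), h\rangle,
\end{equation*}
so $(S\star_\theta T)\star_\theta f = S\star_\theta(T\star_\theta f) \in \caS(\gS)$ by two applications of the multiplier property, and $L_{S\star_\theta T} = L_S\circ L_T$. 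Symmetrically $R_{S\star_\theta T} = R_T\circ R_S$. Both compositions are continuous on $\caS$, so $S\star_\theta T \in \caM_{\star_\theta}(\gS)$. Associativity on $\caM_{\star_\theta}$ follows from the same rewriting applied to triples.

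\emph{Step 3 (Hausdorff, locally convex, complete, separately continuous).} Hausdorffness and local convexity are immediate from the seminorms $\|\cdot\|_{B,j,P,L}$ and $\|\cdot\|_{B,j,P,R}$, which separate points via the density argument of Step 1. Separate continuity is built in: $L_S$ sends bounded sets of $\caS$ to bounded sets, so $\|S\star_\theta T\|_{B,j,P,L} = \|T\|_{L_S(B),j,P,L}$, and symmetrically on the right. For completeness, given a Cauchy net $(T_\alpha)$ in $\caM_{\star_\theta}(\gS)$, Fr\'echet completeness of $\caS(\gS)$ provides limit operators $L := \lim L_{T_\alpha}$ and $R := \lim R_{T_\alpha}$ on $\caS$; they satisfy the bi-multiplier compatibility $R(f)\star_\theta g = f\star_\theta L(g)$ in the limit, so come from a common distribution $T\in\caM_{\star_\theta}(\gS)$.

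\emph{Step 4 (nuclearity, the main obstacle).} The strategy is to embed $\caM_{\star_\theta}(\gS)$ as a closed subspace of $\caL_b(\caS(\gS),\caS(\gS))\oplus\caL_b(\caS(\gS),\caS(\gS))$ via $T\mapsto(L_T, R_T)$, the closure being cut out by the compatibility $R_T(f)\star_\theta g = f\star_\theta L_T(g)$. By the Grothendieck kernel theorem for the nuclear Fr\'echet space $\caS(\gS)$, each summand identifies with $\caS(\gS)\hat\otimes\caS'(\gS)$, which is nuclear as a completed tensor product of a nuclear Fr\'echet space with its nuclear strong dual. Since closed subspaces of nuclear spaces are nuclear, $\caM_{\star_\theta}(\gS)$ inherits nuclearity. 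The delicate technical check --- and the principal obstacle --- is to verify that the multiplier topology (seminorms $\|\cdot\|_{B,j,P,L/R}$) coincides with the subspace topology inherited from this nuclear ambient space, which requires a careful comparison against the Schwartz seminorms of Definition \ref{def-schwartz}.
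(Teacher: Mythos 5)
Your proposal is correct and follows essentially the same route as the paper: the extension and associativity are obtained by evaluating the distributions against Schwartz functions and invoking the factorization property (Proposition \ref{prop-factor}), while the topological assertions come from realizing the multiplier space inside $\caL(\caS(\gS))$ with its strong topology (the paper writes $\caM_{\star_\theta}(\gS)=\caM_L\cap\caM_R$ and cites Treves, Propositions 50.1, 50.5 and 50.6, for the nuclearity of $\caL(\caS(\gS))$ when $\caS(\gS)$ is nuclear Fr\'echet --- the same mechanism as your kernel-theorem identification $\caL(\caS(\gS))\simeq\caS(\gS)\hat\otimes\caS'(\gS)$). The one place you diverge is in flagging the comparison of topologies in Step 4 as the principal obstacle: this is in fact immediate, because the seminorms $\norm T\norm_{B,j,P,L}=\sup_{f\in B}\norm T\star f\norm_{j,P}$ and $\norm T\norm_{B,j,P,R}$ are by definition the pullbacks of the strong-topology seminorms of $\caL(\caS(\gS))$ under $T\mapsto L_T$ and $T\mapsto R_T$.
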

\begin{proof}
For the extension of the star-product and its associativity, we can show successively $\forall S,T\in\caM_{\star_\theta}(\gS)$, $\forall f,h\in\caS(\gS)$,
\begin{equation*}
(T\star_\theta f)\star_\theta h=T\star_\theta(f\star_\theta h),\quad (S\star_\theta T)\star_\theta f=S\star_\theta (T\star_\theta f),\quad (T_1\star_\theta T_2)\star_\theta T_3= T_1\star_\theta (T_2\star_\theta T_3),
\end{equation*}
each time by evaluating the distribution on a Schwartz function $\varphi\in\caS(\gS)$ and by using the factorization property (Proposition \ref{prop-factor}).

$\caM_{\star_\theta}(\gS)$ is the intersection of $\caM_L$, the left multipliers, and $\caM_R$, the right multipliers. By definition, each space $\caM_L$ and $\caM_R$ is topologically isomorphic to $\caL(\caS(\gS))$ endowed with the strong topology. Since $\caS(\gS)$ is Fr\'echet and nuclear, so is $\caL(\caS(\gS))$, as well as $\caM_L$, $\caM_R$ and finally $\caM_{\star_\theta}(\gS)$ (see \cite{Treves:1967} Propositions 50.1, 50.5 and 50.6).
\end{proof}

Due to the definition of $\caS(G)$ for a normal $j$-group $G=G_1\ltimes \gS_2$ and to the expression of the star-product \eqref{eq-starprodnorm}, the multiplier space associated to $(\caS(G),\star_\theta)$ takes the form:
\begin{equation}
\caM_{\star_\theta}(G)=\caM_{\star_\theta}(G_1)\hat\otimes\caM_{\star_\theta}(\gS_2)\label{eq-multnorm}
\end{equation}
and is also an associative Hausdorff locally convex complete and nuclear algebra, with separately continuous product. Remember that we have identified co-adjoint orbits $\caO$ described in Proposition \ref{prop-moment} with the group $G$ itself, so that we can speak also about the multiplier algebra $\caM_{\star_\theta}(\caO)$.

\subsection{Non-formal star-exponential}

\begin{theorem}
Let $G$ be a normal $j$-group and $\star_\theta$ the star-product \eqref{eq-starprodnorm}. Then for any $g\in G$, the star-exponential \eqref{eq-comput-starexp} $\caE^\caO_g$ lies in the multiplier algebra $\caM_{\star_\theta}(\caO)$.
\end{theorem}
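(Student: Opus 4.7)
The plan is to proceed by induction on the number $N$ of elementary factors in the decomposition $G = \gS_1 \ltimes_{\rho_1} \cdots \ltimes_{\rho_{N-1}} \gS_N$ of Remark \ref{rmk-decomp}, exploiting the tensor-product identification \eqref{eq-multnorm} of the multiplier algebra together with the factorization of the star-exponential provided by Theorem \ref{thm-exprexp}.

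In the base case $N=1$, the group $G = \gS$ is elementary and $\caE_{g_0}$ is given explicitly by \eqref{eq-comput-starexpelem}. Since $|\caE_{g_0}(g)|$ reduces to the constant $\sqrt{\cosh(a_0)}\cosh(a_0/2)^{n}$, we already have $\caE_{g_0}\in\caS'(\gS)$. Conceptually, the defining identity $\Omega_{\bfm_0}(\caE_{g_0}) = U(g_0)$ yields
\[
\caE_{g_0}\star_\theta f \;=\; \Omega_{\bfm_0}^{-1}\bigl(U(g_0)\,\Omega_{\bfm_0}(f)\bigr),
\]
reducing the multiplier property to invariance of $\Omega_{\bfm_0}(\caS(\gS))$ under left and right composition with the unitary $U(g_0)$. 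To obtain a proof with the required topological continuity, the route I would pursue is to insert $\caE_{g_0}$ directly into the kernel form \eqref{eq-starprod}. This produces an oscillatory integral with amplitude $c(g_0)\,K_\gS$ and combined phase $[\phi_{g_0}(g_1) - 2 S_\gS(g,g_1,g_2)]/\theta$, where $\phi_{g_0}$ denotes the phase in \eqref{eq-comput-starexpelem}. The oscillatory-integral machinery of Section \ref{subsec-schwartz} then applies: one builds differential operators analogous to $\caO_{a_i},\caO_{x_i},\caO_{\ell_i}$ of \eqref{eq-osc}, now adapted to the combined phase, which leave $e^{i(\text{phase})}$ invariant and yield arbitrary polynomial decay after integration by parts. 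This exhibits $\caE_{g_0}\star_\theta f$ as an element of $\caS(\gS)$, continuous in $f$ for the seminorms of Definition \ref{def-schwartz}; the argument for $f\star_\theta\caE_{g_0}$ is entirely symmetric.

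For the inductive step $N\ge 2$, Theorem \ref{thm-exprexp} provides
\[
\caE^\caO_g(g') \;=\; \caE^{\caO_1}_{g_1}(g'_1)\,\caE^{\caO_2}_{(g_1,g_2)}(g'_2),
\]
a pure tensor of distributions in the variable $g'=(g'_1,g'_2)$. The first factor lies in $\caM_{\star_\theta}(\caO_1)$ by the induction hypothesis. The second factor, the twisted elementary star-exponential, has prefactor depending only on $(g_1,g_2)$ and a phase that --- though now quadratic in $x'_2$ through $M_\rho(g_1)$ --- is of exactly the same analytic type as in the base case. The identical oscillatory-integral argument, carried out on $\gS_2$ with phase $[\tfrac12\tilde x^T M_\rho(g_1)\tilde x + (\text{linear in } x'_2,\ell'_2) - 2 S_{\gS_2}]/\theta$, yields $\caE^{\caO_2}_{(g_1,g_2)}\in\caM_{\star_\theta}(\gS_2)$. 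The identification \eqref{eq-multnorm} then concludes $\caE^\caO_g\in\caM_{\star_\theta}(\caO)$.

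The main obstacle, in either the base or the inductive step, is the construction of phase-invariant differential operators producing decay in the \emph{external} variable $g$ as well as in the integration variables $g_1,g_2$, so that the oscillatory integral defines a genuine Schwartz function rather than merely a tempered distribution, with continuous dependence on $f$ in the prescribed family of seminorms. The twisted case required in the inductive step adds only the routine bookkeeping of the $\rho(g_1)$-dependent coefficients appearing in $M_\rho(g_1)$ and in the affine shift $\tilde x$.
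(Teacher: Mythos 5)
Your outer structure (recursion over the elementary factors, the tensor decomposition \eqref{eq-multnorm}, and the factorization of $\caE^\caO_g$ from Theorem \ref{thm-exprexp}) is exactly the paper's. Where you diverge is in the core analytic step, and there your argument is not actually carried out. You propose to insert $\caE_{g_0}$ into the kernel \eqref{eq-starprod} and run the oscillatory-integral machinery on the \emph{combined} phase $\phi_{g_0}(g_1)-2S_\gS(g,g_1,g_2)$, but you then name the construction of the corresponding phase-invariant operators --- the ones that must produce decay in the external variable and survive the extra terms $e^{a_0-2a_1}\ell_0$, $e^{a_0/2-a_1}\omega_0(x_0,x_1)$ and, in the twisted case, the quadratic form $\tfrac12\tilde x^TM_\rho(g_1)\tilde x$ --- as ``the main obstacle'' and leave it unresolved. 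That construction \emph{is} the theorem: the operators of \eqref{eq-osc} are tailored to the specific phase $S_\gS$, their integrations by parts inject polynomially growing factors whenever a derivative lands on the new phase, and verifying that the gained denominators dominate this growth (uniformly enough to get continuity in the seminorms of Definition \ref{def-schwartz}) is precisely the work being deferred. As written, the proposal is a plan with the decisive estimate missing.

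The paper avoids this entirely by a different mechanism, which is the idea you are missing: the Factorization Proposition \ref{prop-factor}. Writing $\caS(\gS)=\caS(A)\hat\otimes\caS(V)\hat\otimes\caS(\gR E)$ and using that $f\star_\theta h=f\fois h$ when $f\in\caS(A)$, one gets an embedding $\caM(\caS(A))\hookrightarrow\caM_{\star_\theta}(\gS)$ of \emph{pointwise} multipliers in the $a$-direction, and similarly for the $\gR E$-direction; the factors $\sqrt{\cosh(a_0)}\cosh(a_0/2)^{n}e^{\frac{i}{\theta}(2\sinh(a_0)\ell'+e^{a_0-2a'}\ell_0)}$ of \eqref{eq-comput-starexpelem} are handled there with no oscillatory analysis at all, since they are smooth with polynomially bounded derivatives in the relevant coordinates. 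The only genuinely noncommutative direction is $V$, where $\star_\theta$ restricts to the Moyal product and the restriction of $\caE^{\caO_2}_{(g_1,g_2)}$ is an imaginary exponential of a polynomial of degree at most $2$, a classical multiplier of the Moyal algebra. The conclusion then follows from $\caE^{\caO_2}\in\caM(\caS(A))\hat\otimes\caM_{\star_\theta}(\caS(V))\hat\otimes\caM(\caS(\gR E))$. If you want to salvage your direct route you must actually exhibit the adapted operators and prove the decay estimates; otherwise the factorization argument is both shorter and the one that closes the gap.
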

\begin{proof}
Let us focus for the moment on the case of the elementary group $\caS$. The general case can then be obtained recursively due to the structure of the star-exponential \eqref{eq-comput-starexp} and of the multiplier algebra \eqref{eq-multnorm}. We use the same notations as before. For $f,h\in\caS(A)$, $f\star_\theta h=f\fois h$. If $T$ belongs to the multiplier space $\caM(\caS(A))$ of $\caS(A)$ for the usual commutative product, we have in particular $T\in\caS'(\gS)$ and by duality $T\star_\theta f=T\fois f$. Then,
\begin{equation*}
\forall f\in\caS(A),\ \forall h\in\caS(\gR^{2n+1})\quad:\quad T\star_\theta (f\star_\theta h)=(T\fois f)\star_\theta h.
\end{equation*}
By the factorization property (Proposition \ref{prop-factor}), it means that $T\in\caM_{\star_\theta}(\gS)$, and we have an embedding $\caM(\caS(A))\hookrightarrow\caM_{\star_\theta}(\gS)$. If we note as before $\gR^{2n+1}=V\oplus\gR E$, we can show in the same way that there is another embedding $\caM(\caS(\gR E))\hookrightarrow\caM_{\star_\theta}(\gS)$. Since $x'\in V\mapsto\caE^{\caO_2}_{(g_1,g_2)}(0,x',0)$ is an imaginary exponential of a polynom of degree less or equal than 2 in $x'$ and since the product $\star_\theta$ coincides with the Moyal product on $V$, it turns out that $x'\in V\mapsto\caE^{\caO_2}_{(g_1,g_2)}(0,x',0)$ is in $\caM_{\star_\theta}(\caS(V))$. Then, the star-exponential $\caE^{\caO_2}$ in \eqref{eq-comput-starexp} lies in $\caM(\caS(A))\hat\otimes \caM_{\star_\theta}(\caS(V))\hat\otimes\caM(\caS(\gR E))$, and it belongs also to $\caM_{\star_\theta}(\gS)$.
\end{proof}

\section{Adapted Fourier transformation}

\subsection{Definition}

As in the case of the Weyl-Moyal quantization treated in \cite{AC,Arnal:1988}, we can introduce the notion of adapted Fourier transformation. For normal $j$-groups $G=G_1\ltimes\gS_2$, which are not unimodular, it is relevant for that to introduce a {\defin modified star-exponential}
\begin{equation*}
\tilde\caE^{\caO}_g(g'):=\tr(U(g)d^{\frac12}\Omega(g')).
\end{equation*}
where $d$ is the formal dimension operator associated to $U$ (see \cite{Duflo:1976,Gayral:2008fo}) and $\caO$ is the co-adjoint orbit determining the irreducible representation $U$. Such an operator $d$ is used to regularize the expressions since $\int f(g)U(g)d^{\frac 12}$ is a Hilbert-Schmidt operator whenever $f$ is in $L^2(G)$. So the trace in the definition of $\tilde\caE^{\caO}$ is understood as a distribution only in the variable $g'\in \caO$.

By denoting $\Delta$ the modular function, defined by $\dd^L(g\fois g')=\Delta(g')\dd^L g$, whose computation gives
\begin{equation*}
\Delta(g)=\Delta_1(g_1)\Delta_2(g_2),\text{ with }\Delta_2(a_2,x_2,\ell_2)=e^{-2(n_2+1)a_2},
\end{equation*}
the operator $d$ is defined (up to a positive constant) by the relation:
\begin{equation*}
\forall g\in G\quad:\quad U(g)d U(g)^{-1}=\Delta(g)^{-1}d.
\end{equation*}
Since $\caR(g_1)d_2\caR(g_1)^{-1}=d_2$, it can therefore be expressed as $d=d_1\otimes d_2$, for $d_i$ the dimension operator associated to $U_i$, and with $\forall\varphi_2\in\ehH_2$, $\forall (a_0,v_0)\in Q_2$,
\begin{equation*}
(d_2\varphi_2)(a_0,v_0)=\kappa_{2}^2 e^{-2(n_2+1)a_0}\varphi_2(a_0,v_0)
\end{equation*}
where we recall that $\dim(\gS_2)=2(n_2+1)$. Note that $d_2$ is independent here of the choice of the irreducible representation $U_2$ ($\ee_2=\pm1$).

\begin{proposition}
\label{prop-adapt-mod}
The expression of the modified star-exponential can then be computed in the same notations as for Theorem \ref{thm-exprexp}:
\begin{multline*}
\tilde\caE^{\caO}_g(g')=\tilde\caE^{\caO_1}_{g_1}(g'_1) \frac{e^{(n_2+1)(\frac{a_2}{2}-a_2')}}{(\pi\theta)^{n_2+1}}\frac{\sqrt{\cosh(a_2)}\cosh(\frac{a_2}{2})^{n_2}|\det(\rho_+(g_1))|^{\frac12}}{|\det(1+\rho_+(g_1))|}\\
\exp(\frac{i\ee_2}{\theta}\Big[ 2\sinh(a_2)\ell'_2
+e^{a_2-2a_2'}\ell_2+X^TA_\rho X\Big]).
\end{multline*}
\end{proposition}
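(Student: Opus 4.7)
The plan is to parallel the derivation of Theorem \ref{thm-exprexp}, inserting $d^{1/2}$ inside the trace and carefully tracking the new multiplicative factors. First, I would reduce to a product of traces. Since $d = d_1 \otimes d_2$ and since the text establishes $\caR(g_1)\, d_2\, \caR(g_1)^{-1} = d_2$, the operator $d_2^{1/2}$ commutes with $\caR(g_1)$. Combined with the factorizations $U(g) = U_1(g_1) \otimes U_2(g_2) \caR(g_1)$ and $\Omega(g') = \Omega_1(g'_1) \otimes \Omega_2(g'_2)$, and using \eqref{eq-qunorm-trace}, this yields
\[
\tilde\caE^{\caO}_g(g') \;=\; \tr\bigl(U_1(g_1) d_1^{1/2} \Omega_{1,\bfm_0^1}(g'_1)\bigr)\cdot \tr\bigl(U_2(g_2) d_2^{1/2} \caR(g_1) \Omega_{2,\bfm_0^2}(g'_2)\bigr).
\]
The first factor is $\tilde\caE^{\caO_1}_{g_1}(g'_1)$ by the recursive definition, supplying the first piece of the claimed formula.

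Next, I would compute the second trace via its integral kernel on $\ehH_2 = L^2(Q_2)$. Combining the explicit expressions \eqref{eq-unitindrep}, \eqref{eq-qumap} and the formula from Proposition \ref{prop-metaplec} for $U_2$, $\Omega_{2,\bfm_0^2}$ and $\caR(g_1)$, together with the action $(d_2^{1/2}\varphi)(a_0,v_0) = \kappa_2 e^{-(n_2+1)a_0}\varphi(a_0,v_0)$, gives the kernel of $U_2(g_2) d_2^{1/2} \caR(g_1)\Omega_{2,\bfm_0^2}(g'_2)$. Taking the trace amounts to imposing $(a'_0,v'_0) = (a_0,v_0)$ and integrating. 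The delta distribution in the $a_0$-direction coming from $\Omega_{2,\bfm_0^2}(g'_2)$, shifted by $a_2$ under the action of $U_2(g_2)$, localizes $a_0$ at $a_0^\ast = a'_2 + a_2/2$; its Jacobian absorbs the factor $2$ in front of the prefactor $2^{n_2+1}$ of $\Omega_{2,\bfm_0^2}$. At this localized value one has $a'_2 - a_0^\ast = -a_2/2$, so the amplitude factor of $\Omega_{2,\bfm_0^2}$ collapses to $\sqrt{\cosh(a_2)}\cosh(a_2/2)^{n_2}$, and the $d_2^{1/2}$-contribution becomes $e^{-(n_2+1)(a_0^\ast - a_2)} = e^{(n_2+1)(a_2/2 - a'_2)}$, which is precisely the new exponential factor in the statement.

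Then I would perform the remaining $v_0$-integration. This is a Gaussian integral whose quadratic form is not modified by the insertion of $d_2^{1/2}$ (which depends only on $a_0$), so it reproduces exactly the same matrix $A_\rho$ and exponential $X^T A_\rho X$ as in \eqref{eq-comput-starexp}, together with the factor $|\det(\rho_+(g_1))|^{1/2}/|\det(1+\rho_+(g_1))|$. Collecting everything, the residual $\kappa_2 = (2^{n_2}(\pi\theta)^{n_2+1})^{-1}$ combines with the $2^{n_2+1}$ prefactor and the $1/2$ from the $a_0$-delta to produce the normalization $(\pi\theta)^{-(n_2+1)}$ in the statement. The phases corresponding to $2\sinh(a_2)\ell'_2$ and $e^{a_2 - 2a'_2}\ell_2$ arise unchanged from the oscillatory terms in $\Psi$ and $\Phi$, as in the proof of Theorem \ref{thm-exprexp}.

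The main obstacle is purely a bookkeeping one: verifying that the insertion of $d_2^{1/2}$ leaves the $v_0$-Gaussian identical to that of Theorem \ref{thm-exprexp} (so that $A_\rho$ is unchanged) and that, after the $a_0$-localization and the composition shifts induced by $U_2(g_2)$, the new multiplicative factor evaluates to $e^{(n_2+1)(a_2/2 - a'_2)}$ with the exact powers of $2$ and $\pi\theta$ as claimed. Once this is checked, the degenerate cases $\det(1 + \rho_+(g_1)) = 0$ are handled exactly as in the remark following Theorem \ref{thm-exprexp} by reinterpreting the Gaussian as a distributional delta, with no additional input.
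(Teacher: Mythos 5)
Your proposal is correct and follows exactly the route the paper intends: the paper gives no separate proof of Proposition \ref{prop-adapt-mod}, deferring to the computation of Theorem \ref{thm-exprexp} with $d^{1/2}$ inserted, and your bookkeeping (factorizing the trace via $d=d_1\otimes d_2$ and $\caR(g_1)d_2\caR(g_1)^{-1}=d_2$, localizing $a_0$ at $a_2'+a_2/2$, and noting that $d_2^{1/2}$ only contributes the scalar $\kappa_2\,e^{(n_2+1)(a_2/2-a_2')}$ while leaving the $v_0$-Gaussian and hence $A_\rho$ untouched) reproduces the stated prefactor $e^{(n_2+1)(a_2/2-a_2')}/(\pi\theta)^{n_2+1}$ precisely. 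The only cosmetic difference is that you evaluate the trace on the kernel diagonal rather than via the coherent-state resolution of identity \eqref{eq-tracesymb} used in the proof of Theorem \ref{thm-exprexp}; the two are equivalent here.
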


\begin{definition}
We can now define the {\defin adapted Fourier transformation}: for $f\in\caS(G)$ and $g'\in \caO$,
\begin{equation*}
\caF_\caO(f)(g'):=\int_G f(g)\tilde\caE^{\caO}_g(g')\dd^Lg.
\end{equation*}
\end{definition}
We see that this definition is a generalization of the usual (symplectic) Fourier transformation. For example in the case of the group $\gR^{2}$, the star-exponential associated to the Moyal product is indeed given by $\exp(\frac{2i}{\theta}(a\ell'-a'\ell))$.

\subsection{Fourier analysis}
\label{subsec-fourier}

\begin{proposition}
\label{prop-fourier-orth}
The modified star-exponential satisfies an orthogonality relation: for $g',g''\in G$,
\begin{equation*}
\int_G \overline{\tilde\caE^{\caO}_g(g')}\tilde\caE^{\caO}_g(g'')\dd^Lg= \frac{1}{\Delta(g'')}\delta(g''\fois (g')^{-1})
\end{equation*}
\end{proposition}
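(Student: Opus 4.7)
The plan is to reduce the desired orthogonality to the Duflo--Moore orthogonality relations for the square-integrable representation $U$, combined with the symbol-map identity $\tr\bigl(\Omega_{\bfm_0}(f)\Omega_{\bfm_0}(g'')\bigr)=f(g'')$ already established in Section \ref{subsec-qunorm}. The role of the factor $d^{1/2}$ inserted in the definition of $\tilde\caE^\caO$ is precisely to make the Duflo--Moore weights cancel, so the computation should ultimately be ``formal Plancherel".

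First, I would choose an orthonormal basis $\{e_n\}$ of $\ehH$ and expand each trace as a sum of matrix coefficients:
\begin{equation*}
\tilde\caE^\caO_g(g')=\sum_n\langle U(g)\,d^{1/2}\Omega(g')\,e_n,e_n\rangle,\qquad \tilde\caE^\caO_g(g'')=\sum_m\langle U(g)\,d^{1/2}\Omega(g'')\,e_m,e_m\rangle.
\end{equation*}
Next, I would derive from the resolution of identity of Proposition \ref{prop-qunorm-resol}, by polarization, the Duflo--Moore orthogonality relation
\begin{equation*}
\int_G\langle U(g)\phi_1,\psi_1\rangle\,\overline{\langle U(g)\phi_2,\psi_2\rangle}\,\dd^Lg\;=\;\kappa^{-1}\langle d^{-1/2}\phi_2,d^{-1/2}\phi_1\rangle\,\langle\psi_1,\psi_2\rangle,
\end{equation*}
valid on the domain of $d^{-1/2}$ (the normalization constant is pinned down by taking $\phi_1=\phi_2=\varphi$, $\psi_1=\psi_2=\chi$ and comparing with the resolution of identity, using the identity $\|\varphi\|_w^2=\kappa^2\|d^{-1/2}\varphi\|^2$ that follows from the explicit expression of $d_2$ given before Proposition \ref{prop-adapt-mod}).

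Plugging the expansions into the left hand side and applying this orthogonality relation to each pair of matrix coefficients gives, after collapsing the resulting $\langle e_m,e_n\rangle=\delta_{mn}$, the expression
\begin{equation*}
\int_G\overline{\tilde\caE^\caO_g(g')}\,\tilde\caE^\caO_g(g'')\,\dd^Lg\;=\;\kappa^{-1}\sum_n\langle d^{-1/2}d^{1/2}\Omega(g')e_n,d^{-1/2}d^{1/2}\Omega(g'')e_n\rangle\;=\;\kappa^{-1}\tr\bigl(\Omega(g')\Omega(g'')\bigr),
\end{equation*}
using self-adjointness of $\Omega(g')$. The symbol identity of Section \ref{subsec-qunorm} then identifies the distribution $g'\mapsto\tr\bigl(\Omega_{\bfm_0}(g')\Omega_{\bfm_0}(g'')\bigr)$ with $\kappa^{-1}$ times a Dirac mass at $g'=g''$ relative to the Liouville measure $\dd\mu$, which on $\caO\simeq G$ coincides with the left Haar measure $\dd^Lg$. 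The conversion of that Dirac mass into the stated form $\Delta(g'')^{-1}\delta\bigl(g''\fois(g')^{-1}\bigr)$ is a pure change of variables: setting $h=g''\fois(g')^{-1}$ and using $\dd^L(hg')=\Delta(g')\dd^Lh$ one has $\int f(g'')\delta(g''\fois(g')^{-1})\dd^Lg''=\Delta(g')f(g')$, and on the support of the delta $\Delta(g')=\Delta(g'')$.

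The main obstacle is rigor rather than strategy. The operator $\Omega(g')$ is not trace class (only its smearings against Schwartz functions are), $d$ is unbounded, and the matrix-coefficient sums must be interpreted in the distributional sense in the pair $(g',g'')\in\caO\times\caO$, as already indicated in the definition of $\tilde\caE^\caO$ just before the proposition. I would carry out the argument by first testing both sides against Schwartz functions $f_1(g'),f_2(g'')$ to reduce everything to trace-class operators $\int f_1(g')\,d^{1/2}\Omega(g')\,\dd\mu(g')$ and $\int f_2(g'')\,d^{1/2}\Omega(g'')\,\dd\mu(g'')$ (the Hilbert--Schmidtness of such integrals being precisely the property of $d^{1/2}$ recalled before Proposition \ref{prop-adapt-mod}), then pass to the distributional limit once the equality of the resulting sesquilinear forms is established.
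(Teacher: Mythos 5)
Your argument is correct in strategy and takes a genuinely different route from the paper's. The paper proves the proposition by direct computation: it inserts the closed-form expression of $\tilde\caE^{\caO}$ from Proposition \ref{prop-adapt-mod}, integrates over $\ell_2$ (producing $\delta(a_2'-a_2'')$), then over $(v_2,w_2)$ (where the oscillatory Gaussian degenerates and its normalization exactly cancels the prefactor $|\det(1+\rho_+(g_1))|^2/|\det(\rho_+(g_1))|$), then over $a_2$, and finally recurses on the factor $G_1$ of the semidirect decomposition. Your route through a Duflo--Moore orthogonality relation combined with the symbol identity $\tr\bigl(\Omega_{\bfm_0}(g')\Omega_{\bfm_0}(g'')\bigr)=\kappa^{-1}\,\delta_{g''}$ (Dirac mass with respect to $\dd\mu=\dd^Lg$) is structurally cleaner: it explains \emph{why} the insertion of $d^{1/2}$ is precisely the right twist for a nonunimodular group, why the unmodified $\caE^{\caO}$ cannot satisfy such a relation, and it avoids all Gaussian bookkeeping. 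What the paper's explicit computation buys in exchange is that it is recycled almost verbatim in the proof of the dual completeness relation \eqref{eq-fourier-orth2} inside Theorem \ref{thm-four-inv}, where the sum over the sign choices $(\ee)$ is essential and a single-representation orthogonality argument no longer suffices. Your regularization scheme (smearing against Schwartz functions to land in Hilbert--Schmidt operators before passing to the distributional limit) is the right one and matches how the paper itself interprets these traces.

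One constant must be corrected. The resolution of identity of Proposition \ref{prop-qunorm-resol} gives $\int_G|U(g)\varphi\rangle\langle U(g)\varphi|\,\dd^Lg=\kappa^{-1}\norm\varphi\norm^2_w\,\gone_\ehH$, and since $d_2=\kappa_2^2\,e^{-2(n_2+1)a_0}$ yields $\norm\varphi\norm^2_w=\kappa^2\norm d^{-1/2}\varphi\norm^2$ (as you correctly note), the pinned-down orthogonality relation carries the constant $\kappa$, not $\kappa^{-1}$, in front of $\langle d^{-1/2}\phi_2,d^{-1/2}\phi_1\rangle\langle\psi_1,\psi_2\rangle$. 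Your left-hand side then evaluates to $\kappa\,\tr\bigl(\Omega(g')\Omega(g'')\bigr)=\kappa\cdot\kappa^{-1}\delta_{g''}=\delta_{g''}$, which is exactly the stated right-hand side after the (correct) conversion $\delta_{g''}=\Delta(g'')^{-1}\delta\bigl(g''\fois(g')^{-1}\bigr)$; as written, your version of the constant would leave you with $\kappa^{-2}$ times the claimed answer. This is an arithmetic slip rather than a flaw in the method, but as stated the normalizations do not close.
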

Note that $\Delta(g''_2)^{-1}\delta(g''_2\fois (g'_2)^{-1})=\delta(a''_2-a'_2)\delta(x''_2-x'_2)\delta(\ell''_2-\ell'_2)$. This orthogonality relation does not hold for the unmodified star-exponential.
\begin{proof}
We use the expression of Proposition \ref{prop-adapt-mod}:
\begin{multline*}
\int_G \overline{\tilde\caE^{\caO}_g(g')}\tilde\caE^{\caO}_g(g'')\dd^Lg= \int_{G_1} \overline{\tilde\caE^{\caO_1}_{g_1}(g'_1)}\tilde\caE^{\caO_1}_{g_1}(g''_1)\int_{\gS_2} \frac{e^{(n_2+1)(a_2-a_2'-a_2'')}}{(\pi\theta)^{2(n_2+1)}}\\
\frac{|\det(\rho_+(g_1))|\cosh(a_2)\cosh(\frac{a_2}{2})^{2n_2}}{|\det(1+\rho_+(g_1))|^2}e^{\frac{i\ee_2}{\theta}(2\sinh(a_2)(\ell''_2-\ell'_2)
+e^{a_2}(e^{-2a_2''}-e^{-2a_2'})\ell_2)}\\
e^{\frac{i\ee_2}{\theta}((X'')^TA_\rho X''-(X')^TA_\rho X')}\dd^Lg_2\dd^Lg_1
\end{multline*}
with
\begin{equation*}
X'=\begin{pmatrix} \frac{1}{\sqrt 2}e^{\frac{a_2}{2}-a'_2}v_2 \\ \frac{1}{\sqrt 2}e^{\frac{a_2}{2}-a'_2}w_2 \\ \sqrt 2 \cosh(\frac{a_2}{2})v'_2 \\ \sqrt 2 \cosh(\frac{a_2}{2})w'_2\end{pmatrix}\quad\text{ and }\quad X''=\begin{pmatrix} \frac{1}{\sqrt 2}e^{\frac{a_2}{2}-a''_2}v_2 \\ \frac{1}{\sqrt 2}e^{\frac{a_2}{2}-a''_2}w_2 \\ \sqrt 2 \cosh(\frac{a_2}{2})v''_2 \\ \sqrt 2 \cosh(\frac{a_2}{2})w''_2\end{pmatrix}.
\end{equation*}
Integration over $\ell_2$ leads to the contribution $\delta(a_2'-a_2'')$. Since $A_\rho$ depends only on $g_1$, and $a_2'=a_2''$, we see that the gaussian part in $(v_2,w_2)$ disappears and integration over these variables brings $\frac{|\det(1+\rho_+(g_1))|^2}{|\det(\rho_+(g_1))|}\delta(v_2'-v_2'')\delta(w_2'-w_2'')$. Eventually, integration on $a_2$ can be performed and we find
\begin{equation*}
\int_G \overline{\tilde\caE^{\caO}_g(g')}\tilde\caE^{\caO}_g(g'')\dd^Lg= \int_{G_1} \overline{\tilde\caE^{\caO_1}_{g_1}(g'_1)}\tilde\caE^{\caO_1}_{g_1}(g''_1)\dd^Lg_1 \, \Delta(g''_2)^{-1}\delta(g''_2\fois (g'_2)^{-1})
\end{equation*}
which leads to the result recursively.
\end{proof}

\begin{proposition}
The adapted Fourier transformation satisfies the following property: $\forall f_1,f_2\in\caS(G)$,
\begin{equation*}
\caF_\caO(f_1\times f_2)=\frac{\Delta^{\frac12}}{\kappa}\big(\Delta^{-\frac12}\caF_\caO(f_1)\big)\star_\theta \big(\Delta^{-\frac12}\caF_\caO(f_2)\big),
\end{equation*}
with $(f_1\times f_2)(g)=\int_G f_1(g')f_2((g')^{-1}g)\dd^Lg'$ the usual convolution.
\end{proposition}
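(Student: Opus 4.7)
The plan is to reinterpret the adapted Fourier transform operator-theoretically. Setting $\pi(f):=\int_G f(g)\,U(g)\,\dd^L g$ for the integrated representation, an exchange of the trace and the integral in the definition of $\caF_\caO$ gives
\[\caF_\caO(f)(g_0)=\tr\bigl(\pi(f)\,d^{1/2}\,\Omega_{\bfm_0}(g_0)\bigr),\]
so that $\caF_\caO(f)$ is, up to the $\kappa$-normalization inherent in the symbol/quantization correspondence, the Weyl symbol of the operator $\pi(f)d^{1/2}$. One then exploits the standard convolution--composition property $\pi(f_1\times f_2)=\pi(f_1)\pi(f_2)$, immediate from the change of variables $g=g_1g_2$ and the left-invariance of $\dd^L g$.

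The key algebraic input is the identity
\[d^{\alpha}\,\Omega_{\bfm_0}(g_0)\,d^{\alpha}=\Delta(g_0)^{2\alpha}\,\Omega_{\bfm_0}(g_0),\qquad \alpha\in\gR.\]
Unfolding $\Omega_{\bfm_0}(g_0)=U(g_0)\bfm_0\Sigma U(g_0)^{-1}$, this follows from three elementary facts: (i)~$U(g)dU(g)^{-1}=\Delta(g)^{-1}d$, the defining relation of the formal dimension operator; (ii)~$[\bfm_0,d]=0$, since both $\bfm_0$ and $d$ are multiplication operators on $\ehH=L^2(Q)$; and (iii)~$\Sigma d \Sigma=d^{-1}$, which is checked directly from the explicit formulas $\Sigma\varphi(a,v)=\varphi(-a,-v)$ and $d_2\varphi(a,v)=\kappa_{2}^{2}e^{-2(n_2+1)a}\varphi(a,v)$. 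Specializing to $\alpha=-1/4$ and using the parallel commutation $d^{-1/4}U(g)=\Delta(g)^{-1/4}U(g)d^{-1/4}$ (whence $d^{-1/4}\pi(f)=\pi(\Delta^{-1/4}f)\,d^{-1/4}$) together with cyclicity of the trace yields
\[\Delta(g_0)^{-1/2}\caF_\caO(f)(g_0)=\tr\bigl(\pi(\Delta^{-1/4}f)\,\Omega_{\bfm_0}(g_0)\bigr),\]
which identifies $\Delta^{-1/2}\caF_\caO(f)$ with the symbol of the operator $\pi(\Delta^{-1/4}f)$.

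Since $\Delta$ is a group homomorphism, $\Delta(g_1)\Delta(g_1^{-1}g)=\Delta(g)$ and hence
\[\Delta^{-1/4}(f_1\times f_2)=(\Delta^{-1/4}f_1)\times(\Delta^{-1/4}f_2).\]
Combining this with $\pi(h_1\times h_2)=\pi(h_1)\pi(h_2)$ and the fact that $\Omega_{\bfm_0}$ turns $\star_\theta$ into operator composition, one obtains
\[\Omega_{\bfm_0}\bigl((\Delta^{-1/2}\caF_\caO(f_1))\star_\theta(\Delta^{-1/2}\caF_\caO(f_2))\bigr)=\pi(\Delta^{-1/4}f_1)\,\pi(\Delta^{-1/4}f_2)=\pi\bigl(\Delta^{-1/4}(f_1\times f_2)\bigr),\]
which, by the identification of the previous paragraph, coincides with $\Omega_{\bfm_0}\bigl(\Delta^{-1/2}\caF_\caO(f_1\times f_2)\bigr)$. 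Applying the symbol map and multiplying by $\Delta^{1/2}$ yields the claimed identity, the prefactor $\kappa^{-1}$ being the normalization factor relating the quantization $\Omega_{\bfm_0}(f)=\kappa\int f\,\Omega_{\bfm_0}\,\dd\mu$ to its symbol inverse.

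The main obstacle is the commutation identity of the second paragraph: although each of its inputs (i)--(iii) is elementary, combining them requires careful handling of the unbounded operators $d^{\pm\alpha}$. All the trace manipulations must be justified within a regular class; this is ensured here by working with $f_1,f_2\in\caS(G)$, so that $\pi(f_i)d^{1/2}$ is Hilbert--Schmidt and the relevant traces are absolutely convergent.
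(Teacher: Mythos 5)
Your argument is correct in substance, but it takes a genuinely different route from the paper. The paper's proof is a two-line reduction: it invokes the BCH property $\caE^\caO_g\star_\theta\caE^\caO_{g'}=\caE^\caO_{g\fois g'}$ of Proposition \ref{prop-link3} together with the explicit formula relating $\tilde\caE^\caO$ to $\caE^\caO$ (namely $\tilde\caE^{\caO}_g(g')=\tilde\caE^{\caO_1}_{g_1}(g'_1)\,\kappa_2\,\Delta_2(g_2(g_2')^{-2})^{-1/2}\,\caE^{\caO_2}_g(g_2')$, read off from Proposition \ref{prop-adapt-mod}) to deduce the ``modified BCH property'' $\tilde\caE^{\caO}_{g\fois g'}=\kappa^{-1}\Delta^{1/2}\bigl(\Delta^{-1/2}\tilde\caE^{\caO}_g\bigr)\star_\theta\bigl(\Delta^{-1/2}\tilde\caE^{\caO}_{g'}\bigr)$, after which the proposition follows by integrating against $f_1(g)f_2(g')$ and changing variables in the convolution. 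You instead work at the operator level: identifying $\Delta^{-1/2}\caF_\caO(f)$ with the symbol of $\pi(\Delta^{-1/4}f)$ via the commutation $d^{\alpha}\Omega_{\bfm_0}(g_0)d^{\alpha}=\Delta(g_0)^{2\alpha}\Omega_{\bfm_0}(g_0)$, and then using $\pi(h_1\times h_2)=\pi(h_1)\pi(h_2)$ and the multiplicativity of $\Delta$. This is more structural and avoids the explicit kernel of Theorem \ref{thm-exprexp}; in effect you reprove the modified BCH property abstractly rather than reading it off the formula. One bookkeeping point to fix: with the paper's normalization $(d_2\varphi_2)(a_0,v_0)=\kappa_{2}^2 e^{-2(n_2+1)a_0}\varphi_2(a_0,v_0)$, your identity (iii) is actually $\Sigma d\Sigma=\kappa^4 d^{-1}$, not $d^{-1}$; this extra constant propagates to $d^{1/4}\Omega_{\bfm_0}(g_0)d^{1/4}=\kappa\,\Delta(g_0)^{1/2}\Omega_{\bfm_0}(g_0)$ and is precisely the source of the prefactor $\kappa^{-1}$ in the statement (the symbol/quantization duality $\tr(\Omega_{\bfm_0}(f)\Omega_{\bfm_0}(g))=f(g)$ contributes no further constant), so your attribution of $\kappa^{-1}$ to the quantization normalization should be replaced by this computation.
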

\begin{proof}
Due to the BCH property (see Proposition \ref{prop-link3}) and to the computation of the modified star-exponential $\tilde\caE^{\caO}_g(g')=\tilde\caE^{\caO_1}_{g_1}(g'_1)\frac{\kappa_2}{\Delta_2(g_2(g_2')^{-2})^{\frac12}}\caE^{\caO_2}_g(g_2')$, we have the modified BCH property
\begin{equation*}
\tilde\caE^{\caO}_{g\fois g'}(g'')=\frac{\Delta(g'')^{\frac12}}{\kappa}\big(\Delta^{-\frac12}\tilde\caE^{\caO}_g\big)\star_\theta \big(\Delta^{-\frac12}\tilde\caE^{\caO}_{g'}\big)(g''),
\end{equation*}
which leads directly to the result by using the expression of the adapted Fourier transform and the convolution.
\end{proof}

As in Remark \ref{rmk-decomp}, we consider the co-adjoint orbit $\caO_{(\ee)}=\caO_{1,(\ee_1)}\times\caO_{2,\ee_2}$ of the normal $j$-group $G=G_1\ltimes\gS_2$ determined by the sign choices $(\ee)=((\ee_1),\ee_2)\in(\gZ_2)^N$, with $(\ee_1)\in(\gZ_2)^{N-1}$ and $\ee_2\in\gZ_2$. Due to Proposition \ref{prop-adapt-mod}, we can write the modified star-exponential as
\begin{equation*}
\tilde\caE^{\caO_{(\ee)}}_g(g')=\tilde\caE^{\caO_{1,(\ee_1)}}_{g_1}(g'_1)\,\tilde\caE^{\caO_{2,\ee_2}}_{(g_1,g_2)}(g'_2),
\end{equation*}
with $g=(g_1,g_2)\in G$ and $g'=(g_1',g_2')\in\caO_{(\ee)}$.

\begin{theorem}
\label{thm-four-inv}
We have the following inversion formula for the adapted Fourier transformation: for $f\in\caS(G)$ and $g\in G$,
\begin{equation*}
f(g)=\sum_{(\ee)\in(\gZ_2)^N}\int_{\caO_{(\ee)}}\overline{\tilde\caE^{\caO_{(\ee)}}_g(g')} \caF_{\caO_{(\ee)}}(f)(g')\dd\mu(g').
\end{equation*}
Moreover, the Parseval-Plancherel theorem is true:
\begin{equation*}
\int_G|f(g)|^2\dd^Lg=\sum_{(\ee)\in(\gZ_2)^N}\int_{\caO_{(\ee)}}|\caF_{\caO_{(\ee)}}(f)(g')|^2\dd\mu(g').
\end{equation*}
\end{theorem}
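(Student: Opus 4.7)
The plan is to reduce the inversion formula to a dual completeness relation for the modified star-exponentials, prove this relation recursively along the decomposition $G=G_1\ltimes\gS_2$, and then derive the Parseval-Plancherel identity from the inversion formula by polarization. The guiding observation is that, by the very definition of $\caF_{\caO_{(\ee)}}(f)$, substituting it into the right-hand side of the claimed inversion formula and applying Fubini (justified within the oscillatory integral framework of Section~\ref{subsec-schwartz} since $f\in\caS(G)$) yields
$$
\sum_{(\ee)}\int_{\caO_{(\ee)}}\overline{\tilde\caE^{\caO_{(\ee)}}_g(g')}\,\caF_{\caO_{(\ee)}}(f)(g')\,\dd\mu(g')
=\int_G f(g_0)\,K(g,g_0)\,\dd^L g_0,
$$
where
$$
K(g,g_0):=\sum_{(\ee)\in(\gZ_2)^N}\int_{\caO_{(\ee)}}\overline{\tilde\caE^{\caO_{(\ee)}}_g(g')}\,\tilde\caE^{\caO_{(\ee)}}_{g_0}(g')\,\dd\mu(g').
$$
Thus the entire content of the inversion formula is the distributional identity $K(g,g_0)=\delta^L(g_0\fois g^{-1})$ on $G\times G$, which is the dual counterpart of the orthogonality relation of Proposition~\ref{prop-fourier-orth}.

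To establish this dual identity I proceed by induction on the number $N$ of factors in the decomposition $G=(\gS_1\ltimes\gS_2)\ltimes\dots\ltimes\gS_N$. In the base case $G=\gS$ elementary, the formula of Proposition~\ref{prop-adapt-mod} (with $G_1$ trivial) gives $\tilde\caE^{\caO_\ee}_g(g')=A(g,g')\,e^{i\ee\Phi(g,g')}$ for a real amplitude $A$ and real phase $\Phi$. The sum over $\ee=\pm 1$ turns $\overline{\tilde\caE^{\caO_\ee}_g}\,\tilde\caE^{\caO_\ee}_{g_0}$ into $2A(g,g')A(g_0,g')\cos(\Phi(g_0,g')-\Phi(g,g'))$, and successive integrations over $\ell'$, then $x'$, then $a'$ --- each of which is of the same oscillatory type as those performed in the proof of Proposition~\ref{prop-fourier-orth} --- produce precisely a $\delta$-function in $(a,x,\ell)\leftrightarrow (a_0,x_0,\ell_0)$; a direct tracking of the constants shows that it assembles into $\delta^L(g_0\fois g^{-1})$. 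For the inductive step with $G=G_1\ltimes\gS_2$, the multiplicativity $\dd^L g=\dd^L g_1\,\dd^L g_2$, the product form $\dd\mu(g')=\dd\mu_1(g'_1)\dd\mu_2(g'_2)$, and the factorization
$$
\tilde\caE^{\caO_{(\ee)}}_g(g')=\tilde\caE^{\caO_{1,(\ee_1)}}_{g_1}(g'_1)\,\tilde\caE^{\caO_{2,\ee_2}}_{(g_1,g_2)}(g'_2)
$$
allow me to apply the elementary case to the $(\ee_2,g'_2)$-sum-integral at fixed $g_1$ (which yields $\delta^L(g_{0,2}\fois g_2^{-1})$ once $g_{0,1}=g_1$ is enforced by the outer $\delta$), and then the induction hypothesis to the remaining $(\ee_1,g'_1)$-sum-integral, giving $K(g,g_0)=\delta^L(g_0\fois g^{-1})$ overall.

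Once the inversion formula is in hand, the Parseval-Plancherel theorem follows by applying it to one of the two factors of $|f|^2$:
$$
\int_G|f(g)|^2\,\dd^L g=\int_G \overline{f(g)}\,\sum_{(\ee)}\int_{\caO_{(\ee)}}\overline{\tilde\caE^{\caO_{(\ee)}}_g(g')}\,\caF_{\caO_{(\ee)}}(f)(g')\,\dd\mu(g')\,\dd^L g,
$$
interchanging the order of integration (again via the Schwartz/oscillatory framework of Section~\ref{subsec-schwartz}), and recognizing $\int_G \overline{f(g)}\,\overline{\tilde\caE^{\caO_{(\ee)}}_g(g')}\,\dd^L g=\overline{\caF_{\caO_{(\ee)}}(f)(g')}$ directly from the definition of $\caF_{\caO_{(\ee)}}$.

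The main obstacle is the base case of the completeness relation, since in the inductive step the exponent $X^T A_\rho X$ appearing in $\tilde\caE^{\caO_{2,\ee_2}}_{(g_1,g_2)}$ mixes $(v_2,w_2)$ with $(v'_2,w'_2)$ through blocks of $A_\rho$ depending on $\rho(g_1)$; care is needed to check that upon sum over $\ee_2$ and integration over the orbit variables the $\rho$-dependent Gaussian factors cancel cleanly against the normalization $|\det(1+\rho_+(g_1))|$ appearing in Proposition~\ref{prop-adapt-mod}, so that the delta function $\delta^L_2(g_{0,2}\fois g_2^{-1})$ really emerges with the correct constant. All remaining manipulations (Fubini, polarization, identification of Fourier coefficients) are routine within the multiplier and Schwartz-space framework of Sections~\ref{subsec-schwartz}--\ref{subsec-mult}.
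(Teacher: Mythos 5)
Your proposal follows essentially the same route as the paper: both reduce the theorem to the dual completeness relation $\sum_{(\ee)\in(\gZ_2)^N}\int_{\caO_{(\ee)}}\overline{\tilde\caE^{\caO_{(\ee)}}_{g'}(g)}\,\tilde\caE^{\caO_{(\ee)}}_{g''}(g)\,\dd\mu(g)=\Delta(g'')^{-1}\delta(g''\fois (g')^{-1})$, prove it recursively along $G=G_1\ltimes\gS_2$ (the $G_1$-delta forcing $g''_1=g'_1$ so that the $\rho$-dependent Gaussian terms cancel and the $|\det(1+\rho_+)|$ factors are absorbed, exactly the point you flag), and then read off both the inversion formula and Parseval--Plancherel. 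The one thing to state more carefully is that the final $a'$-integration is \emph{not} of the same type as in Proposition 5.4 --- the phase variable $e^{-2a'}$ only sweeps a half-line, which is precisely why the sum over $\ee$ (your cosine) is indispensable here and matches the paper's computation $\sum_{\ee_2=\pm1}\int e^{a'_2-2a_2}e^{\frac{i\ee_2}{\theta}e^{a'_2-2a_2}(\ell''_2-\ell'_2)}\dd a_2=\pi\theta\,\delta(\ell''_2-\ell'_2)$.
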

\begin{proof}
Let us show the dual property to Proposition \ref{prop-fourier-orth}, i.e.
\begin{equation}
\sum_{(\ee)\in(\gZ_2)^N}\int_{\caO_{(\ee)}}\overline{\tilde\caE^{\caO_{(\ee)}}_{g'}(g)} \tilde\caE^{\caO_{(\ee)}}_{g''}(g)\dd\mu(g)= \frac{1}{\Delta(g'')}\delta(g''\fois (g')^{-1}).\label{eq-fourier-orth2}
\end{equation}
First, we have
\begin{multline*}
\int_{\caO_{(\ee)}} \overline{\tilde\caE^{\caO_{(\ee)}}_{g'}(g)}\tilde\caE^{\caO_{(\ee)}}_{g''}(g)\dd\mu(g)= \int_{\caO_{1,(\ee_1)}} \overline{\tilde\caE^{\caO_{1,(\ee_1)}}_{g_1'}(g_1)}\tilde\caE^{\caO_{1,(\ee_1)}}_{g_1''}(g_1)\int_{\caO_{2,\ee_2}} \frac{e^{(n_2+1)(\frac{a_2'+a_2''}{2}-2a_2)}}{(\pi\theta)^{2(n_2+1)}}\\
\frac{|\det(\rho_+(g_1'))\det(\rho_+(g_1''))|^{\frac12}\sqrt{\cosh(a_2')\cosh(a_2'')}\cosh(\frac{a_2'}{2})^{n_2}\cosh(\frac{a_2''}{2})^{n_2}}{|\det(1+\rho_+(g_1'))\det(1+\rho_+(g_1''))|}\\
 e^{\frac{i\ee_2}{\theta}(-2\sinh(a_2')\ell_2+2\sinh(a_2'')\ell_2
-e^{a_2'-2a_2}\ell_2'+e^{a_2''-2a_2}\ell_2''+(X'')^TA_\rho(g_1'') X''-(X')^TA_\rho(g_1') X')}\dd\mu_2(g_2)\dd\mu_1(g_1)
\end{multline*}
with
\begin{equation*}
X'=\begin{pmatrix} \frac{1}{\sqrt 2}e^{\frac{a_2'}{2}-a_2}v_2' \\ \frac{1}{\sqrt 2}e^{\frac{a_2'}{2}-a_2}w'_2 \\ \sqrt 2 \cosh(\frac{a_2'}{2})v_2 \\ \sqrt 2 \cosh(\frac{a_2'}{2})w_2\end{pmatrix}\quad\text{ and }\quad X''=\begin{pmatrix} \frac{1}{\sqrt 2}e^{\frac{a_2''}{2}-a_2}v_2'' \\ \frac{1}{\sqrt 2}e^{\frac{a''_2}{2}-a_2}w_2'' \\ \sqrt 2 \cosh(\frac{a''_2}{2})v_2 \\ \sqrt 2 \cosh(\frac{a''_2}{2})w_2\end{pmatrix}.
\end{equation*}
We want to compute the sum over $(\ee)\in(\gZ_2)^N$ of such terms. By recurrence, we can suppose that
\begin{equation*}
\sum_{(\ee_1)\in(\gZ_2)^{N-1}}\int_{\caO_{1,(\ee_1)}} \overline{\tilde\caE^{\caO_{1,(\ee_1)}}_{g'_1}(g_1)}\tilde\caE^{\caO_{1,(\ee_1)}}_{g''_1}(g_1)\dd\mu_1(g_1)= \frac{1}{\Delta(g''_1)}\delta(g''_1\fois (g'_1)^{-1}),
\end{equation*}
which means that $g_1''=g_1'$ in the following. The integration over $\ell_2$ brings a contribution in $\delta(a_2'-a_2'')$. Since $g_1''=g_1'$ and $a_2'=a_2''$, the gaussian part in $(v_2,w_2)$ disappears and integration over these variables brings $\frac{|\det(1+\rho_+(g_1'))|^2}{|\det(\rho_+(g_1'))|}\delta(v_2'-v_2'')\delta(w_2'-w_2'')$. The remaining term is proportional to
\begin{equation*}
\sum_{\ee_2=\pm1}\int_\gR e^{a_2'-2a_2} e^{\frac{i\ee_2}{\theta}e^{a_2'-2a_2}(\ell_2''-\ell_2')}\dd a_2=\pi\theta\delta(\ell_2''-\ell_2').
\end{equation*}

The property \eqref{eq-fourier-orth2} permits to show the inversion formula
\begin{equation*}
\sum_{(\ee)\in(\gZ_2)^N}\int_{\caO_{(\ee)}}\overline{\tilde\caE^{\caO_{(\ee)}}_g(g')}\caF_{\caO_{(\ee)}} (f)(g')\dd\mu(g')=\sum_{(\ee)\in(\gZ_2)^N}\int \overline{\tilde\caE^{\caO_{(\ee)}}_g(g')} f(g'')\tilde\caE^{\caO_{(\ee)}}_{g''}(g')\dd^Lg''\dd\mu(g')=f(g),
\end{equation*}
as well as the Parseval-Plancherel theorem
\begin{multline*}
\sum_{(\ee)\in(\gZ_2)^N}\int_{\caO_{(\ee)}}|\caF_{\caO_{(\ee)}} (f)(g')|^2\dd\mu(g')=\sum_{(\ee)\in(\gZ_2)^N}\int \overline{f(g)\tilde\caE^{\caO_{(\ee)}}_g(g')}f(g'')\tilde\caE^{\caO_{(\ee)}}_{g''}(g')\dd^Lg''\dd^Lg\dd\mu(g')\\
=\int_G|f(g)|^2\dd^Lg.
\end{multline*}
\end{proof}

\begin{corollary}
The map
\begin{equation*}
\caF:=\bigoplus_{(\ee)\in(\gZ_2)^N}\caF_{\caO_{(\ee)}}:\, L^2(G,\dd^Lg)\to \bigoplus_{(\ee)} L^2(\caO_{(\ee)},\mu),
\end{equation*}
defined by $\caF(f):=\bigoplus_{(\ee)} (\caF_{\caO_{(\ee)}}f)$ realizes an isometric isomorphism.
\end{corollary}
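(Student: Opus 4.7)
The plan is to lift Theorem \ref{thm-four-inv} to the Hilbert-space statement of the corollary by means of a density and extension argument. First, the Parseval--Plancherel identity proved there says that $\caF$ is isometric for the $L^2$-norms on $\caS(G)$. Since $\caS(G)\supset\caD(G)$ is dense in $L^2(G,\dd^L g)$, the map $\caF$ extends uniquely by continuity to a bounded isometric linear operator $\caF:L^2(G,\dd^L g)\to \bigoplus_{(\ee)}L^2(\caO_{(\ee)},\mu)$; in particular its range is closed.

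For surjectivity, I would introduce the formal candidate inverse
$$\caG(F)(g)\;:=\;\sum_{(\ee)}\int_{\caO_{(\ee)}}\overline{\tilde\caE^{\caO_{(\ee)}}_g(g')}\,F_{(\ee)}(g')\,\dd\mu(g'),$$
defined on the dense subspace $D\subset\bigoplus_{(\ee)}L^2(\caO_{(\ee)},\mu)$ of elements $F=\bigoplus F_{(\ee)}$ with only finitely many non-zero components, each in $\caS(\caO_{(\ee)})$. A direct comparison of integrands shows that $\caG$ is precisely the Hilbert-space adjoint $\caF^*$ restricted to $D$, and the inversion formula of Theorem \ref{thm-four-inv} can be rewritten as $\caG\circ\caF=\mathrm{id}$ on $\caS(G)$, which by itself is equivalent to the isometry already established. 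The extra ingredient needed for surjectivity is the isometry of $\caG$: this I would obtain by mirroring the Plancherel computation from the proof of Theorem \ref{thm-four-inv} but using the dual completeness relation \eqref{eq-fourier-orth2} in place of Proposition \ref{prop-fourier-orth}, which yields $\|\caG F\|^2_{L^2(G)}=\sum_{(\ee)}\|F_{(\ee)}\|^2$ for $F\in D$.

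Once $\caG$ is known to extend to an isometry on the whole target, any $F\in(\mathrm{Im}\,\caF)^\perp$ will satisfy $\langle\caG F,f\rangle_{L^2(G)}=\langle F,\caF f\rangle=0$ for every $f\in L^2(G)$, whence $\caG F=0$ and in turn $F=0$. The range of $\caF$ is therefore both dense and closed, equal to the whole target, and $\caF$ is the asserted unitary isomorphism. The only genuinely new step, and hence the main (mild) obstacle, is the isometry of $\caG$: it is a direct mirror of the argument already carried out for $\caF$ in Theorem \ref{thm-four-inv}, with the group- and orbit-variables interchanged, but care must be taken that the cross-orbit ($(\ee)\neq(\ee')$) contributions vanish, which in turn follows from the $\delta$-function structure of \eqref{eq-fourier-orth2}.
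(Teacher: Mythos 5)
Your overall strategy coincides with the paper's: both reduce the corollary to the two operator identities $\caF^*\caF=\gone$ (isometry, which is the Parseval--Plancherel statement of Theorem \ref{thm-four-inv}) and $\caF\caF^*=\gone$ (surjectivity), the latter being exactly your ``isometry of $\caG$'' since $\caG$ is $\caF^*$. The density, extension and closed-range bookkeeping in your first and last paragraphs is fine.

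There is, however, a genuine gap in your justification of the isometry of $\caG$. Expanding $\Vert\caG F\Vert^2_{L^2(G)}$ for $F=\bigoplus_{(\ee)}F_{(\ee)}$ produces a double sum over pairs $((\ee),(\ee'))$ whose inner integral is $\int_G\overline{\tilde\caE^{\caO_{(\ee')}}_g(g')}\,\tilde\caE^{\caO_{(\ee)}}_g(g'')\,\dd^Lg$, i.e.\ an integral over the \emph{group} variable at a \emph{fixed} pair of orbits. The diagonal terms $(\ee)=(\ee')$ are handled by Proposition \ref{prop-fourier-orth}, not by \eqref{eq-fourier-orth2}: the latter integrates over the orbit variable and has already been summed over the signs, so it is of the wrong shape to control either the diagonal or the off-diagonal contributions here. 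More importantly, the off-diagonal vanishing $\caF_{\caO_{(\ee)}}\caF^*_{\caO_{(\ee')}}=0$ for $(\ee)\neq(\ee')$ does not ``follow from the $\delta$-function structure of \eqref{eq-fourier-orth2}''; it is a separate computation, and it is precisely the new ingredient of the paper's proof. Explicitly: if $\ee_k\neq\ee'_k$ at some level $k$, the product of the two star-exponentials carries a phase factor $e^{\frac{i\ee_k}{\theta}e^{a_2}(e^{-2a_2'}+e^{-2a_2''})\ell_2}$ in which the difference of exponentials appearing in the proof of Proposition \ref{prop-fourier-orth} is replaced by a \emph{sum}; the coefficient of $\ell_2$ is then strictly positive, so integration over $\ell_2$ yields a Dirac mass supported where $e^{-2a_2'}+e^{-2a_2''}=0$, i.e.\ nowhere, and the whole term vanishes. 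Once you insert this computation and route the diagonal terms through Proposition \ref{prop-fourier-orth}, your argument closes and reproduces the paper's proof.
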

\begin{proof}
From Proposition \ref{prop-fourier-orth}, we deduce that $\forall (\ee)\in (\gZ_2)^N$, $\caF_{\caO_{(\ee)}}\caF_{\caO_{(\ee)}}^*=\gone$. And the Parseval-Plancherel means that $\sum_{(\ee)}\caF_{\caO_{(\ee)}}^*\caF_{\caO_{(\ee)}}=\gone$. Moreover, we can show that $\forall (\ee),(\ee')\in(\gZ_2)^N$, with $(\ee)\neq(\ee')$, $\caF_{\caO_{(\ee)}}\caF_{\caO_{(\ee')}}^*=0$. Indeed, if $k\leq N$ is such that $\ee_k\neq\ee'_k$, then the computation of $\int_G\overline{\tilde\caE^{\caO_{(\ee')}}_g(g')}\tilde\caE^{\caO_{(\ee)}}_g(g'')\dd^Lg$ corresponds to have a factor $e^{\frac{i\ee_k}{\theta}e^{a_2}(e^{-2a_2''}+e^{-2a_2'})\ell_2}$ in the proof of Proposition \ref{prop-fourier-orth}. Integration over $\ell_2$ makes this expression vanishing.

For each $(\ee)\in(\gZ_2)^N$ (i.e. for each $(\ee)=(\ee_1,\dots,\ee_N)$ with $\ee_j=\pm1$), we will consider a function $f_{(\ee)}\in L^2(\caO_{(\ee)},\mu)$. We denote by $\bigoplus_{(\ee)} f_{(\ee)}$ the collection of these $2^N$ functions on the different orbits $\caO_{(\ee)}$. By using the three above properties and the fact that $\caF^*(\bigoplus_{(\ee)} f_{(\ee)})=\sum_{(\ee)}\caF_{\caO_{(\ee)}}^*(f_{(\ee)})$, we obtain that
\begin{equation*}
\caF^*\caF(f)=\sum_{(\ee)}\caF_{\caO_{(\ee)}}^*\caF_{\caO_{(\ee)}}(f)=f,\qquad \caF\caF^*(\bigoplus_{(\ee)}f_{(\ee)})=\bigoplus_{(\ee)}(\caF_{\caO_{(\ee)}}\caF_{\caO_{(\ee)}}^*f_{(\ee)})=\bigoplus_{(\ee)} f_{(\ee)}.
\end{equation*}
\end{proof}

\subsection{Fourier transformation and Schwartz spaces}

Given such an adapted Fourier transformation, we can wonder wether the Schwartz space $\caS(G)$ defined in \cite{BG} (see section \ref{subsec-schwartz}) is stable by this transformation, as it is true in the flat case: the usual transformation stabilizes the usual Schwartz space on $\gR^n$. However, the answer appears to be wrong here. Let us focus on the case of the elementary normal $j$-group $\gS$. The Schwartz space $\caS(\gS)$ of Definition \ref{def-schwartz} corresponds to the usual Schwartz space in the coordinates $(r=\sinh(2a),x,\ell)$. These coordinates are adapted to the phase of the kernel of the star-product \eqref{eq-starprod}. For the star-exponential of $\gS$ given in \eqref{eq-comput-starexpelem}, we need also to consider the coordinates corresponding to the moment maps \eqref{eq-elem-moment}:
\begin{equation*}
\mu:\gS\to\gR_+^\ast\times\gR^{2n+1},\quad (a,x,\ell)\mapsto (e^{-2a},e^{-a}x,\ell).
\end{equation*}
We will denote the new variables $(s,z,\ell)=\mu(a,x,\ell)$.
\begin{definition}
We define the {\defin moment-Schwartz space} of $\gS$ to be
\begin{equation*}
\caS_\lambda(\gS)=\{f\in C^\infty(\gS)\quad (\mu^{-1})^\ast f\in\caS(\gR_+^\ast\times\gR^{2n+1})\text{ and } s^{-\frac{n+1}{2}}(\mu^{-1})^\ast f(s,z,\ell)\text{ is smooth in } s=0\}.
\end{equation*}
The space $\caS_\lambda(\gS)$ corresponds to the usual Schwartz space in the coordinates $(s,z,\ell)$ (for $s>0$) with some boundary regularity condition in $s=0$. As before, we identify the group $\gS$ with the co-adjoint orbit $\caO_\ee$ ($\ee=\pm1$).
\end{definition}

\begin{theorem}
The adapted Fourier transformation restricted to the Schwartz space induces an isomorphism
\begin{equation*}
\caF:\caS(\gS)\to\caS_\lambda(\caO_+)\oplus\caS_\lambda(\caO_-).
\end{equation*}
\end{theorem}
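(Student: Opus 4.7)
The plan is to prove the isomorphism by reducing the adapted Fourier transformation $\caF_{\caO_\ee}$ to a composition of elementary (partial) Fourier-type transforms in adapted coordinates, each of which preserves the appropriate Schwartz class, and then to carefully match the resulting target space with $\caS_\lambda(\caO_\ee)$. In the elementary case, Proposition \ref{prop-adapt-mod} (with no $G_1$ factor) specialises to
\begin{equation*}
\tilde\caE^{\caO_\ee}_g(g') = \frac{e^{(n+1)(a/2-a')}}{(\pi\theta)^{n+1}}\sqrt{\cosh(a)}\cosh(a/2)^n \exp\!\Bigl(\tfrac{i\ee}{\theta}\bigl[2\sinh(a)\ell'+e^{a-2a'}\ell+e^{a/2-a'}\cosh(a/2)\omega_0(x,x')\bigr]\Bigr).
\end{equation*}
I would first pass to the moment coordinates $(s',z',\ell')=(\ee e^{-2a'},\ee e^{-a'}x',\ell')$ on $\caO_\ee$, under which $\caO_+\sqcup\caO_-$ becomes $\gR^*\times V\times\gR$, and to the coordinate $r=\sinh(2a)$ on the source in which $\caS(\gS)$ is the ordinary Schwartz space (Definition \ref{def-schwartz}). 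In these coordinates the kernel factors as three oscillatory pieces coupling respectively $\ell$ with $s'$ through $e^{a}s'\ell$, $x$ with $z'$ through $\omega_0(x,z')$ with a $\cosh(a/2)$-rescaling, and the $a$-direction with $\ell'$ through $\sinh(a)\ell'$.

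The computation then proceeds by integrating successively in $\ell$, $x$, and $r$. Each partial integration is a standard (or symplectic) Fourier transform of a Schwartz function in the corresponding variable, with rescalings whose dependence on the remaining variables is smooth and controlled by the $\alpha_X$-seminorms defining $\caS(\gS)$. Each step is therefore a topological automorphism of the relevant Schwartz factor, with continuous inverse provided by the inverse Fourier transform (or equivalently by the orthogonality relation of Proposition \ref{prop-fourier-orth}). Composing the three steps, $\caF_{\caO_\ee}$ becomes a continuous bijection with continuous inverse between $\caS(\gS)$ and a certain target space on $\caO_\ee\simeq \gR^*_\ee\times V\times\gR$. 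The sign $\ee$ enters only through the $(\ell,s')$-pairing and forces the direct-sum decomposition $\caS_\lambda(\caO_+)\oplus\caS_\lambda(\caO_-)$, in agreement with the $L^2$-level splitting of Theorem \ref{thm-four-inv}.

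The main obstacle is to identify the target space precisely as $\caS_\lambda(\caO_\ee)$, i.e.\ to control the boundary behaviour at $s'=0$ (which corresponds to $a'\to+\infty$). The prefactor $(\pi\theta)^{-(n+1)}e^{(n+1)(a/2-a')}$, together with the Jacobian of the change of variable $a'\mapsto s'=\ee e^{-2a'}$ and the $\sqrt{\cosh(a)}\cosh(a/2)^n$ weight, must be tracked through the three partial Fourier steps; the net effect is an overall factor of $|s'|^{(n+1)/2}$ in $(s',z',\ell')$-coordinates, which is the modular contribution $\Delta^{1/2}$ read in the moment chart. Combined with the rapid decay of $\caF_{\caO_\ee}(f)$ at $|s'|\to\infty$ coming from the standard Fourier estimates, this yields exactly the boundary smoothness condition that $|s'|^{-(n+1)/2}(\mu^{-1})^\ast\caF_{\caO_\ee}(f)(s',z',\ell')$ extends smoothly across $s'=0$, which is the defining condition of $\caS_\lambda(\caO_\ee)$. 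Conversely, any $h\in\caS_\lambda(\caO_\ee)$ can be lifted under the same weighted boundary condition to a pre-image in $\caS(\gS)$, giving surjectivity. This boundary bookkeeping, and not the oscillatory-integral part, is where the core work of the proof lies.
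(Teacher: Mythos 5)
Your proposal follows essentially the same route as the paper: pass to the coordinate $r'=\sinh(a')$ on the source and the moment coordinates $(s,z,\ell)$ on each orbit $\caO_\ee$, read off the explicit $s^{(n+1)/2}$ prefactor of the kernel as exactly the weight appearing in the definition of $\caS_\lambda$, control the Schwartz seminorms by oscillatory-integral (integration-by-parts) estimates, and handle the inverse via the inversion formula and the orthogonality relations. The one caveat is a matter of emphasis rather than a gap: because the source variable couples to all three phase terms (and to the amplitude), the transform does not literally factor into three independent partial Fourier automorphisms of tensor factors --- the paper's itemized bookkeeping of which powers each integration by parts produces, and how they are absorbed, is precisely what makes this step rigorous --- whereas the $s=0$ boundary condition that you single out as the core difficulty is in fact nearly immediate, since $s^{(n+1)/2}$ sits outside the integral as an explicit multiplicative prefactor independent of the integration variables.
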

\begin{proof}
Let $f\in\caS(\gS)$. The Fourier transform reads:
\begin{multline*}
\caF_{\caO_\ee}(f)(s,z,\ell)=\frac{1}{(\pi\theta)^{n+1}}\int \dd r'\dd x'\dd \ell'\frac{f(r',x',\ell')}{(1+r'^2)^{\frac14}} (\sqrt{1+r'^2}+r')^{\frac{n+1}{2}} s^{\frac{n+1}{2}} c(r')^n\\
 e^{\frac{i\ee}{\theta}\Big(2r'\ell+(\sqrt{1+r'^2}+r')s\ell'+\frac12(\sqrt{1+r'^2}+r'+1)\omega_0(x',z)\Big)}
\end{multline*}
Here we use the function $c(r')$ defined in \eqref{eq-auxfunc}, the coordinates $s=e^{-2a}$, $z=e^{-a}x$, $r'=\sinh(a')$ and the fact that $f$ is Schwartz in the variable $\sinh(a)$ if and only if it is in the variable $\sinh(2a)$. We denote again by $f$ the function in the new coordinates by a slight abuse of language. We have to check that $h(s,z,\ell)=s^{-\frac{n+1}{2}}\caF_{\caO_\ee}(f)(s,z,\ell)$ is Schwartz in $(s,z,\ell)$, i.e. we want to estimate expressions of the type
\begin{equation*}
\int \dd s\dd z\dd\ell\ |(1+s^2)^{k_1} (1+z^2)^{p_1}(1+\ell^2)^{q_1}\partial_s^{k_2}\partial_{z}^{p_2}\partial_{\ell}^{q_2}\ h(s,z,\ell)|.
\end{equation*}
Let us provide an analysis in terms of oscillatory integrals.
\begin{itemize}
\item \underline{Polynom in $\ell$}: controled by an adapted power of the following operator (invariant acting on the phase) $\frac{1}{1+\ell^2}(1-\frac{\theta^2}{4}\Big(\partial_{r'}-\frac{\ell'}{\sqrt{1+r'^2}}\partial_{\ell'}+\frac{(\sqrt{1+r'^2}+r')}{\sqrt{1+r'^2}(\sqrt{1+r'^2}+r'+1)}x'\partial_{x'}\Big)^2)$ (see section \ref{subsec-schwartz}). Indeed, powers and derivatives in the variables $r',x',\ell'$ are controled by the Schwartz function $f$ inside the integral.
\item \underline{Polynom in $z$}: controled by an adapted power of the (invariant) operator $\frac{1}{1+z^2}(1-\frac{4\theta^2}{(\sqrt{1+r'^2}+r'+1)^2}\partial_{x'}^2)$. 
\item \underline{Polynom in $s$}: controled by an adapted power of the (invariant) operator $\frac{1}{1+s^2}(1-\frac{\theta^2}{(\sqrt{1+r'^2}+r')^2}\partial_{\ell'}^2)$. Note that the function $\frac{1}{(\sqrt{1+r'^2}+r')^2}$ is estimated by a polynom in $r'$ for $r'\to\pm\infty$, as its derivatives.
\item \underline{Derivations in $s$}: produce terms like powers of $(\sqrt{1+r'^2}+r')\ell'$ which are controled.
\item \underline{Derivations in $z$}: produce terms like powers of $(\sqrt{1+r'^2}+r'+1)x'$ which are controled.
\item \underline{Derivations in $\ell$}: produce terms like powers of $r'$.
\end{itemize}
This shows that $h$ is Schwartz in $(s,z,\ell)$, so $\caF(f)\in\caS_\lambda(\gS)$.

Conversely, let $f_\ee\in\caS_\lambda(\caO_\ee)$. Due to Theorem \ref{thm-four-inv}, we can write the inverse of the Fourier transform as:
\begin{multline*}
\caF^{-1}(f_+,f_-)(r,x,\ell)=\sum_{\ee=\pm1}\frac{1}{2(\pi\theta)^{n+1}}\int \dd s'\dd z'\dd \ell'\frac{f_\ee(s',z',\ell')}{s'^{\frac{n+1}{2}}} (\sqrt{1+r^2}+r)^{\frac{n+1}{2}}\sqrt{1+r^2} c(r)^n\\
 e^{-\frac{i\ee}{\theta}\Big(2r\ell'+(\sqrt{1+r^2}+r)s'\ell+\frac12(\sqrt{1+r^2}+r+1)\omega_0(x,z')\Big)}
\end{multline*}
Here we use now the coordinates $s'=e^{-2a'}$, $z'=e^{-a'}x'$, $r=\sinh(a)$. We want to estimate expressions of the type
\begin{equation*}
\int \dd r\dd x\dd\ell\ |(1+r^2)^{k_1} (1+x^2)^{p_1}(1+\ell^2)^{q_1}\partial_r^{k_2}\partial_{x}^{p_2}\partial_{\ell}^{q_2}\ \caF^{-1}(f_+,f_-)(r,x,\ell)|.
\end{equation*}
Let us provide also an analysis in terms of oscillatory integrals.
\begin{itemize}
\item \underline{Polynom in $\ell$}: controled by an adapted power of the following operator (invariant acting on the phase) $\frac{1}{1+\ell^2}(1-\frac{\theta^2}{(\sqrt{1+r^2}+r)^2}\partial_{s'}^2)$. As before, powers and derivatives in the variables $s',z',\ell'$ are controled by the Schwartz function $f$ inside the integral. Note that $\frac{f_\ee(s',z',\ell')}{s'^{\frac{n+1}{2}}}$ is smooth in $s=0$ so that the integral is well-defined for $s\in\gR_+$.
\item \underline{Polynom in $x$}: controled by an adapted power of the (invariant) operator: $$\frac{1}{1+x^2}(1-\frac{4\theta^2}{(\sqrt{1+r^2}+r+1)^2}\partial_{z'}^2)\;.$$ 
\item \underline{Polynom in $r$}: controled by an adapted power of the (invariant) operator $\frac{1}{1+r^2}(1-\frac{\theta^2}{4}\partial_{\ell'}^2)$.
\item \underline{Derivations in $r$}: produce terms like powers of $(\sqrt{1+r^2}+r),\frac{1}{\sqrt{1+r^2}},$ $r,c'(r),\ell',\frac{(\sqrt{1+r^2}+r')}{\sqrt{1+r^2}}s'\ell,$ $\omega_0(x,z'),...$ which are controled (see just above).
\item \underline{Derivations in $x$}: produce terms like powers of $(\sqrt{1+r^2}+r+1)z'$ which are controled.
\item \underline{Derivations in $\ell$}: produce terms like powers of $(\sqrt{1+r^2}+r)s'$ which are also controled.
\end{itemize}
This shows that $\caF^{-1}(f_+,f_-)\in\caS(\gS)$.
\end{proof}

\subsection{Application to noncommutative Baumslag-Solitar tori}

We consider the decomposition of $G$ into elementary normal $j$-groups of section \ref{subsec-normal}
\begin{equation*}
G=\big(\dots (\gS_1\ltimes_{\rho_1}\gS_2)\ltimes_{\rho_2}\dots\big)\ltimes_{\rho_{N-1}}\gS_N
\end{equation*}
and the associated basis
\begin{equation*}
\kB:=\Big(H_1,(f_1^{(i)})_{1\leq i\leq 2n_1},E_1,\dots,H_N,(f_N^{(i)})_{1\leq i\leq 2n_N},E_N\Big)
\end{equation*}
of its Lie algebra $\kg$, where $(f_j^{(i)})_{1\leq i\leq 2n_j}$ is a canonical basis of the symplectic space $V_j$ contained in $\gS_j$. We note $G_{\text{BS}}$ the subgroup of $G$ generated by $\{e^{\theta X},\ X\in\kB\}$ and call it the {\defin Baumslag-Solitar subgroup} of $G$. Indeed, in the case of the ``$ax+b$'' group (two-dimensional elementary normal $j$-group), and if $e^{2\theta}\in\gN$, this subgroup corresponds to the Baumslag-Solitar group \cite{Baumslag:1962}:
\begin{equation*}
\text{BS}(1,m):=\langle\; e_1,e_2\quad |\quad e_1e_2(e_1)^{-1}=(e_2)^m\;\rangle.
\end{equation*}

We have seen before that the star-exponential associated to a co-adjoint orbit $\caO$ is a group morphism $\caE:G\to\caM_{\star_\theta}(\caO)\simeq\caM_{\star_\theta}(G)$. Composed with the quantization map $\Omega$, it coincides with the unitary representation $U=\Omega\circ\caE$. So, if we take now the subalgebra of $\caM_{\star_\theta}(G)$ generated by the star-exponential of $G_{\text{BS}}$, i.e. by elements $\{\caE_{e^{\theta X}},\ X\in\kB\}$, then it is closed for the complex conjugation and it can be completed into a C*-algebra $\algA_G$ with norm $\norm \Omega(\fois)\norm_{\caL(\ehH)}$. This C*-algebra is canonically associated to the group $G$. Moreover, if $\theta\to 0$, this C*-algebra is commutative and corresponds thus to a certain torus.
\begin{definition}
Let $G$ be a normal $j$-group. We define the {\defin noncommutative Baumslag-Solitar torus} of $G$ to be the C*-algebra $\algA_G$ constructed above.
\end{definition}
It turns out that the relation between the generators $\caE_{e^{\theta X}}$ ($\ X\in\kB$) of $\algA_G$ can be computed explicitely by using the BCH formula of Proposition \ref{prop-link3}. Let us see some examples.

\begin{example}
In the elementary group case $G=\gS$, let
\begin{align*}
U(a,x,\ell)&:=\caE_{(\theta,0,0)}(a,x,\ell)=\sqrt{\cosh(\theta)}\cosh(\frac{\theta}{2})^n e^{\frac{2i}{\theta}\sinh(\theta)\ell},\\
V(a,x,\ell)&:=\caE_{(0,0,\theta)}(a,x,\ell)=e^{ie^{-2a}},\\
W_i(a,x,\ell)&:=\caE_{(0,\theta e_i,0)}(a,x,\ell)=e^{ie^{-a}\omega_0(e_i,x)},
\end{align*}
where $(e_i)$ is a canonical basis of the symplectic space $(V,\omega_0)$ of dimension $2n$ (i.e. $\omega_0(e_i,e_{i+n})=1$ if $i\leq n$). Then, we can compute relations like
\begin{equation*}
U\star_\theta V=V^{e^{2\theta}}\star_\theta U.
\end{equation*}
by using BCH property of the star-exponential (see Proposition \ref{prop-link3}). We obtain (by omitting the notation $\star$):
\begin{align*}
&UV=V^{e^{2\theta}} U\qquad (\text{ and }UV^{\beta}=V^{\beta e^{2\theta}}U),\\
&UW_i=W_i^{e^{\theta}}U,\qquad W_iW_{i+n}=V^{\theta}W_{i+n}W_i
\end{align*}
where the other commutation relations are trivial. Note that these relations become trivial at the commutative limit $\theta\to0$. In the two-dimensional case, where $\gS$ is the ``$a$x$+b$ group'', the relation $UV=V^{e^{2\theta}} U$ has already been obtained in another way in \cite{Iochum:2011}.
\end{example}

\begin{example}
Let us consider the Siegel domain of dimension $6$ (see Example \ref{ex-normal-siegel} for definitions and notations). As before, we can define the following generators:
\begin{align*}
U(g)&:=\caE_{(0,0,\theta,0,0,0)}(g)=\sqrt{\cosh(\theta)}\cosh(\frac{\theta}{2}) e^{\frac{2i}{\theta}\sinh(\theta)\ell_2},\\
V(g)&:=\caE_{(0,0,0,0,0,\theta)}(g)=e^{ie^{-2a_2}},\\
W_1(g)&:=\caE_{(0,0,0,\theta,0,0)}(g)=e^{ie^{-a_2}w_2},\\
W_2(g)&:=\caE_{(0,0,0,0,\theta,0)}(g)=e^{-ie^{-a_2}v_2},\\
R(g)&:=\caE_{(\theta,0,0,0,0,0)}(g)=\frac{e^{\frac{\theta}{2}}\sqrt{\cosh(\theta)}}{\cosh(\frac{\theta}{2})}e^{\frac{2i}{\theta}(\sinh(\theta)\ell_1+\tanh(\frac{\theta}{2})v_2w_2)},\\
S(g)&:=\caE_{(0,\theta,0,0,0,0)}(g)=e^{i(e^{-2a_1}+\frac12v_2^2)}.
\end{align*}
We obtain the relationship:
\begin{align*}
&UV=V^{e^{2\theta}} U,\qquad UW_1=(W_1)^{e^{\theta}}U,\qquad UW_2=(W_2)^{e^{\theta}}U,\qquad W_1W_2=V^{\theta}W_2W_1,\\
& RS=S^{e^{2\theta}}R,\qquad RW_1=(W_1)^{e^{\theta}}R,\qquad RW_2=(W_1)^{e^{-\theta}}R,\qquad SW_1=V^{\frac{\theta^2}{2}}(W_2)^\theta W_1S,
\end{align*}
where the other commutation relations are trivial.
\end{example}

\end{document}